\newcommand{\multiline}[1]{%
  \begin{tabularx}{\dimexpr\linewidth-\ALG@thistlm}[t]{@{}X@{}}
    #1
  \end{tabularx}
}
\newcommand{\Lom}[1]{\mathrm{L}^{#1}(\Omega)}
\newcommand{\norm}[1]{\left\|#1\right\|}
\newcommand{\abs}[1]{\left|#1\right|}
\newcommand{\operator}[1]{\mathsf{#1}}
\newcommand{\B}{\operator{B}}
\newcommand{\E}{\operator{E}}
\newcommand{\F}{\mathfrak{F}}
\renewcommand{\L}{\mathrm{L}} 
\renewcommand{\H}{\mathrm{H}}
\newcommand{\W}{\mathrm{W}}
\newcommand{\VV}{\spc{V}}
\newcommand{\WW}{\spc{W}}
\newcommand{\R}{\operator{R}}
\newcommand{\T}{\operator{T}_{\Delta t}}
\newcommand{\G}{\Lambda_f}
\renewcommand{\d}{\operator{d}}
\newcommand{\ds}{\,\d s}
\newcommand{\dx}{\,\d\x}
\newcommand{\NN}[1]{\left|\!\left|\!\left|#1\right|\!\right|\!\right|}
\newcommand{\dprod}[2]{\left<#1,#2\right>}
\newcommand{\x}{\bm{\mathsf{x}}}
\renewcommand{\l}{\ell}
\newcommand{\spc}[1]{\mathbb{#1}}
\newcommand{\CC}{\rho}
\renewcommand{\u}[1]{\bm{\mathfrak{u}}_{N_{#1}}}
\newcommand{\up}[1]{\bm{\mathfrak{u}}_{N_{#1}+1}}
\newcommand{\um}{\bm{\mathfrak{u}}_{N-1}}
\newcommand{\dof}{\mathtt{dof}}
\newcommand{\errH}{\mathtt{err}_{N}}
\newcommand{\CP}{C_{\mathrm{P}}}
\renewcommand{\P}{\omega}
\newcommand{\Pref}{\widetilde{\P}}
\newcommand{\Vh}{\widehat{\mathbb{V}}}
\DeclareMathOperator{\Span}{span}
\DeclareMathOperator*{\esssup}{ess\,sup}
\DeclareMathOperator{\Ei}{Ei}
\newtheorem{theorem}{Theorem}[section]
\newtheorem{lemma}[theorem]{Lemma}
\newtheorem{proposition}[theorem]{Proposition} 
\newtheorem{cor}[theorem]{Corollary}
\newtheorem{assumption}[theorem]{Assumption}
\theoremstyle{definition}
\newtheorem{remark}[theorem]{Remark}
\title[Adaptive energy minimisation for semilinear PDE]{A numerical energy minimisation approach for semilinear diffusion-reaction boundary value problems based on steady state iterations}
\author[M.~Amrein]{Mario Amrein}\address{Institute for Risk and Insurance, Applied University of Zurich, Technoparkstrasse 2, CH-8400 Winterthur}
\author[P.~Heid]{Pascal Heid}\address{Mathematical Institute, University of Oxford, Woodstock Road, Oxford OX2
6GG, UK}
\author[T.~P.~Wihler]{Thomas P.~Wihler}\address{Mathematics Institute, University of Bern, Sidlerstr. 5, CH-3012 Bern,
Switzerland}
\email{mario.amrein@zhaw.ch \and pascal.heid@maths.ox.ac.uk \and wihler@math.unibe.ch}
\thanks{The authors acknowledge the financial support of the Swiss National Science Foundation (SNF),
Grant No. 200021\underline{\phantom{x}}182524, and Project No. P2BEP2\underline{\phantom{x}}191760}
\begin{document}

\begin{abstract}
We present a novel energy-based numerical analysis of semilinear diffusion-reaction boundary value problems. Based on a suitable variational setting, the proposed computational scheme can be seen as an energy minimisation approach. More specifically, this procedure aims to generate a sequence of numerical approximations, which results from the iterative solution of related (stabilised) linearised discrete problems, and tends to a local minimum of the underlying energy functional. Simultaneously, the finite-dimensional approximation spaces are adaptively refined; this is implemented in terms of a new mesh refinement strategy in the context of finite element discretisations, which again relies on the energy structure of the problem under consideration, and does not involve any a posteriori error indicators. In combination, the resulting adaptive algorithm consists of an iterative linearisation procedure on a sequence of hierarchically refined discrete spaces, which we prove to converge towards a solution of the continuous problem in an appropriate sense. Numerical experiments demonstrate the robustness and reliability of our approach for a series of examples.
\end{abstract}

\keywords{Semilinear elliptic PDE, steady states, fixed point iterations, energy minimisation, iterative Galerkin procedures, adaptive finite element methods}

\subjclass[2010]{35A15, 35B38, 65J15, 47J05, 65M25, 65M50}

\maketitle

\section{Introduction}
We develop and analyse a new iterative linearised finite element discretisation approach for semilinear elliptic diffusion-reaction equations. Specifically, on an open and bounded polytopal domain $\Omega\subset\mathbb{R}^d$, $d\in\{1,2,3\}$, with boundary $\partial \Omega$ consisting of straight faces, and for a (possibly nonlinear) reaction term $f:\,\Omega\times\mathbb{R}\to\mathbb{R}$, we aim to numerically approximate solutions~$u:\,\Omega \to\mathbb{R}$ of the boundary value model problem
\begin{equation}\label{eq:poisson}
\begin{aligned}
\Delta u(\x) +f(\x,u(\x))&=0 \quad &&\x\in \Omega,\\
u(\x)&=0 \quad &&\x\in \partial \Omega. 
\end{aligned}
\end{equation}
We pursue a novel energy-based avenue that exploits the variational structure of~\eqref{eq:poisson} in both analytical and numerical aspects. More precisely, our approach consists of three key parts, which will be outlined briefly in the sequel.

Firstly, in order to provide a suitable theoretical framework, we devise a new energy analysis for the boundary value problem~\eqref{eq:poisson}, which \emph{does not} require any monotonicity or convexity properties on the nonlinear reaction term~$f$. We assume that $f$ features asymptotically linear growth in the second argument (as $|u|\to\infty$), and, thereby, gives rise to a number of relevant applications: We mention, for instance, the sine-Gordon model, where $f(u)\sim -\sin(u)$, which originated from 19th century surface geometry and was rediscovered in various areas of modern physics, see, e.g., \cite{cuevas2014sine}; another example is the Arrhenius type production term, $f(u)\sim (1-|u|)\exp(-\nicefrac{c}{|u|})$, with $c>0$, which appears in chemical diffusion-reaction models (including combustion), see, e.g., \cite{cencini2003reaction,kapila1980reactive}. 

The second building block is a discrete time stepping scheme that is based on viewing solutions of~\eqref{eq:poisson} as steady-state approximations (for $t\to\infty$) of the semilinear parabolic evolution problem
\begin{equation}
\label{eq:02}
\begin{aligned}
\partial_{t} v(\x,t) &=\Delta v(\x,t)+f(\x,v(\x,t)) \quad &&(\x,t)\in\Omega \times (0,\infty),\\
v(\x,t)&=0 \quad &&(\x, t)\in\partial \Omega \times (0,\infty),\\
v(\x,0)&=v_{0}(\x) \quad &&\x\in \Omega,
\end{aligned}
\end{equation}
for a suitable initial guess $v_{0}:\Omega \to \mathbb{R}$ (with zero boundary values). 
%
An unpretentious way to discretise~\eqref{eq:02} with respect to time is the forward Euler scheme (with a time step $\Delta t>0$). It yields an iteratively generated sequence $\{u^{n}\}_{n}$ that is obtained by solving the \emph{linear} elliptic problem
\begin{equation}
\label{eq:03}
\begin{aligned}
\frac{1}{\Delta t} u^{n+1}(\x)-\Delta u^{n+1}(\x) &=\frac{1}{\Delta t}u^n(\x)+f(\x,u^n(\x)) \quad &&(\x,t)\in\Omega \times (0,\infty),\\
u^{n+1}(\x)&=0 \quad &&(\x, t)\in\partial \Omega \times (0,\infty),
\end{aligned}
\end{equation}
for each $n\ge 0$; note that this procedure could also be seen as a (low-order) \emph{stabilised linear} fixed-point iteration for the nonlinear problem~\eqref{eq:poisson}, viz.
\[
\begin{aligned}
\left(-\Delta+\gamma\,\mathsf{id}\right)u^{n+1}&=f(\cdot,u^{n})+\gamma u^n \quad &&\text{in } \Omega,\\
u^{n+1}&=0 \quad &&\text{on } \partial \Omega, 
\end{aligned}
\]
for $n\ge 0$, where $\gamma>0$ take the role of a stability parameter. Under certain conditions, convergence (in a suitable sense) to a solution of the nonlinear equation~\eqref{eq:poisson} can be established. This observation can be exploited for both theoretical as well as for practical purposes. Indeed, we refer, for instance, to the monotone method of sub- and supersolutions in the theory of semilinear partial differential equations, see, e.g.~\cite[\S 9.3]{evans:98}; we also point to related discrete versions in terms of finite differences, cf.~\cite{McKenna:86, Pao:2003}, where the relevant monotonicity properties carry over to the finite-dimensional framework. In either of these approaches, the availability of a suitable (continuous resp.~discrete) maximum principle is crucial. As a consequence, whenever numerical approximation methods which are not defined in a point-wise manner (such as, e.g., finite element or spectral discretisations) are employed, then the monotonicity approach cannot be applied in an obvious way. In the present paper, we circumvent this issue by making use of the underlying variational framework associated to~\eqref{eq:poisson}, and prove that the sequence $\{u^n\}_n$ resulting from~\eqref{eq:03} features some favourable energy properties; these, in turn, allow to establish an alternative convergence analysis. We remark in passing that, instead of using the forward Euler method (or another explicit time marching scheme) for the approximation of steady-state solutions to~\eqref{eq:02}, the backward Euler method could also be of interest in light of its unconditional stability. Evidently, this approach requires the application of a suitable nonlinear solver in each discrete temporal step. For instance, combining the backward Euler discretisation with the Newton iteration scheme, the so-called pseudo-transient-continuation (PTC) method presented in \cite{Deuflhard:04} emerges; we also refer to \cite{AmreinWihler:17PTC}, where the PTC approach was investigated in the specific context of semilinear singularly perturbed problems.

The third component of the proposed numerical procedure in this work concerns the application of an efficient adaptive finite element mesh refinement strategy. We emphasise that this aspect is of particular importance in the context of semilinear equations~\eqref{eq:poisson} as solutions may exhibit local singular effects including boundary layers, interior shocks, or (multiple) spikes. Again, we will resort to the variational structure in order to use a new energy-driven adaptive finite element mesh refinement technique that has been proposed recently in the context of the (semilinear) Gross-Pitaevskii eigenvalue equation~\cite{HeidStammWihler:19}. In contrast to traditional approaches, we point out that this methodology \emph{does not} require any a posteriori error indicators to drive the adaptive process. 

The steady-state iteration~\eqref{eq:03} is performed on a sequence of hierarchically enriched finite element spaces, which, in turn, are obtained from an energy-based adaptive mesh refinement procedure as mentioned above. In order to realise these ideas within an efficient computational algorithm, they will be effectively combined in terms of a simultaneous interplay. Roughly speaking, we reinitiate the iteration~\eqref{eq:03} on a locally enriched discrete space as soon as the potential energy change in the approximate solution becomes comparable to the iteration error on the current space. Under a natural assumption on the adaptive meshes in each refinement step, we prove that the proposed algorithm converges (in an appropriate sense) to a weak solution of~\eqref{eq:poisson}. We remark that our approach, i.e. the intertwined application of linear iterations and adaptive discretisation methods, is closely related to the recent developments on the (adaptive) iterative linearised Galerkin (ILG) methodology~\cite{HeidWihler:22,HeidWihler:20,HeidWihler:19v2,HeidWihler2:19v1,HeidPraetoriusWihler:2021,CongreveWihler:17,AmreinWihler:15,HoustonWihler:18}; we also refer to the seminal works~\cite{ErnVohralik:13,El-AlaouiErnVohralik:11,BernardiDakroubMansourSayah:15,GarauMorinZuppa:11}. 

\subsection*{Outline}
In \S\ref{sec:probform} we introduce the weak formulation of~\eqref{eq:poisson}, and develop some crucial energy properties. Moreover, in \S\ref{sec:energy_minimisation} the linearised iteration~\eqref{eq:03} is analysed in a Hilbert (sub-)space setting, and some energy-related convergence results will be established.
In \S\ref{sec:adaptiveIP}, we present the main algorithm in general form, which involves an effective interplay of the iterative linearisation scheme~\eqref{eq:03} and adaptive discretisations thereof in terms of arbitrary Galerkin spaces; in addition, we provide a new convergence analysis. Subsequently, we discuss the energy-driven adaptive mesh refinement procedure in the specific context of the finite element method, and perform some numerical experiments. Finally, we summarize our work in \S\ref{sec:concl}.

\section{Variational framework} \label{sec:probform}

\subsection{Function spaces and norms} For the purpose of this paper, we define the space~$\VV:=\H_{0}^{1}(\Omega)$, the standard Sobolev subspace of functions in $\H^1(\Omega)=\W^{1,2}(\Omega)$ with zero trace on~$\partial\Omega$. The space~$\VV$ is equipped with a parametrised class of norms~$\NN{\cdot}_{\lambda}$, where, for $\lambda > 0$, we define
\begin{align} \label{eq:norm}
\NN{v}_{\lambda}:=\Bigl(\lambda\norm{\nabla v}_{\Lom{2}}^2 +\norm{v}_{\Lom{2}}^2 \Bigr)^{\nicefrac{1}{2}},\qquad v\in \VV.
\end{align}
Here, $\|\cdot\|_{\Lom{2}}$ denotes the $\L^2$-norm on~$\Omega$. Observing the Poincar\'e inequality,
\begin{equation}\label{eq:PI0}
\norm{v}^2_{\Lom{2}}\le\CP\norm{\nabla v}^2_{\Lom{2}}\qquad\forall v\in\VV,
\end{equation}
with a constant $\CP>0$ only depending on $\Omega$, we devise the following result for the norm $\NN{\cdot}_{\lambda}$.

\begin{lemma}[Poincar\'e inequality]\label{lem:P}
For any $v\in\VV$ and $\lambda >0$ it holds the bound
\begin{subequations}\label{eq:PI}
\begin{align}
\norm{v}_{\Lom{2}}&\le\beta(\lambda)^{\nicefrac12}\NN{v}_\lambda,
\intertext{with}
\beta(\lambda):&=\frac{\CP}{\CP+\lambda},\label{eq:beta}
\end{align}
\end{subequations}
where $\CP$ is the Poincar\'e constant from \eqref{eq:PI0}.
\end{lemma}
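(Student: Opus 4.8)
The plan is to reduce the claim to the Poincar\'e inequality~\eqref{eq:PI0} by a purely algebraic manipulation. Since all quantities involved are nonnegative, I would first square the asserted bound~\eqref{eq:PI}, so that it suffices to establish
\[
\norm{v}_{\Lom{2}}^2 \le \beta(\lambda)\,\NN{v}_\lambda^2
= \frac{\CP}{\CP+\lambda}\Bigl(\lambda\norm{\nabla v}_{\Lom{2}}^2 + \norm{v}_{\Lom{2}}^2\Bigr),
\]
where I have inserted the definitions~\eqref{eq:norm} of the norm and~\eqref{eq:beta} of $\beta(\lambda)$.

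Next I would clear the denominator by multiplying through by $\CP+\lambda>0$, which preserves the inequality. This yields the equivalent statement
\[
(\CP+\lambda)\norm{v}_{\Lom{2}}^2 \le \CP\lambda\norm{\nabla v}_{\Lom{2}}^2 + \CP\norm{v}_{\Lom{2}}^2.
\]
Subtracting the term $\CP\norm{v}_{\Lom{2}}^2$ from both sides cancels it against part of the left-hand side, leaving $\lambda\norm{v}_{\Lom{2}}^2 \le \CP\lambda\norm{\nabla v}_{\Lom{2}}^2$. Finally, dividing by $\lambda>0$ recovers exactly the Poincar\'e inequality~\eqref{eq:PI0}, which holds by hypothesis. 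Since every step is an equivalence (each operation is reversible on account of the strict positivity of $\CP+\lambda$ and $\lambda$), the chain of implications can be reversed to deduce the desired bound from~\eqref{eq:PI0}.

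There is no genuine analytic obstacle here: the estimate is a direct reformulation of the standard Poincar\'e inequality in the weighted norm $\NN{\cdot}_\lambda$, and the only point requiring a modicum of care is to verify that the algebraic reduction indeed collapses to~\eqref{eq:PI0} rather than to a strictly stronger statement. The optimality of the constant $\beta(\lambda)$ is implicit in the fact that the manipulation produces an \emph{equivalence}, so the bound is sharp whenever~\eqref{eq:PI0} is sharp. I would present the argument by simply exhibiting the equivalent inequalities and remarking that the last one is~\eqref{eq:PI0}.
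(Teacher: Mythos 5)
Your argument is correct and is essentially the paper's proof run in reverse: the paper starts from the identity $\norm{v}_{\Lom{2}}^2=\frac{1}{\CP+\lambda}(\CP\norm{v}_{\Lom{2}}^2+\lambda\norm{v}_{\Lom{2}}^2)$ and applies \eqref{eq:PI0} to the second term, which is exactly the chain of equivalent inequalities you exhibit, read forwards. Nothing is missing.
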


\begin{proof}
For $v\in\VV$ and $\lambda > 0$, using \eqref{eq:PI0}, we have 
\begin{align*}
\norm{v}^2_{\Lom{2}}
&=\frac{1}{\CP+\lambda}\left(\CP\norm{v}^2_{\Lom{2}}+\lambda\norm{v}^2_{\Lom{2}}\right)
\le\frac{1}{\CP+\lambda}\left(\CP\norm{v}^2_{\Lom{2}}+\CP\lambda\norm{\nabla v}^2_{\Lom{2}}\right),
\end{align*}
which yields the claim.
\end{proof}

\subsection{Energy functional}\label{sc:EF}

We define an (energy) functional $\E:\,\VV \to \mathbb{R}$ associated with \eqref{eq:poisson} by 
\begin{align} \label{eq:energyfunctional}
\E(u)=\frac{1}{2} \int_\Omega |\nabla u|^2 \dx - \int_\Omega \F(\x,u(\x)) \dx,
\end{align} 
where, for $\x\in\Omega$, we let
\begin{align*}
\F(\x,t)=\int_0^t f(\x,s) \, \d s,\qquad t\in\mathbb{R}.
\end{align*}
Let $\VV^\prime$ denote the dual space of $ \VV$. Then, for any $u \in \VV$, a straightforward calculation reveals that the G\^ateaux derivative of $\E$ is given by
\begin{align} \label{eq:energyderivative}
\dprod{\E'(u)}{v}:=\int_\Omega \left( \nabla u \cdot \nabla v - f(\x,u)v\right)\dx \qquad \forall v \in \VV,
\end{align}
where $\dprod{\cdot}{\cdot}$ denotes the duality pairing in $\VV^\prime\times\VV$. Hence, the Euler--Lagrange equation of the minimisation problem
\begin{align} \label{eq:minproblem}
u\in\VV:\qquad \E(u)=\min_{v \in \VV} \E(v),
\end{align}
is given in weak form by
\begin{align} \label{eq:poissonweak}
u\in \VV:\qquad \dprod{\E'(u)}{v}=\int_\Omega \left(\nabla u \cdot \nabla v \dx - f(\x,u)v\right) \dx=0 \qquad \forall v \in \VV;
\end{align}
in particular, any critical point (especially, any minimiser) of $\E$ in $\VV$ is a solution to~\eqref{eq:poissonweak}, or equivalently, a weak solution of~\eqref{eq:poisson}.

We introduce the following structural assumptions on the nonlinearity $f$ present in \eqref{eq:poisson}, which are crucial for the remainder of this work.

\begin{assumption}[Nonlinearity $f$]\label{apt:A1}
The function $f:\Omega \times \mathbb{R} \to \mathbb{R} $ satisfies the following properties:
\begin{enumerate}[\rm(i)]
\item $f(\cdot,0) \in \Lom{2}$.
\item $f$ is differentiable in the second variable. 
\item There exists a constant $\CC>0$ such that the set
\begin{equation}\label{eq:set}
\G(\CC):=\left\{\lambda>0:\,\sigma_f(\lambda)< \CC+\nicefrac{1}{\lambda}\right\}
\end{equation}
is non-empty, where we let
\begin{equation}\label{eq:sigma}
\sigma_f(\lambda):=\esssup_{\x \in \Omega} \sup_{u \in \mathbb{R}} \left|\frac{\partial f}{\partial u}(\x,u)+\frac{1}{\lambda}\right|,\qquad\lambda>0.
\end{equation}
\end{enumerate}
\end{assumption}


\begin{remark}
If Assumption \ref{apt:A1} is fulfilled, then the function $g_\lambda$, for $\lambda\in\G(\CC)$, defined by
\begin{equation}\label{eq:g}
g_\lambda(\x,u):=f(\x,u)+\lambda^{-1}u,\qquad (\x,u)\in\Omega\times\mathbb{R},
\end{equation} 
satisfies the uniform Lipschitz continuity bound
\begin{equation}\label{eq:dg}
|g_\lambda(\x,u)-g_\lambda(\x,v)|\le\sigma_f(\lambda)|u-v|\qquad\forall u,v\in\mathbb{R},
\end{equation}
for almost every $\x \in \Omega$, where $\sigma_f(\lambda)<\CC+\nicefrac{1}{\lambda}$, cf.~\eqref{eq:set}.
\end{remark}

\begin{lemma}[Energy representation]\label{lem:Ebound}
Under the Assumption~\ref{apt:A1}, for $\rho>0$ and $\lambda\in\G(\CC)$, the energy functional from~\eqref{eq:energyfunctional} can be represented by 
\begin{equation}\label{eq:repr}
\E(v)=\frac{1}{2\lambda}\NN{v}^2_{\lambda}+\Xi(v)\norm{v}_{\Lom{2}}\qquad\forall v\in\VV,
\end{equation}
where $\Xi:\,\VV\to\mathbb{R}$ is a function that is bounded by
\[
|\Xi(v)|\le\frac12\sigma_f(\lambda)\norm{v}_{\Lom{2}}+\norm{f(\cdot,0)}_{\Lom{2}}\qquad\forall v\in\VV;
\]
in particular, the energy functional $\E$ from~\eqref{eq:energyfunctional} is well-defined on $\VV$.
\end{lemma}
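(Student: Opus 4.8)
The plan is to reorganise the energy functional by a ``completing the square'' argument in the $\lambda$-weighted norm, absorbing the quadratic $\L^2$-contribution of $\NN{\cdot}_\lambda$ into the nonlinear part. First I would introduce the primitive of the shifted nonlinearity $g_\lambda$ from~\eqref{eq:g}, namely $G_\lambda(\x,t):=\int_0^t g_\lambda(\x,s)\ds$. Since $g_\lambda(\x,s)=f(\x,s)+\lambda^{-1}s$, integrating in $s$ gives $G_\lambda(\x,t)=\F(\x,t)+\tfrac{1}{2\lambda}t^2$, and hence $\F(\x,t)=G_\lambda(\x,t)-\tfrac{1}{2\lambda}t^2$. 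Substituting this identity into~\eqref{eq:energyfunctional}, and recalling from~\eqref{eq:norm} that $\tfrac{1}{2\lambda}\NN{v}_\lambda^2=\tfrac12\norm{\nabla v}_{\Lom{2}}^2+\tfrac{1}{2\lambda}\norm{v}_{\Lom{2}}^2$, the gradient terms together with the $\tfrac{1}{2\lambda}\norm{v}_{\Lom{2}}^2$ contributions cancel, leaving \[ \E(v)=\frac{1}{2\lambda}\NN{v}_\lambda^2-\int_\Omega G_\lambda(\x,v)\dx. \] I would then define $\Xi(v):=-\norm{v}_{\Lom{2}}^{-1}\int_\Omega G_\lambda(\x,v)\dx$ for $v\ne0$, and $\Xi(0):=0$, so that the representation~\eqref{eq:repr} holds by construction.

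The second step is to derive the stated bound on $\Xi$, which reduces to a pointwise estimate of $G_\lambda$. Applying the Lipschitz bound~\eqref{eq:dg} with second argument $0$, and noting $g_\lambda(\x,0)=f(\x,0)$, yields $|g_\lambda(\x,s)|\le|f(\x,0)|+\sigma_f(\lambda)|s|$ for almost every $\x\in\Omega$. Integrating from $0$ to $t$ (handling the sign of $t$ with care) then gives the pointwise bound \[ |G_\lambda(\x,t)|\le|f(\x,0)|\,|t|+\tfrac12\sigma_f(\lambda)\,t^2. \] Evaluating at $t=v(\x)$, integrating over $\Omega$, and invoking the Cauchy--Schwarz inequality on the term $\int_\Omega|f(\x,0)|\,|v|\dx\le\norm{f(\cdot,0)}_{\Lom{2}}\norm{v}_{\Lom{2}}$, I obtain \[ \Bigl|\int_\Omega G_\lambda(\x,v)\dx\Bigr|\le\norm{f(\cdot,0)}_{\Lom{2}}\norm{v}_{\Lom{2}}+\tfrac12\sigma_f(\lambda)\norm{v}_{\Lom{2}}^2. \] Dividing by $\norm{v}_{\Lom{2}}$ (the $v=0$ case being trivial) delivers exactly the claimed estimate on $|\Xi(v)|$.

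Well-definedness of $\E$ on $\VV$ then follows at once: by Assumption~\ref{apt:A1}(i) the quantity $\norm{f(\cdot,0)}_{\Lom{2}}$ is finite, while $\lambda\in\G(\CC)$ ensures $\sigma_f(\lambda)<\infty$; consequently the right-hand side of~\eqref{eq:repr} is finite for every $v\in\VV$, and since $\tfrac{1}{2\lambda}\NN{v}_\lambda^2<\infty$ on $\VV$, the value $\E(v)$ is finite throughout.

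I do not anticipate a genuine difficulty here, as the entire lemma rests on the single algebraic observation that the $\lambda^{-1}$-shift defining $g_\lambda$ is precisely the one that cancels the quadratic $\L^2$-term carried by $\NN{\cdot}_\lambda$. The only points requiring mild care are the sign bookkeeping in the pointwise integral estimate for $G_\lambda$ when $t<0$, and the routine verification that the degenerate case $v=0$ is consistent with the representation.
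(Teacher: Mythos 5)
Your proposal is correct and follows essentially the same route as the paper: the paper likewise writes $\F(\x,t)=\int_0^t\bigl(g_\lambda(\x,s)-g_\lambda(\x,0)\bigr)\ds+f(\x,0)t-\tfrac{t^2}{2\lambda}$ (which is exactly your $G_\lambda(\x,t)-\tfrac{t^2}{2\lambda}$, since $g_\lambda(\x,0)=f(\x,0)$), defines the same $\Xi$, and derives the same pointwise bound from the Lipschitz estimate~\eqref{eq:dg} followed by Cauchy--Schwarz. The only cosmetic difference is that you name the primitive $G_\lambda$ explicitly; no gap.
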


\begin{proof}
For any $t\in\mathbb{R}$, we note that
\begin{align*}
\F(\x,t)
&=\int_0^t \left(g_\lambda(\x,s)-g_\lambda(\x,0)\right)\ds+f(\x,0)t-\frac{t^2}{2\lambda}.
\end{align*}
Applying~\eqref{eq:dg}, for almost every $\x\in\Omega$, we observe the bound
\begin{equation}\label{eq:20211125a}
\left|\F(\x,t)+\frac{t^2}{2\lambda}\right|
\le\int_0^t\left|g_\lambda(\x,s)-g_\lambda(\x,0)\right|\ds+|f(\x,0)t|
\le\frac{1}{2}\sigma_f(\lambda)t^2+|f(\x,0)t|.
\end{equation}
Moreover, for any $v\in\VV$, $v\neq 0$, letting
\[
\Xi(v):=-\norm{v}^{-1}_{\Lom{2}}\int_\Omega\left(\F(\x,v(\x))+\frac{1}{2\lambda}v(\x)^2\right)\dx,
\]
we immediately derive the representation~\eqref{eq:repr}. Here, employing~\eqref{eq:20211125a} and using the Cauchy-Schwarz inequality, we infer that
\begin{align*}
\norm{v}_{\Lom{2}}|\Xi(v)|
&\le
\int_\Omega\left|\F(\x,v(\x))+\frac{1}{2\lambda}v(\x)^2\right|\dx
\le\frac12\sigma_f(\lambda)\norm{v}^2_{\Lom{2}}+\norm{f(\cdot,0)}_{\Lom{2}}\norm{v}_{\Lom{2}}.
\end{align*}
This yields the result.
\end{proof}

\begin{remark}\label{rem:wc}
From Lemma~\ref{lem:Ebound} and upon applying Lemma~\ref{lem:P}, for any $v\in\VV$, we infer the lower bound
\begin{align*}
\E(v)&\ge\frac{1}{2}\left(\frac{1}{\lambda}-\beta(\lambda)\sigma_f(\lambda)\right)\NN{v}^2_\lambda-\beta(\lambda)^{\nicefrac12}\norm{f(\cdot,0)}_{\Lom{2}}\NN{v}_{\lambda}.
\end{align*}
Invoking the definition of the set $\G(\CC)$ from~\eqref{eq:set}, we notice that
\[
\frac{1}{\lambda}-\beta(\lambda)\sigma_f(\lambda)>\frac{1-\CP\CC}{\CP+\lambda}.
\]
Hence,  if $\rho\le\nicefrac{1}{\CP}$ then it follows that 
\begin{equation}\label{eq:wc}
\E(v)\to+\infty\qquad\text{whenever}\qquad \NN{v}_{\lambda}\to\infty;
\end{equation}
this property is referred to as the \emph{weak coercivity} of $\E$.
\end{remark}

The following result is instrumental for the analysis below.

\begin{lemma}[Energy expansion]\label{lem:EE}
Suppose that Assumption \ref{apt:A1} is satisfied for some $\CC>0$, and consider $\lambda\in\G(\CC)$, cf. \eqref{eq:set}. Then, for any $u,v\in\VV$, 
it holds that
\[
\E(v)-\E(u)
=\dprod{\E'(u)}{v-u}+\frac{1}{2\lambda}\NN{v-u}^2_\lambda+\Psi_\lambda(u,v),
\]
where $\E'$ is the derivative from \eqref{eq:energyderivative}, and $\Psi_\lambda(u,v)$ is a remainder term that satisfies the bound
\begin{align*} 
|\Psi_\lambda(u,v)|\le\frac12\sigma_f(\lambda)\norm{v-u}^2_{\Lom{2}},
\end{align*}
with $\sigma_f(\lambda)$ from \eqref{eq:sigma}.
\end{lemma}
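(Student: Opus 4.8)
The plan is to compute $\E(v)-\E(u)$ directly from the definition \eqref{eq:energyfunctional}, collect the terms so that they reproduce the first two summands on the right-hand side, and read off $\Psi_\lambda(u,v)$ as the residual. First I would treat the Dirichlet part: writing $\nabla v=\nabla u+\nabla(v-u)$ and expanding the square gives
\[
\frac12\int_\Omega\left(|\nabla v|^2-|\nabla u|^2\right)\dx=\int_\Omega\nabla u\cdot\nabla(v-u)\dx+\frac12\norm{\nabla(v-u)}_{\Lom{2}}^2.
\]
The first term is exactly the gradient contribution to $\dprod{\E'(u)}{v-u}$ in \eqref{eq:energyderivative}, while the second coincides with the gradient part of $\tfrac{1}{2\lambda}\NN{v-u}_\lambda^2$, recalling that $\tfrac{1}{2\lambda}\NN{w}_\lambda^2=\tfrac12\norm{\nabla w}_{\Lom{2}}^2+\tfrac{1}{2\lambda}\norm{w}_{\Lom{2}}^2$. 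Hence all gradient terms cancel when the residual is formed.

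Next I would handle the reaction part by the fundamental theorem of calculus, $\F(\x,v)-\F(\x,u)=\int_u^v f(\x,s)\ds$, subtracting the linear part $f(\x,u)(v-u)=\int_u^v f(\x,u)\ds$, which is precisely the reaction contribution to $\dprod{\E'(u)}{v-u}$. After the gradient cancellations, the residual reduces to
\[
\Psi_\lambda(u,v)=-\int_\Omega\int_u^v\bigl(f(\x,s)-f(\x,u)\bigr)\ds\,\dx-\frac{1}{2\lambda}\norm{v-u}_{\Lom{2}}^2,
\]
where the last term is the leftover $\L^2$-part of $\tfrac{1}{2\lambda}\NN{v-u}_\lambda^2$.

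The decisive manoeuvre is then to pass from $f$ to $g_\lambda$ from \eqref{eq:g}: since $f(\x,s)-f(\x,u)=\bigl(g_\lambda(\x,s)-g_\lambda(\x,u)\bigr)-\lambda^{-1}(s-u)$ and $\int_u^v(s-u)\ds=\tfrac12(v-u)^2$, the contribution $\int_\Omega\lambda^{-1}\int_u^v(s-u)\ds\,\dx$ equals exactly $\tfrac{1}{2\lambda}\norm{v-u}_{\Lom{2}}^2$ and cancels the leftover $\L^2$-term. This leaves $\Psi_\lambda(u,v)=-\int_\Omega\int_u^v\bigl(g_\lambda(\x,s)-g_\lambda(\x,u)\bigr)\ds\,\dx$. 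Finally I would invoke the uniform Lipschitz bound \eqref{eq:dg} together with $\int_u^v|s-u|\ds=\tfrac12(v-u)^2$ to conclude $|\Psi_\lambda(u,v)|\le\tfrac12\sigma_f(\lambda)\norm{v-u}_{\Lom{2}}^2$.

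I do not anticipate a genuine obstacle, since the computation is elementary once the identity is set up. The one point requiring care is the bookkeeping showing that the $\L^2$-part of the stabilised norm is precisely absorbed by the $\lambda^{-1}$-shift encoded in $g_\lambda$; this is exactly what makes the Lipschitz estimate \eqref{eq:dg} for $g_\lambda$ (rather than an unavailable bound on $f$ itself) the correct tool for controlling the remainder.
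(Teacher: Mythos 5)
Your proof is correct and follows essentially the same route as the paper: isolating $\dprod{\E'(u)}{v-u}$, recovering the $\frac{1}{2\lambda}\NN{v-u}^2_\lambda$ term by trading $f$ for $g_\lambda$ so that the $\lambda^{-1}$-shift absorbs the leftover $\L^2$-contribution, and bounding the remainder via the Lipschitz estimate \eqref{eq:dg}. The only cosmetic difference is that you apply the fundamental theorem of calculus pointwise to $\F(\x,\cdot)$ on $[u(\x),v(\x)]$, whereas the paper parametrises the segment $s\mapsto\E(u+s(v-u))$ in $\VV$; the two computations coincide under the substitution $t=u(\x)+s(v(\x)-u(\x))$ (and your identity $\int_u^v|s-u|\ds=\tfrac12(v-u)^2$ should be read as holding up to sign when $v<u$, which does not affect the final bound).
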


\begin{proof}
Given $u,v\in\VV$, we define $\delta:=v-u$. Then, by the main theorem of calculus, we have
\[
\E(v)-\E(u)
=\int_0^1\frac{\d}{\ds}\E(u+s\delta)\ds
=\int_0^1\dprod{\E'(u+s\delta)}{\delta}\ds.
\]
Recalling \eqref{eq:energyderivative}, we infer that
\begin{align*}
\E(v)-\E(u)
&=\int_0^1\int_\Omega \left( \nabla (u+s\delta) \cdot \nabla \delta - f(\x,u+s\delta)\delta\right)\dx\ds\nonumber\\
&=\dprod{\E'(u)}{\delta}
+\frac12\norm{\nabla\delta}^2_{\Lom{2}}
-\int_0^1\int_\Omega\left(f(\x,u+s\delta)-f(\x,u)\right)\delta\dx\ds.
\end{align*}
Hence, by definition of the function $g_\lambda$ from \eqref{eq:g}, we can write
\[
\E(v)-\E(u)
=\dprod{\E'(u)}{\delta}
+\frac12\norm{\nabla\delta}^2_{\Lom{2}}+\frac{1}{2\lambda}\norm{\delta}^2_{\Lom{2}}
-\int_0^1\int_\Omega\left(g_\lambda(\x,u+s\delta)-g_\lambda(\x,u)\right)\delta\dx\ds.
\]
Invoking \eqref{eq:dg} we observe that
\[
\left|\int_0^1\int_\Omega\left(g_\lambda(\x,u+s\delta)-g_\lambda(\x,u)\right)\delta\dx\ds\right|
\le \sigma_f(\lambda)\int_0^1\int_\Omega s|\delta|^2\dx\ds
=\frac12\sigma_f(\lambda)\norm{\delta}^2_{\Lom{2}},
\]
which shows the claim.
\end{proof}
 
We conclude this section with the ensuing observation.

\begin{lemma}[Lipschitz continuity of $\E'$] \label{lem:aux2}
Given Assumption~\ref{apt:A1} for some $\rho>0$, and $\lambda\in\G(\CC)$, cf. \eqref{eq:set}. Then $\E':\VV \to \VV'$ is (uniformly) Lipschitz continuous in the sense that
\begin{align} \label{eq:lipschitz}
|\dprod{\E'(u)-\E'(v)}{w}| \leq L_{\E'}(\lambda) \NN{u-v}_{\lambda} \NN{w}_{\lambda} \qquad \forall u,v,w \in \VV,
\end{align}
where 
\begin{equation}\label{eq:L}
L_{\E'}(\lambda)=\frac{1}{\lambda} + \beta(\lambda)\sigma_f(\lambda),
\end{equation}
with $\beta(\lambda)$ from~\eqref{eq:beta}.
\end{lemma}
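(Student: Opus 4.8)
The plan is to estimate the duality pairing $\dprod{\E'(u)-\E'(v)}{w}$ directly from the explicit formula~\eqref{eq:energyderivative} for $\E'$. Writing out the difference, the leading diffusion term contributes $\int_\Omega \nabla(u-v)\cdot\nabla w\dx$, while the reaction part contributes $-\int_\Omega (f(\x,u)-f(\x,v))w\dx$. The natural first move is to absorb a $\nicefrac{1}{\lambda}$-shift into the reaction term so that the Lipschitz bound~\eqref{eq:dg} for $g_\lambda$ from the Remark following Assumption~\ref{apt:A1} becomes available; concretely, I would add and subtract $\lambda^{-1}\int_\Omega (u-v)w\dx$ and regroup using the definition $g_\lambda(\x,u)=f(\x,u)+\lambda^{-1}u$ from~\eqref{eq:g}.

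After regrouping, I expect the pairing to split as
\[
\dprod{\E'(u)-\E'(v)}{w}
=\int_\Omega\nabla(u-v)\cdot\nabla w\dx
+\frac{1}{\lambda}\int_\Omega(u-v)w\dx
-\int_\Omega\left(g_\lambda(\x,u)-g_\lambda(\x,v)\right)w\dx.
\]
The first two terms are precisely the bilinear form $\lambda^{-1}\NN{\cdot}_\lambda$-inner-product structure: by Cauchy--Schwarz they are bounded by $\norm{\nabla(u-v)}_{\Lom2}\norm{\nabla w}_{\Lom2}+\lambda^{-1}\norm{u-v}_{\Lom2}\norm{w}_{\Lom2}$, which I would then dominate by $\lambda^{-1}\NN{u-v}_\lambda\NN{w}_\lambda$ using the defining form of the norm~\eqref{eq:norm}. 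This is where the $\nicefrac{1}{\lambda}$ summand of $L_{\E'}(\lambda)$ in~\eqref{eq:L} originates. For the third term, I would apply the pointwise Lipschitz bound~\eqref{eq:dg} to get $|g_\lambda(\x,u)-g_\lambda(\x,v)|\le\sigma_f(\lambda)|u-v|$, then Cauchy--Schwarz in $\L^2(\Omega)$, yielding a bound by $\sigma_f(\lambda)\norm{u-v}_{\Lom2}\norm{w}_{\Lom2}$.

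The final step is to convert the two $\L^2$-norms in the reaction estimate into the energy norm $\NN{\cdot}_\lambda$, and this is where the Poincar\'e-type bound from Lemma~\ref{lem:P} enters: applying $\norm{v}_{\Lom2}\le\beta(\lambda)^{\nicefrac12}\NN{v}_\lambda$ to both $u-v$ and $w$ produces the factor $\beta(\lambda)$, giving the contribution $\beta(\lambda)\sigma_f(\lambda)\NN{u-v}_\lambda\NN{w}_\lambda$ and hence the second summand of~\eqref{eq:L}. Combining the three estimates by the triangle inequality yields exactly the constant $L_{\E'}(\lambda)=\lambda^{-1}+\beta(\lambda)\sigma_f(\lambda)$. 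I do not anticipate a genuine obstacle here; the only point requiring care is the bookkeeping of which norm ($\L^2$ versus $\NN{\cdot}_\lambda$) is used where, and ensuring the $\lambda^{-1}$-scaling is handled consistently so that the diffusion term alone contributes $\lambda^{-1}$ rather than a looser constant.
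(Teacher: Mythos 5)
Your proposal is correct and follows essentially the same route as the paper: both split off the $\lambda^{-1}(u-v)$ shift via $g_\lambda$, bound the $g_\lambda$-difference with~\eqref{eq:dg} and Cauchy--Schwarz, absorb the gradient term together with the $\lambda^{-1}\,\L^2$-term into $\lambda^{-1}\NN{u-v}_\lambda\NN{w}_\lambda$, and convert the remaining $\L^2$-products via Lemma~\ref{lem:P} to obtain $\beta(\lambda)\sigma_f(\lambda)$. The only cosmetic difference is that you regroup the pairing before estimating, while the paper estimates the reaction term first and regroups afterwards; the constants and key lemmas are identical.
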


\begin{proof}
Let $u,v,w\in\VV$. With the aid of the Cauchy-Schwarz inequality we have that
\[
\left|\int_\Omega\nabla(u-v)\cdot\nabla w\dx\right|
\le\norm{\nabla(u-v)}_{\Lom{2}}\norm{\nabla w}_{\Lom{2}}.
\]
Furthermore, for any $\lambda\in\G(\CC)$, using~\eqref{eq:dg}, it follows that
\begin{align} \label{eq:flipschitz}
\begin{split}
\left|\int_\Omega\left(f(\x,u)-f(\x,v)\right)w\dx\right|
&\le\int_\Omega|g_\lambda(\x,u)-g_\lambda(\x,v)||w|\dx
+\frac{1}{\lambda}\int_\Omega\left|u-v\right||w|\dx\\
&\le\left(\sigma_f(\lambda)+\frac{1}{\lambda}\right)\norm{u-v}_{\Lom{2}}\norm{w}_{\Lom{2}}.
\end{split}
\end{align}
Hence, from~\eqref{eq:energyderivative}, we obtain
\begin{align*}
|\dprod{\E'(u)-\E'(v)}{w}|
&\le \norm{\nabla(u-v)}_{\Lom{2}}\norm{\nabla w}_{\Lom{2}}
+\left(\sigma_f(\lambda)+\frac{1}{\lambda}\right)\norm{u-v}_{\Lom{2}}\norm{w}_{\Lom{2}}\\
&\le\frac{1}{\lambda}\NN{u-v}_{\lambda}\NN{w}_{\lambda}+\sigma_f(\lambda)\norm{u-v}_{\Lom{2}}\norm{w}_{\Lom{2}}.
\end{align*}
Applying Lemma~\ref{lem:P} completes the argument.
\end{proof}

\section{Iterative energy minimisation}
\label{sec:energy_minimisation}
In this section, we will present an iterative variational approach for the minimisation problem~\eqref{eq:minproblem}, and establish some energy-related convergence properties.

\subsection{Iteration scheme}\label{sc:it}

We begin by introducing an iterative scheme for the solution of~\eqref{eq:poissonweak}. For this purpose, we pursue the idea of approximating a solution to~\eqref{eq:02} by means of the (discrete) iteration~\eqref{eq:03}, with a fixed time step $\Delta t>0$. In weak form, given $u^{n-1}\in\VV$, for $n\ge 1$, we seek $u^{n} \in \VV$ such that 
\begin{equation}
\label{eq:04}
{\frac{1}{\Delta t}\int_{\Omega}(u^{n}-u^{n-1})v\dx}+\int_{\Omega}{\nabla u^{n}\cdot\nabla v\dx}=\int_{\Omega}{f(\x,u^{n-1})v \dx} \qquad \forall v \in \VV.
\end{equation}
For $n=0$, we let $u^{0}\in\Lom{2}$ be a suitable initial guess. We emphasise that the iteration~\eqref{eq:04}, for given $u^n$, is a \emph{linear} problem for~$u^{n+1}$, and can thus be viewed as a \emph{iterative linearisation} of~\eqref{eq:poisson}. Notice that it can be written equivalently as 
\begin{align} \label{eq:iteration}
\B_{\Delta t}(u^{n},v)=\l_{\Delta t }(u^{n-1};v) \qquad \forall v \in \VV,
\end{align}
where, for $\lambda >0$, we define the \emph{bilinear form} $\B_{\lambda}:\,\VV \times \VV \to \mathbb{R}$ by 
\begin{equation}
\label{eq:bilinear}
\B_{\lambda}(u,v):=\int_{\Omega}{(\lambda  \nabla u \cdot \nabla v+uv)\dx}, \qquad u,v \in \VV,
\end{equation}
and, for given $y \in \Lom{2}$, the \emph{linear form} $\l_\lambda(y;\cdot):\,\VV \to \mathbb{R}$ by
\begin{align} \label{eq:linearform}
\l_{\lambda}(y;v):=\int_{\Omega}{\left(y+\lambda f(\x,y)\right)v\dx},\qquad v\in\VV.
\end{align}
Recalling \eqref{eq:energyderivative}, for all $u\in\VV$, we notice the identity
\begin{equation}\label{eq:Eid}
\dprod{\E'(u)}{v}:=\frac{1}{\lambda}\left(\B_\lambda(u,v)-\l_\lambda(u;v)\right) \qquad \forall v \in \VV.
\end{equation}

\begin{lemma}[Boundedness of $\l_\lambda$]\label{lem:l}
Under Assumption~\ref{apt:A1}, for all $\lambda\in\G(\CC)$, cf.~\eqref{eq:set}, and any $y \in \Lom{2}$, the linear form $\ell_\lambda(y;\cdot)$ from~\eqref{eq:linearform} is bounded in the sense that
\[
|\l_\lambda(y;v)|
\le \lambda\beta(\lambda)^{\nicefrac12}\left(\sigma_f(\lambda)\norm{y}_{\Lom{2}}+\norm{f(\cdot,0)}_{\Lom{2}}\right)\NN{v}_{\lambda}\qquad\forall v\in\VV,
\]
with $\beta(\lambda)$ and $\sigma_f(\lambda)$ from \eqref{eq:beta} and \eqref{eq:sigma}, respectively.
\end{lemma}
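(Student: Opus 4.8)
The plan is to collapse the two-term linear form into a single integral against the shifted nonlinearity $g_\lambda$ and then exploit its uniform Lipschitz property~\eqref{eq:dg}. The key algebraic observation is that the integrand of $\l_\lambda(y;\cdot)$ simplifies pointwise: by the definition of $g_\lambda$ in~\eqref{eq:g}, one has $y+\lambda f(\x,y)=\lambda g_\lambda(\x,y)$, and hence
\[
\l_\lambda(y;v)=\lambda\int_\Omega g_\lambda(\x,y)\,v\dx\qquad\forall v\in\VV.
\]
This recasts the boundedness question entirely in terms of $g_\lambda$, for which the Lipschitz estimate~\eqref{eq:dg} is available, and it already accounts for the factor $\lambda$ appearing in the claimed constant.

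Next I would estimate $g_\lambda(\x,y)$ pointwise by writing $g_\lambda(\x,y)=\bigl(g_\lambda(\x,y)-g_\lambda(\x,0)\bigr)+g_\lambda(\x,0)$ and noting $g_\lambda(\x,0)=f(\x,0)$. Applying~\eqref{eq:dg} with the pair $(y,0)$ yields $|g_\lambda(\x,y)|\le\sigma_f(\lambda)|y|+|f(\x,0)|$ for almost every $\x\in\Omega$, where $f(\cdot,0)\in\Lom{2}$ by Assumption~\ref{apt:A1}(i). Inserting this into the integral and invoking the Cauchy--Schwarz inequality twice gives
\[
|\l_\lambda(y;v)|\le\lambda\left(\sigma_f(\lambda)\norm{y}_{\Lom{2}}+\norm{f(\cdot,0)}_{\Lom{2}}\right)\norm{v}_{\Lom{2}}.
\]

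Finally, I would replace the $\Lom{2}$-norm of the test function by the parametrised norm $\NN{\cdot}_\lambda$ via the Poincar\'e inequality of Lemma~\ref{lem:P}, namely $\norm{v}_{\Lom{2}}\le\beta(\lambda)^{\nicefrac12}\NN{v}_\lambda$, which delivers the asserted estimate with the precise constant $\lambda\beta(\lambda)^{\nicefrac12}$. I do not anticipate any genuine obstacle: the argument is a short chain of standard inequalities, and the only step requiring a moment's thought is the initial identity $y+\lambda f(\x,y)=\lambda g_\lambda(\x,y)$, which is what reduces the problem to a single application of~\eqref{eq:dg}.
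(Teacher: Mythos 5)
Your proposal is correct and follows essentially the same route as the paper: rewrite $\l_\lambda(y;v)=\lambda\int_\Omega g_\lambda(\x,y)v\dx$, split off $g_\lambda(\cdot,0)=f(\cdot,0)$, apply the Lipschitz bound~\eqref{eq:dg} and Cauchy--Schwarz, and finish with Lemma~\ref{lem:P}. No gaps.
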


\begin{proof}
Given $\lambda\in\G(\CC)$ and $y\in\Lom{2}$. Then, for any $v\in\VV$, we have
\begin{equation}\label{eq:lg}
\l_\lambda(y;v)
=\lambda\int_\Omega g_\lambda(\x,y)v\dx,
\end{equation}
with the function $g_\lambda$ from \eqref{eq:g}. Using that $g_\lambda(\cdot,0)=f(\cdot,0)\in\Lom{2}$, we deduce the identity
\[
\l_\lambda(y;v)
=\lambda\int_\Omega (g_\lambda(\x,y)-g_{\lambda}(\x,0))v\dx+\lambda\int_\Omega f(\x,0)v\dx.
\]
Exploiting \eqref{eq:dg} and applying the Cauchy-Schwarz inequality, it follows that
\begin{align*}
|\l_\lambda(y;v)|
\le \lambda\left(\sigma_f(\lambda)\norm{y}_{\Lom{2}}+\norm{f(\cdot,0)}_{\Lom{2}}\right)\norm{v}_{\Lom{2}}.
\end{align*}
Recalling Lemma \ref{lem:P} completes the proof.
\end{proof}

We note that the bilinear form $\B_\lambda$ from~\eqref{eq:bilinear} is an inner product on $\VV \times \VV$ that induces the norm from~\eqref{eq:norm}; in particular, since this norm, for fixed $\lambda>0$, is equivalent to the standard $\H^1$-norm, the space $\VV$ endowed with the inner product from~\eqref{eq:bilinear} is a Hilbert space. Hence, applying the Riesz representation theorem, we immediately obtain the following result.

\begin{proposition}[Well-posedness]\label{pr:wp}
Let Assumption~\ref{apt:A1} be fulfilled, and fix $\Delta t\in\G(\CC)$, cf.~\eqref{eq:set}. Then, for any initial guess $u^0\in\Lom{2}$, the iteration \eqref{eq:iteration} is well-defined for all $n\ge 1$.
\end{proposition}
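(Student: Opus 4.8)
The plan is to recognise the iteration \eqref{eq:iteration} as the variational characterisation of the Riesz representative of the linear functional $\l_{\Delta t}(u^{n-1};\cdot)$ with respect to the inner product $\B_{\Delta t}$, and to close the argument by a short induction on $n$. The hypothesis $\Delta t\in\G(\CC)$ enters precisely to guarantee that $\sigma_f(\Delta t)$ is finite (cf.~\eqref{eq:set}), which is what renders Lemma~\ref{lem:l} applicable at the chosen time step.

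First, I would record the Hilbert space structure already observed above: for $\Delta t>0$, the bilinear form $\B_{\Delta t}$ from \eqref{eq:bilinear} is symmetric, bilinear and positive definite, and its induced norm is precisely $\NN{\cdot}_{\Delta t}$ from \eqref{eq:norm}; as this norm is equivalent to the standard $\H^1$-norm, the space $\VV$ equipped with $\B_{\Delta t}$ is complete, hence a real Hilbert space.

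Next, I would proceed by induction on $n\ge1$, maintaining the invariant that the datum $u^{n-1}$ of the current step lies in $\Lom{2}$. For the base case $n=1$ this holds by the assumption $u^0\in\Lom{2}$. Given $u^{n-1}\in\Lom{2}$, the map $v\mapsto\l_{\Delta t}(u^{n-1};v)$ is linear by \eqref{eq:linearform}, and Lemma~\ref{lem:l} shows that it is bounded on $(\VV,\NN{\cdot}_{\Delta t})$, so it defines an element of $\VV'$. The Riesz representation theorem then yields a unique $u^n\in\VV$ satisfying $\B_{\Delta t}(u^n,v)=\l_{\Delta t}(u^{n-1};v)$ for all $v\in\VV$, which is exactly \eqref{eq:iteration}; this establishes existence and uniqueness of $u^n$ at level $n$.

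Finally, to propagate the induction I would invoke the embedding $\VV=\H_0^1(\Omega)\subset\Lom{2}$: the newly constructed $u^n$ lies in $\VV$, hence in $\Lom{2}$, so the invariant required by Lemma~\ref{lem:l} is restored and the iteration is well-defined for every $n\ge1$. I do not expect any genuine obstacle here; the only point meriting a moment's care is the bookkeeping between the two spaces, namely that boundedness of the linear form is available under the weaker requirement $u^{n-1}\in\Lom{2}$, whereas the solution is produced in the stronger space $\VV\subset\Lom{2}$, so that the inductive hypothesis regenerates itself cleanly.
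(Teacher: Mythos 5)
Your proof is correct and follows exactly the route the paper takes: it notes that $\B_{\Delta t}$ equips $\VV$ with a Hilbert space structure, invokes Lemma~\ref{lem:l} (valid for $\Delta t\in\G(\CC)$) for the boundedness of $\l_{\Delta t}(u^{n-1};\cdot)$, and applies the Riesz representation theorem. The explicit induction propagating $u^{n}\in\VV\subset\Lom{2}$ is left implicit in the paper but is the same underlying argument.
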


\subsection{Convergence analysis in closed subspaces}

Let $\WW \subseteq\VV$ be a closed subspace of $\VV$ (e.g., a finite dimensional Galerkin subspace, or $\VV$ itself). We restrict the weak formulation~\eqref{eq:poissonweak} to $\WW$, viz.
\begin{align} \label{eq:weakW}
u\in\WW:\qquad\int_\Omega \nabla u \cdot \nabla v \dx= \int_\Omega f(\x,u)v \dx \qquad \forall v \in \WW.
\end{align}
In accordance with \eqref{eq:iteration}, for an initial guess $u^0 \in \Lom{2}$, we consider the \emph{iterative linearisation} scheme
\begin{align} \label{eq:iterationW}
u^{n+1} \in \WW: \qquad \B_{\Delta t}(u^{n+1},v)=\ell_{\Delta t}(u^{n};v) \qquad \forall v \in \WW,
\end{align}
which, by arguments similar to those leading to Proposition~\ref{pr:wp}, is well-posed for all $n\ge 0$ if $\Delta t\in\G(\CC)$. Furthermore, as before in \S\ref{sc:EF}, it holds that~\eqref{eq:weakW} is the Euler--Lagrange formulation for critical points of the energy functional $\E$ from \eqref{eq:energyfunctional} on the subspace $\WW$. In particular, the weak equation~\eqref{eq:weakW} can be stated equivalently as
\begin{align} \label{eq:eprimeprob}
u \in \WW: \qquad \dprod{\E'(u)}{v}=0 \qquad\forall v\in\WW,
\end{align}
with $\E'$ from \eqref{eq:energyderivative}, and $\dprod{\cdot}{\cdot}$ the duality pairing on $\WW^\prime\times\WW$. 

\subsubsection{The special case $\CC\le\nicefrac{1}{\CP}$}\label{sc:special}

We first consider the situation where Assumption~\ref{apt:A1}~(iii) is fulfilled with $\CC\le\nicefrac{1}{\CP}$, where $\CP>0$ is the constant from the Poincar\'e inequality~\eqref{eq:PI0}.

\begin{theorem}[Convergence for $\CC\le\nicefrac{1}{\CP}$] \label{thm:convergence}
Let Assumption \ref{apt:A1} be satisfied with $\CC\le\nicefrac{1}{\CP}$, and consider $\Delta t\in\G(\CC)$, cf. \eqref{eq:set}. Then, for any initial guess $u^0\in\Lom{2}$, the sequence generated by \eqref{eq:iterationW} converges to a unique limit $u^\star\in\WW$ with respect to the norm $\NN{\cdot}_{\Delta t}$, and it holds $\dprod{\E'(u^\star)}{w}=0$ for all $w\in\WW$.
\end{theorem}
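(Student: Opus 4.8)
The plan is to recast the iteration~\eqref{eq:iterationW} as a genuine fixed-point iteration $u^{n+1}=\Phi(u^n)$ for a map $\Phi$ and to show that $\Phi$ is a contraction on $\WW$ with respect to $\NN{\cdot}_{\Delta t}$, so that the Banach fixed-point theorem delivers existence, uniqueness of the limit, and convergence all at once. For $y\in\Lom{2}$ let $\Phi(y)\in\WW$ be the unique element with $\B_{\Delta t}(\Phi(y),v)=\ell_{\Delta t}(y;v)$ for all $v\in\WW$; this is well-defined by the Riesz representation theorem exactly as in Proposition~\ref{pr:wp}, since $\B_{\Delta t}$ is the inner product inducing $\NN{\cdot}_{\Delta t}$ on the \emph{closed}, hence complete, subspace $\WW$. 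Any fixed point $u^\star=\Phi(u^\star)$ then satisfies $\B_{\Delta t}(u^\star,v)=\ell_{\Delta t}(u^\star;v)$ for all $v\in\WW$, which by the identity~\eqref{eq:Eid} is exactly $\dprod{\E'(u^\star)}{w}=0$ for all $w\in\WW$; thus the property~\eqref{eq:eprimeprob} comes for free once convergence to a fixed point is in hand. (A purely energy-monotonicity argument via Lemma~\ref{lem:EE} would only yield $\NN{u^{n+1}-u^n}_{\Delta t}\to0$, which alone does not force convergence of the sequence, so the contraction route is preferable.)

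For the contraction estimate I would take $y,\tilde y\in\Lom{2}$, subtract the defining relations for $\Phi(y)$ and $\Phi(\tilde y)$, and test against $v=\Phi(y)-\Phi(\tilde y)$. Writing $\ell_{\Delta t}$ in the form~\eqref{eq:lg}, the right-hand side becomes $\Delta t\int_\Omega\bigl(g_{\Delta t}(\x,y)-g_{\Delta t}(\x,\tilde y)\bigr)v\dx$. Applying the uniform Lipschitz bound~\eqref{eq:dg}, the Cauchy--Schwarz inequality, and then the Poincar\'e inequality of Lemma~\ref{lem:P} twice (to convert the $\Lom{2}$-norms of $y-\tilde y$ and of $v$ into $\NN{\cdot}_{\Delta t}$-norms), one cancels a common factor $\NN{\Phi(y)-\Phi(\tilde y)}_{\Delta t}$ and obtains
\[
\NN{\Phi(y)-\Phi(\tilde y)}_{\Delta t}\le \Delta t\,\sigma_f(\Delta t)\,\beta(\Delta t)\,\NN{y-\tilde y}_{\Delta t},
\]
so that the contraction constant is $L:=\Delta t\,\sigma_f(\Delta t)\,\beta(\Delta t)$.

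The crux, and the step I expect to be the main obstacle, is to verify $L<1$, since this is where both hypotheses $\Delta t\in\G(\CC)$ and $\CC\le\nicefrac1{\CP}$ are consumed. Using $\sigma_f(\Delta t)<\CC+\nicefrac1{\Delta t}$ from~\eqref{eq:set} together with $\beta(\Delta t)=\CP/(\CP+\Delta t)$ from~\eqref{eq:beta}, I would estimate
\[
L<\Delta t\Bigl(\CC+\tfrac1{\Delta t}\Bigr)\frac{\CP}{\CP+\Delta t}=(\CC\,\Delta t+1)\,\frac{\CP}{\CP+\Delta t},
\]
and then exploit $\CC\le\nicefrac1{\CP}$ to bound $\CC\,\Delta t+1\le(\Delta t+\CP)/\CP$; the two factors cancel and give $L<1$. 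The subtle point is that the inequality $\sigma_f(\Delta t)<\CC+\nicefrac1{\Delta t}$ defining $\G(\CC)$ is strict, which keeps $L$ strictly below $1$ even in the borderline case $\CC=\nicefrac1{\CP}$.

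Finally, with $0\le L<1$ on the complete metric space $(\WW,\NN{\cdot}_{\Delta t})$, the Banach fixed-point theorem furnishes a unique fixed point $u^\star\in\WW$ to which the iterates converge geometrically, for every admissible initial guess $u^0\in\Lom{2}$ (note that $u^1=\Phi(u^0)\in\WW$, so from $n=1$ onward the iteration lives in $\WW$). The limit is independent of $u^0$ because the fixed point is unique, and combining this with the Euler--Lagrange identity from the first paragraph completes the argument.
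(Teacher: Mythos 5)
Your proposal is correct and follows essentially the same route as the paper: both define the solution map $\T$ via the Riesz representation theorem, derive the contraction bound $\NN{\T(u)-\T(v)}_{\Delta t}\le \Delta t\,\sigma_f(\Delta t)\,\beta(\Delta t)\NN{u-v}_{\Delta t}$ by testing with $\delta=\T(u)-\T(v)$ and applying \eqref{eq:dg} together with Lemma~\ref{lem:P}, verify that the contraction constant is strictly below $1$ using $\sigma_f(\Delta t)<\CC+\nicefrac{1}{\Delta t}\le\nicefrac{1}{\CP}+\nicefrac{1}{\Delta t}$, and conclude via Banach's fixed point theorem and the identity \eqref{eq:Eid}. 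The argument, including the handling of the initial guess $u^0\in\Lom{2}$ and the borderline case $\CC=\nicefrac{1}{\CP}$, matches the paper's proof.
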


\begin{proof}
By Lemma \ref{lem:l}, for any $u\in\Lom{2}$, the linear form $\l_{\Delta t}(u;\cdot)$ is well-defined and bounded on~$\WW$. Hence, by the coercivity of the bilinear form $\B_{\Delta t}$ from \eqref{eq:bilinear}, the mapping $\T:\,\Lom{2}\to\WW$ defined via the weak formulation
\[
u\mapsto\T(u):\qquad
\B_{\Delta t}(\T(u),v)=\ell_{\Delta t}(u;v) \qquad \forall v \in \WW,
\]
is well-defined. Moreover, for any $u,v\in\WW$, letting $\delta:=\T(u)-\T(v)$ and invoking \eqref{eq:lg}, we note that
\[
\NN{\delta}^2_{\Delta t}=\B_{\Delta t}(\delta,\delta)
=\l_{\Delta t}(u;\delta)-\l_{\Delta t}(v;\delta)
=\Delta t\int_\Omega (g_{\Delta t}(\x,u)-g_{\Delta t}(\x,v))\delta\dx.
\]
Employing \eqref{eq:dg} and using the Cauchy-Schwarz inequality, it follows that
\[
\NN{\delta}^2_{\Delta t}\le \sigma_f(\Delta t)\Delta t\int_\Omega |u-v||\delta|\dx
\le\sigma_f(\Delta t)\Delta t\norm{u-v}_{\Lom{2}}\norm{\delta}_{\Lom{2}}.
\]
Involving Lemma \ref{lem:P}, we infer the stability bound
\[
\NN{\T(u)-\T(v)}_{\Delta t}
\le\beta(\Delta t)\sigma_f(\Delta t)\Delta t\NN{u-v}_{\Delta t}.
\]
Exploiting that 
$
\sigma_f(\Delta t)<\CC+\nicefrac{1}{\Delta t}\le\nicefrac{1}{\CP}+\nicefrac{1}{\Delta t},
$ 
we notice that $\beta(\Delta t)\sigma_f(\Delta t)\Delta t<1$. Hence, we conclude that $\T|_{\WW}:\,\WW\to\WW$ is a contractive operator. Therefore, by Banach's fixed point theorem, the iteration defined by
$u^{n+1}=\T(u^n)$, for $n\ge 0$,
which is exactly~\eqref{eq:iterationW}, converges to a unique fixed point $u^\star\in\WW$. Moreover, for any $w\in\WW$, 
it follows from~\eqref{eq:Eid} that
\[
\dprod{\E'(u^\star)}{w}
=\frac{1}{\Delta t}\left(\B_{\Delta t}(u^\star,w)-\l_{\Delta t}(u^\star;w)\right)
=\frac{1}{\Delta t}\left(\B_{\Delta t}(\T(u^\star),w)-\l_{\Delta t}(u^\star;w)\right)=0,
\]
which completes the argument.
\end{proof}

\subsubsection{The general case}

If the constant $\CC$ in Assumption~\ref{apt:A1}~(iii) is not sufficiently small, then a (weaker) convergence result for the iteration~\eqref{eq:iterationW} can still be established under additional prerequisites, for instance if the energy functional $\E$ from~\eqref{eq:energyfunctional} is weakly coercive (see Remark~\ref{rem:37} below for further details on this matter) and if the nonlinearity $f$ is continuous.

We begin with the following stability result.

\begin{proposition}[Stability]\label{pr:stab}
Let Assumption \ref{apt:A1} be fulfilled for some $\CC>0$, and define 
\[
\mu_f:=\begin{cases}
\left(\sup\G(\CC)\right)^{-1}&\text{if }\sup\G(\CC)<\infty\\
0&\text{otherwise},
\end{cases}
\] 
and $\kappa_f:=\max\left\{\mu_f+\nicefrac{\CC}{2}-\nicefrac{1}{2\CP},0\right\}$.
Then, for any $\Delta t>0$ with $\nicefrac{1}{\Delta t}>\kappa_f$, the sequence $\{u^n\}_{n}\subset\WW$ generated by the iteration \eqref{eq:iterationW} satisfies the estimate
\begin{equation}\label{eq:stab}
\E(u^{n})-\E(u^{n+1})
\ge\gamma_f(\Delta t)\NN{u^{n+1}-u^n}^2_{\Delta t},
\end{equation}
where
\begin{equation*}
\gamma_f(\Delta t):=\min\left\{\frac{\nicefrac{1}{\Delta t}-\kappa_f}{\nicefrac{\Delta t}{\CP}+1},\frac{1}{2\Delta t}\right\}>0;
\end{equation*}
in particular, the sequence $\{\E(u^n)\}_{n}$ is monotone decreasing.
\end{proposition}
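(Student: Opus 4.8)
The plan is to turn one iteration step into a quantitative energy decrease by coupling the energy expansion of Lemma~\ref{lem:EE} with the variational characterisation of the scheme. Abbreviating $\delta:=u^{n+1}-u^n$, the first step is to evaluate the linear term $\dprod{\E'(u^n)}{\delta}$. Using the identity~\eqref{eq:Eid} with $\lambda=\Delta t$ and then inserting the iteration~\eqref{eq:iterationW}, i.e.\ $\l_{\Delta t}(u^n;\delta)=\B_{\Delta t}(u^{n+1},\delta)$, I would obtain
\[
\dprod{\E'(u^n)}{\delta}
=\frac{1}{\Delta t}\bigl(\B_{\Delta t}(u^n,\delta)-\l_{\Delta t}(u^n;\delta)\bigr)
=\frac{1}{\Delta t}\B_{\Delta t}(u^n-u^{n+1},\delta)
=-\frac{1}{\Delta t}\NN{\delta}_{\Delta t}^2 .
\]

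The decisive point is that the step size $\Delta t$ need not lie in $\G(\CC)$, so the remainder estimate of Lemma~\ref{lem:EE} cannot be used profitably at $\lambda=\Delta t$. I would therefore apply the energy expansion \emph{with a free parameter} $\lambda\in\G(\CC)$ and base point $u^n$,
\[
\E(u^n)-\E(u^{n+1})
=-\dprod{\E'(u^n)}{\delta}-\frac{1}{2\lambda}\NN{\delta}_\lambda^2-\Psi_\lambda(u^n,u^{n+1}).
\]
Substituting the identity above, bounding $-\Psi_\lambda\ge-\tfrac12\sigma_f(\lambda)\norm{\delta}_{\Lom{2}}^2$, exploiting $\sigma_f(\lambda)<\CC+\nicefrac{1}{\lambda}$ for $\lambda\in\G(\CC)$ (cf.~\eqref{eq:set}), and finally expanding both norms by~\eqref{eq:norm} so that the two gradient contributions collapse to $\tfrac12\norm{\nabla\delta}_{\Lom{2}}^2$, I expect to reach the mixed lower bound
\[
\E(u^n)-\E(u^{n+1})
\ge\frac12\norm{\nabla\delta}_{\Lom{2}}^2+\Bigl(\frac{1}{\Delta t}-\frac{1}{\lambda}-\frac{\CC}{2}\Bigr)\norm{\delta}_{\Lom{2}}^2 .
\]

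It then remains to dominate a multiple of $\NN{\delta}_{\Delta t}^2=\Delta t\norm{\nabla\delta}_{\Lom{2}}^2+\norm{\delta}_{\Lom{2}}^2$. Writing $c_\lambda:=\nicefrac{1}{\Delta t}-\nicefrac{1}{\lambda}-\nicefrac{\CC}{2}$, which may well be negative, I would compensate through the Poincar\'e inequality~\eqref{eq:PI0}: requiring $\tfrac12\norm{\nabla\delta}_{\Lom{2}}^2+c_\lambda\norm{\delta}_{\Lom{2}}^2\ge\gamma\NN{\delta}_{\Delta t}^2$ on the admissible cone $\norm{\delta}_{\Lom{2}}^2\le\CP\norm{\nabla\delta}_{\Lom{2}}^2$ reduces, upon testing the two extreme rays $\norm{\delta}_{\Lom{2}}=0$ and $\norm{\delta}_{\Lom{2}}^2=\CP\norm{\nabla\delta}_{\Lom{2}}^2$, to the pair of conditions $\gamma\le\nicefrac{1}{2\Delta t}$ and $\gamma\le(\tfrac12+\CP c_\lambda)/(\Delta t+\CP)$. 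I would then optimise in $\lambda$, letting $\lambda\uparrow\sup\G(\CC)$ so that $\nicefrac{1}{\lambda}\downarrow\mu_f$; invoking the elementary bound $\kappa_f\ge\mu_f+\nicefrac{\CC}{2}-\nicefrac{1}{2\CP}$ from the definition of $\kappa_f$, together with $\beta(\Delta t)=\CP/(\CP+\Delta t)$ from~\eqref{eq:beta}, turns the second condition into $\gamma\le(\nicefrac{1}{\Delta t}-\kappa_f)\beta(\Delta t)$. Choosing $\gamma=\gamma_f(\Delta t)=\min\{\nicefrac{1}{2\Delta t},(\nicefrac{1}{\Delta t}-\kappa_f)\beta(\Delta t)\}$ then yields~\eqref{eq:stab}, while the hypothesis $\nicefrac{1}{\Delta t}>\kappa_f$ guarantees $\gamma_f(\Delta t)>0$, whence $\{\E(u^n)\}_n$ is monotone decreasing.

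The main obstacle is precisely this decoupling of scales: the iteration rigidly produces the factor $\nicefrac{1}{\Delta t}$ in the linear term, whereas a controllable remainder forces the use of a separate $\lambda\in\G(\CC)$ in the expansion. Bridging the gap between the $\NN{\cdot}_\lambda$- and $\NN{\cdot}_{\Delta t}$-geometries via the Poincar\'e inequality is what generates both the weight $\beta(\Delta t)$ and the threshold $\kappa_f$ (through $\mu_f$, and hence $\sup\G(\CC)$); checking that the optimisation in $\lambda$ reproduces $\kappa_f$ exactly is the delicate book-keeping step.
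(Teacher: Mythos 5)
Your argument is correct and follows essentially the same route as the paper's proof: the energy expansion of Lemma~\ref{lem:EE} with an auxiliary $\lambda\in\G(\CC)$ decoupled from $\Delta t$, the identity $\dprod{\E'(u^n)}{\delta}=-\frac{1}{\Delta t}\NN{\delta}^2_{\Delta t}$ from~\eqref{eq:Eid} and~\eqref{eq:iterationW}, the same intermediate bound $\frac12\norm{\nabla\delta}^2_{\Lom{2}}+\bigl(\nicefrac{1}{\Delta t}-\nicefrac{1}{\lambda}-\nicefrac{\CC}{2}\bigr)\norm{\delta}^2_{\Lom{2}}$, and the limit $\lambda\nearrow\sup\G(\CC)$. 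The only (cosmetic) difference is the final absorption step, where you test a linear functional on the extreme rays of the Poincar\'e cone instead of the paper's explicit splitting via the constants $\eta_\lambda$ and $\epsilon_\lambda$; both yield the same $\gamma_f(\Delta t)$.
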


\begin{proof}
Let $\delta^n:=u^{n+1}-u^n$, for $n\ge 0$, and consider $\Delta t>0$ with $\nicefrac{1}{\Delta t}>\kappa_f$. Choose $\lambda\in\G(\CC)$ sufficiently large such that 
\begin{equation}\label{eq:20210903a}
\nicefrac{1}{\Delta t}>\max\left\{\nicefrac{1}{\lambda}+\nicefrac{\CC}{2}-\nicefrac{1}{2\CP},0\right\}\ge\kappa_f.
\end{equation}
Exploiting the energy expansion from Lemma \ref{lem:EE}, we have
\[
\E(u^{n+1})-\E(u^{n})
\le\dprod{\E'(u^n)}{\delta^n}+\frac{1}{2\lambda}\NN{\delta^n}^2_{\lambda}+\frac12\sigma_f(\lambda)\norm{\delta^n}^2_{\Lom{2}}.
\]
In addition, invoking \eqref{eq:Eid} and \eqref{eq:iterationW}, for any $w\in\WW$, we notice that 
\begin{equation}\label{eq:20210831a}
\dprod{\E'(u^n)}{w}
=\frac{1}{\Delta t}\left(\B_{\Delta t}(u^n,w)-\l_{\Delta t}(u^n;w)\right)
=\frac{1}{\Delta t}\B_{\Delta t}(u^n-u^{n+1},w)
=-\frac{1}{\Delta t}\B_{\Delta t}(\delta^n,w),
\end{equation}
and therefore
\[
\dprod{\E'(u^n)}{\delta^n}=-\frac{1}{\Delta t}\NN{\delta^n}^2_{\Delta t}.
\]
Combining the above, and applying the bound from~\eqref{eq:set}, yields
\begin{align*}
\E(u^{n})-\E(u^{n+1})
&\ge\frac{1}{\Delta t}\NN{\delta^n}^2_{\Delta t}-\frac{1}{2\lambda}\NN{\delta^n}^2_{\lambda}-\frac12\sigma_f(\lambda)\norm{\delta^n}^2_{\Lom{2}}\\
&=\frac{1}{2}\norm{\nabla\delta^n}^2_{\Lom{2}}
+\frac12\left(\frac{2}{\Delta t}-\frac{1}{\lambda}-\sigma_f(\lambda)\right)\norm{\delta^n}^2_{\Lom{2}}\\
&\ge\frac{1}{2}\norm{\nabla\delta^n}^2_{\Lom{2}}
+\frac12\left(\frac{2}{\Delta t}-\frac{2}{\lambda}-\CC\right)\norm{\delta^n}^2_{\Lom{2}}\\
&\ge\frac{1}{2}\norm{\nabla\delta^n}^2_{\Lom{2}}
+\left(\frac{1}{\Delta t}-\max\left\{\frac{1}{\lambda}+\frac{\CC}{2}-\frac{1}{2\CP},0\right\}-\frac{1}{2\CP}\right)\norm{\delta^n}^2_{\Lom{2}}.
\end{align*}
Defining the positive constants
\[
\eta_\lambda:=\nicefrac{1}{\Delta t}-\max\left\{\nicefrac{1}{\lambda}+\nicefrac{\CC}{2}-\nicefrac{1}{2\CP},0\right\}>0,\qquad
\epsilon_\lambda:=\min\left\{\frac{\eta_\lambda}{\nicefrac{\Delta t}{\CP}+1},\frac{1}{2\Delta t}\right\}>0,
\]
cf. \eqref{eq:20210903a}, it follows that
\begin{align*}
\E(u^{n})-\E(u^{n+1})
&\ge\epsilon_{\lambda}\Delta t\norm{\nabla\delta^n}^2_{\Lom{2}}
+\frac12\left(1-2\epsilon_{\lambda}\Delta t\right)\norm{\nabla\delta^n}^2_{\Lom{2}}
+\frac12\left(2\eta_\lambda-\nicefrac{1}{\CP}\right)\norm{\delta^n}^2_{\Lom{2}}.
\end{align*}
Then, using that $1-2\epsilon_{\lambda}\Delta t\ge 0$, and employing \eqref{eq:PI0}, we obtain
\begin{align*}
\E(u^{n})-\E(u^{n+1})
&\ge\epsilon_{\lambda}\Delta t\norm{\nabla\delta^n}^2_{\Lom{2}}
+\left(\eta_\lambda-\nicefrac{\epsilon_{\lambda}\Delta t}{\CP}\right)\norm{\delta^n}^2_{\Lom{2}}.
\end{align*}
Noticing that
\begin{align*}
\eta_\lambda-\frac{\epsilon_{\lambda}\Delta t}{\CP}
\ge\eta_\lambda-\frac{\Delta t}{\CP}\frac{\eta_\lambda}{\nicefrac{\Delta t}{\CP}+1}
=\frac{\eta_\lambda}{\nicefrac{\Delta t}{\CP}+1}
\ge\epsilon_\lambda,
\end{align*}
we infer the bound
$
\E(u^{n})-\E(u^{n+1})
\ge\epsilon_\lambda\NN{\delta^n}^2_{\Delta t}.
$
Finally, observing that
\[
\lim_{\lambda\nearrow\sup\G(\CC)}\epsilon_\lambda=\gamma_f(\Delta t),
\]
completes the proof.
\end{proof}

\begin{remark}\label{rem:decreasingf}
If there exists a constant $C_f>0$ such that, for almost every $\x \in \Omega$, it holds the bound
\begin{equation}\label{eq:Cf}
\sup_{u \in \mathbb{R}}\left|\frac{\partial f}{\partial u}(\x,u)\right|<C_f,
\end{equation}
then we may choose $\rho=C_f$ in Assumption~\ref{apt:A1}, and we obtain $\sup\G(C_f)=\infty$. In the context of Proposition~\ref{pr:stab} this yields $\kappa_f=\nicefrac12\max\left\{C_f-\nicefrac{1}{\CP},0\right\}$.
Conversely, if Assumption~\ref{apt:A1} is satisfied for some $\rho>0$, then, for any $\lambda\in\G(\rho)$, we have
\begin{equation}\label{eq:20211126a}
\sup_{u \in \mathbb{R}}\left|\frac{\partial f}{\partial u}(\x,u)\right|
\le\frac{1}{\lambda}+\sup_{u \in \mathbb{R}}\left|\frac{\partial f}{\partial u}(\x,u)+\frac{1}{\lambda}\right|\le\frac{1}{\lambda}+\sigma_f(\lambda)<\rho+\frac{2}{\lambda},
\end{equation}
for almost every $\x\in\Omega$, which is~\eqref{eq:Cf} with $C_f=\rho+\nicefrac{2}{\lambda}$.
\end{remark}



\begin{proposition}[Convergence of residual]\label{pr:res}
Let the assumptions of the previous Proposition~\ref{pr:stab} hold, and suppose that the sequence $\{\E(u^n)\}_{n}$ is bounded from below, where $\{u^n\}_{n}\subset\WW$ is generated by the iteration~\eqref{eq:iterationW}. Then 
\begin{align} \label{eq:dualconvergence}
\sup_{w \in \WW \setminus \{0\}} \frac{\abs{\dprod{\E'(u^n)}{w}}}{\NN{w}_{\Delta t}}\to0\qquad\text{as }n\to\infty,
\end{align}
i.e. $\lim_{n\to\infty}\E'(u^n) =0$ in $\WW'$.
\end{proposition}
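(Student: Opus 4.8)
The plan is to derive the convergence of the residual directly from the energy descent estimate \eqref{eq:stab} of Proposition~\ref{pr:stab}, combined with the explicit representation of $\E'(u^n)$ in terms of the iteration increment. The key observation, which is already implicit in the proof of Proposition~\ref{pr:stab}, is that the residual at step $n$ is nothing but a multiple of the $\B_{\Delta t}$-inner product with the increment $u^{n+1}-u^n$.

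First I would exploit the hypotheses to control the increments. By Proposition~\ref{pr:stab} the sequence $\{\E(u^n)\}_n$ is monotone decreasing, and by assumption it is bounded from below; hence it converges to a finite limit, and in particular the telescoping differences $\E(u^n)-\E(u^{n+1})$ tend to $0$ as $n\to\infty$. Invoking the descent bound \eqref{eq:stab} with the strictly positive constant $\gamma_f(\Delta t)$, this forces the increments to vanish in the energy norm, i.e. $\NN{u^{n+1}-u^n}_{\Delta t}\to 0$.

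Next I would connect the increment to the residual. Writing $\delta^n:=u^{n+1}-u^n$ and recalling the identity \eqref{eq:20210831a}, namely $\dprod{\E'(u^n)}{w}=-(\Delta t)^{-1}\B_{\Delta t}(\delta^n,w)$ for all $w\in\WW$ (which itself follows from \eqref{eq:Eid} together with the defining relation \eqref{eq:iterationW} for the iterates), I would use that $\B_{\Delta t}$ is precisely the inner product inducing the norm $\NN{\cdot}_{\Delta t}$. The Cauchy--Schwarz inequality then gives $\abs{\dprod{\E'(u^n)}{w}}\le(\Delta t)^{-1}\NN{\delta^n}_{\Delta t}\NN{w}_{\Delta t}$, so that the dual norm appearing on the left-hand side of \eqref{eq:dualconvergence} is bounded above by $(\Delta t)^{-1}\NN{u^{n+1}-u^n}_{\Delta t}$, uniformly in $w$.

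Combining the two steps yields the claim, since the latter quantity tends to $0$ by the first step. The argument is essentially mechanical once the residual is expressed through the increment, so I do not anticipate any serious obstacle; the only point meriting care is ensuring that monotonicity together with boundedness from below genuinely delivers vanishing energy increments, which is immediate once one notes that a convergent real sequence has increments tending to zero. The positivity of $\gamma_f(\Delta t)$, inherited from the hypotheses of Proposition~\ref{pr:stab}, is what makes the passage from vanishing energy increments to vanishing $\NN{\cdot}_{\Delta t}$-increments legitimate.
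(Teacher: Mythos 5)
Your argument is correct and coincides with the paper's own proof: both rest on the identity $\dprod{\E'(u^n)}{w}=-(\Delta t)^{-1}\B_{\Delta t}(\delta^n,w)$ together with Cauchy--Schwarz to bound the dual norm by $(\Delta t)^{-1}\NN{\delta^n}_{\Delta t}$, and then on the descent estimate \eqref{eq:stab} plus monotonicity and boundedness from below of $\{\E(u^n)\}_n$ to conclude that this bound vanishes. The only difference is cosmetic ordering of the two steps.
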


\begin{proof}
We recall the dual norm
\begin{equation}\label{eq:res}
\norm{\E'(u)}_{\WW^\prime}:=\sup_{w \in \WW \setminus \{0\}} \frac{\abs{\dprod{\E'(u)}{w}}}{\NN{w}_{\Delta t}},\qquad u\in\WW.
\end{equation}
Applying~\eqref{eq:20210831a}, we observe the bound
\begin{equation*}
|\dprod{\E'(u^n)}{w}|
\le\frac{1}{\Delta t}\NN{\delta^n}_{\Delta t}\NN{w}_{\Delta t}\qquad\forall w\in\WW.
\end{equation*}
Thus, using \eqref{eq:stab}, for $n\ge 0$, it follows that
\begin{align*}
\norm{\E'(u^n)}_{\WW^\prime} 
\leq \frac{1}{\Delta t} \NN{\delta^n}_{\Delta t} 
\leq \frac{1}{\Delta t}\left(\frac{\E(u^{n})-\E(u^{n+1})}{\gamma_f(\Delta t)}\right)^{\nicefrac12}.
\end{align*}
Since the sequence $\{\E(u^n)\}_{n}$ is decreasing (Proposition~\ref{pr:stab}) and, by assumption, bounded from below, we deduce that the right-hand side of the above estimate vanishes for $n \to \infty$, which yields the claim.
\end{proof}

\begin{remark}\label{rem:37}
The boundedness from below of the sequence $\{\E(u^n)\}_{n}$, as required in Proposition~\ref{pr:res}, is guaranteed, for instance, if $\E$ is a weakly coercive energy functional, cf.~\eqref{eq:wc}. Owing to Remark~\ref{rem:wc}, this structural property of $\E$ holds if Assumption~\ref{apt:A1} is fulfilled for some $0<\CC\le\nicefrac{1}{\CP}$, cf.~\S\ref{sc:special}, with the constant $\CP>0$ from~\eqref{eq:PI0}. We emphasise that this condition on $\rho$ is sufficient, however, not necessary for the weak coercivity of $\E$. For example, the nonlinearity $f(u)=\sin(u^2)$ renders the energy functional $\E$ weakly coercive, yet, there is no $\rho>0$ for which Assumption~\ref{apt:A1} can be satisfied in this case. Note also that even if Assumption~\ref{apt:A1} can be established, it is not generally possible to do so with small $\rho$; indeed, consider the function $f(u)=\varepsilon^{-1}\exp(-u^2)$, cf.~Experiment~\ref{exp:Lshape} in \S\ref{sc:ce} below, which requires $\rho$ to be arbitrarily large in \eqref{eq:set} if $\varepsilon\to 0$.
\end{remark}

With regards to the convergence in the norm $\NN{\cdot}_{\Delta t}$, we focus first on the iteration \eqref{eq:iteration}, i.e. for~$\WW=\VV$ in~\eqref{eq:iterationW}.

\begin{theorem}[Convergence---general case] \label{thm:alternative}
Let $\WW=\VV$, and $\Delta t>0$ as in Proposition~\ref{pr:stab}. Moreover, suppose that $f$ is continuous on $\overline{\Omega} \times \mathbb{R}$, and that the energy functional $\E$ is weakly coercive, cf.~\eqref{eq:wc}.
Then, for the sequence $\{u_n\}_{n}\subset\VV$ generated by the iterative scheme~\eqref{eq:iteration}, there is a subsequence which converges (strongly) to a solution $u^\star\in\VV$ of~\eqref{eq:poissonweak}.
\end{theorem}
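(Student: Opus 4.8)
The plan is to combine the residual decay of Proposition~\ref{pr:res} with a weak compactness argument in $\VV$, and then to upgrade the resulting weak limit to a strong one by exploiting, once more, that the residual tends to zero.

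First I would secure the hypotheses of Proposition~\ref{pr:res}. By Proposition~\ref{pr:stab} the sequence $\{\E(u^n)\}_n$ is monotone decreasing, hence bounded from above (by $\E(u^1)$, say). Since $\E$ is assumed weakly coercive, cf.~\eqref{eq:wc}, the sublevel set $\{v\in\VV:\E(v)\le\E(u^1)\}$ is bounded with respect to $\NN{\cdot}_{\Delta t}$, so that $\{u^n\}_n$ is bounded in $\VV$. On this bounded set the lower bound of Remark~\ref{rem:wc} shows that $\{\E(u^n)\}_n$ is also bounded from below, and therefore Proposition~\ref{pr:res} applies and yields $\E'(u^n)\to0$ in $\VV'$. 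Exploiting boundedness in the Hilbert space $\VV$, I would then extract a subsequence $\{u^{n_k}\}_k$ with $u^{n_k}\rightharpoonup u^\star$ weakly in $\VV$; invoking the compact embedding $\H^1_0(\Omega)\hookrightarrow\Lom{2}$ (Rellich--Kondrachov), I may further assume $u^{n_k}\to u^\star$ strongly in $\Lom{2}$ and, after one more extraction, a.e.~in $\Omega$ with a fixed majorant in $\Lom{2}$.

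Next I would pass to the limit in the representation~\eqref{eq:energyderivative},
\[
\dprod{\E'(u^{n_k})}{v}=\int_\Omega\nabla u^{n_k}\cdot\nabla v\dx-\int_\Omega f(\x,u^{n_k})v\dx,\qquad v\in\VV,
\]
for fixed $v$. The left-hand side vanishes as $k\to\infty$ because $\E'(u^{n_k})\to0$ in $\VV'$, while the linear term converges to $\int_\Omega\nabla u^\star\cdot\nabla v\dx$ by weak convergence of the gradients. For the nonlinear term I would use the continuity of $f$ together with its linear growth: testing~\eqref{eq:dg} against $v=0$ gives $|f(\x,u)|\le|f(\x,0)|+(\sigma_f(\lambda)+\nicefrac{1}{\lambda})|u|$, so that $\{f(\cdot,u^{n_k})\}_k$ is bounded in $\Lom{2}$; combining a.e.~convergence of $u^{n_k}$ with continuity of $f$ and the $\Lom{2}$-majorant, dominated convergence gives $\int_\Omega f(\x,u^{n_k})v\dx\to\int_\Omega f(\x,u^\star)v\dx$. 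Hence $\dprod{\E'(u^\star)}{v}=0$ for all $v\in\VV$, i.e.~$u^\star$ solves~\eqref{eq:poissonweak}.

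Finally, to promote this to strong convergence I would test the residual with $v=u^{n_k}-u^\star$. Since $\E'(u^{n_k})\to0$ in $\VV'$ and $\{u^{n_k}-u^\star\}_k$ is bounded in $\VV$, we have $\dprod{\E'(u^{n_k})}{u^{n_k}-u^\star}\to0$; in this pairing the nonlinear contribution $\int_\Omega f(\x,u^{n_k})(u^{n_k}-u^\star)\dx$ vanishes by Cauchy--Schwarz (a factor bounded in $\Lom{2}$ times one tending to zero in $\Lom{2}$), leaving $\int_\Omega\nabla u^{n_k}\cdot\nabla(u^{n_k}-u^\star)\dx\to0$. Together with $\int_\Omega\nabla u^{n_k}\cdot\nabla u^\star\dx\to\norm{\nabla u^\star}^2_{\Lom{2}}$ (weak gradient convergence), this forces $\norm{\nabla u^{n_k}}_{\Lom{2}}\to\norm{\nabla u^\star}_{\Lom{2}}$, and weak convergence plus convergence of norms yields $\nabla u^{n_k}\to\nabla u^\star$ strongly in $\Lom{2}$; hence $\NN{u^{n_k}-u^\star}_{\Delta t}\to0$. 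I expect the main obstacle to be the passage to the limit in the nonlinear term $\int_\Omega f(\x,u^{n_k})v\dx$: this is exactly where the continuity of $f$ and the linear growth encoded in~\eqref{eq:dg}---supplying a.e.~convergence via Rellich and an $\Lom{2}$-domination---are indispensable, since the map $u\mapsto\int_\Omega f(\x,u)v\dx$ is not weakly continuous without such compactness.
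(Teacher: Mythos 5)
Your argument is correct, and it reaches the conclusion by a genuinely different route than the paper. The paper establishes the boundedness of $\{u^n\}_n$ and the decay $\E'(u^n)\to0$ in $\VV'$ exactly as you do, but then obtains the strongly convergent subsequence in one stroke from Lemma~\ref{lem:mp}, a Palais--Smale-type compactness result imported from mountain-pass theory (via \cite[Prop.~B.35]{R86}, which is where the continuity and linear growth of $f$ enter); only afterwards does it identify the limit as a solution, using the Lipschitz continuity of $\E'$ from Lemma~\ref{lem:aux2}. You instead unfold that compactness argument by hand: weak compactness plus Rellich gives a weak limit, you identify it as a solution of~\eqref{eq:poissonweak} by passing to the limit in the two terms of~\eqref{eq:energyderivative}, and you then upgrade weak to strong convergence by testing the vanishing residual against $u^{n_k}-u^\star$ and using that weak convergence together with convergence of gradient norms forces strong convergence in the Hilbert space. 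All the individual steps check out: the linear growth bound you extract from~\eqref{eq:dg} is exactly~\eqref{eq:20211126a}, the sublevel-set boundedness follows from weak coercivity, and the explicit lower bound of Remark~\ref{rem:wc} is indeed finite on bounded sets even when its leading coefficient is negative. Two minor observations: for the nonlinear term you could bypass the a.e.-convergence/dominated-convergence detour entirely by invoking the Lipschitz estimate~\eqref{eq:flipschitz} together with strong $\L^2$ convergence, as the paper does in the proof of Theorem~\ref{thm:convW}; and your proof in fact never uses the joint continuity of $f$ on $\overline\Omega\times\mathbb{R}$ (continuity in the second argument, already implied by Assumption~\ref{apt:A1}~(ii), suffices), so your route is self-contained and proves the theorem under nominally weaker hypotheses, at the price of a somewhat longer argument than the paper's citation of the compactness lemma.
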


For the proof of the above result, we require the following compactness lemma from the mountain pass theory, see, e.g.~\cite{R86} for details.

\begin{lemma}\label{lem:mp}
Let $f$ in~\eqref{eq:poisson} be continuous on $\overline\Omega\times\mathbb{R}$, and Assumption~\ref{apt:A1} be satisfied for some $\CC>0$. Moreover, consider a bounded sequence $\{v^n\}_{n}$ in $\VV$ with $\E'(v^n)\to 0$ as $n\to\infty$ in $\VV'$. Then $\{v^n\}_{n}$ has a (strongly) converging subsequence in $\VV$.
\end{lemma}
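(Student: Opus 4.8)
The plan is to combine the compact Sobolev embedding with the structural bounds on $\E'$ in order to upgrade weak convergence to strong convergence in $\VV$. First I would fix some $\lambda\in\G(\CC)$, which is non-empty by Assumption~\ref{apt:A1}, so that the uniform Lipschitz bound~\eqref{eq:dg} and its consequence~\eqref{eq:flipschitz} are available. Since $\VV=\H^1_0(\Omega)$ is a Hilbert space, hence reflexive, and $\{v^n\}_n$ is bounded, I would extract a subsequence (not relabelled) with $v^n\rightharpoonup v^\star$ weakly in $\VV$ for some $v^\star\in\VV$. By the Rellich--Kondrachov theorem the embedding $\VV\hookrightarrow\Lom{2}$ is compact on the bounded domain~$\Omega$, so after passing to a further subsequence I may assume that $v^n\to v^\star$ \emph{strongly} in $\Lom{2}$; in particular, $\{v^n\}_n$ is a Cauchy sequence in $\Lom{2}$.

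The crucial step is an algebraic identity obtained from the representation of $\E'$ in~\eqref{eq:energyderivative}: for any indices $n,m$, testing with $w=v^n-v^m$ and rearranging yields
\begin{align*}
\norm{\nabla(v^n-v^m)}^2_{\Lom{2}}
&=\dprod{\E'(v^n)-\E'(v^m)}{v^n-v^m}
+\int_\Omega\bigl(f(\x,v^n)-f(\x,v^m)\bigr)(v^n-v^m)\dx.
\end{align*}
It then suffices to show that both terms on the right-hand side tend to $0$ as $n,m\to\infty$. For the first term, the hypothesis $\E'(v^n)\to 0$ in $\VV'$ together with the boundedness of $\{v^n\}_n$ gives
\[
\bigl|\dprod{\E'(v^n)-\E'(v^m)}{v^n-v^m}\bigr|
\le\bigl(\norm{\E'(v^n)}_{\VV'}+\norm{\E'(v^m)}_{\VV'}\bigr)\NN{v^n-v^m}_{\lambda}\to 0,
\]
since $\NN{v^n-v^m}_{\lambda}$ stays uniformly bounded. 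For the second term, the uniform Lipschitz estimate~\eqref{eq:flipschitz}, applied with $u=v^n$, $v=v^m$ and $w=v^n-v^m$, yields
\[
\left|\int_\Omega\bigl(f(\x,v^n)-f(\x,v^m)\bigr)(v^n-v^m)\dx\right|
\le\Bigl(\sigma_f(\lambda)+\tfrac{1}{\lambda}\Bigr)\norm{v^n-v^m}^2_{\Lom{2}}\to 0,
\]
because $\{v^n\}_n$ is Cauchy in $\Lom{2}$ by the previous step.

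Combining the two estimates shows $\norm{\nabla(v^n-v^m)}_{\Lom{2}}\to 0$; together with $\norm{v^n-v^m}_{\Lom{2}}\to 0$ this gives $\NN{v^n-v^m}_{\lambda}\to 0$, i.e. $\{v^n\}_n$ is Cauchy with respect to the complete norm $\NN{\cdot}_{\lambda}$. Hence the subsequence converges strongly in $\VV$, necessarily to the weak limit $v^\star$, which is the assertion. I expect the main conceptual point to be the passage from weak to strong $\Lom{2}$ convergence via the \emph{compact} embedding $\VV\hookrightarrow\Lom{2}$: this is precisely what decouples the nonlinear term and drives the Cauchy argument. I note in passing that the global Lipschitz bound~\eqref{eq:flipschitz}, stemming from the asymptotically linear growth encoded in Assumption~\ref{apt:A1}, renders the continuity of $f$ superfluous in this particular argument; continuity could alternatively be used to pass to the limit in the nonlinear term through almost-everywhere convergence and dominated convergence, but the Lipschitz route is more direct.
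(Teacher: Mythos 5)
Your proof is correct, but it follows a genuinely different route from the paper's. The paper's proof is a two-line reduction: it uses the continuity and differentiability of $f$ together with~\eqref{eq:20211126a} to verify the uniform linear growth bound $|f(\x,u)|\le\sup_{\x\in\Omega}|f(\x,0)|+(\rho+\nicefrac{2}{\lambda})|u|$, and then invokes the standard Palais--Smale compactness result \cite[Prop.~B.35]{R86} as a black box. You instead write out a self-contained compactness argument: extract a weakly convergent subsequence, upgrade to strong $\Lom{2}$-convergence via Rellich--Kondrachov, and then use the identity
\begin{equation*}
\norm{\nabla(v^n-v^m)}^2_{\Lom{2}}
=\dprod{\E'(v^n)-\E'(v^m)}{v^n-v^m}
+\int_\Omega\bigl(f(\x,v^n)-f(\x,v^m)\bigr)(v^n-v^m)\dx
\end{equation*}
to show the subsequence is Cauchy in $\VV$, controlling the first term by $\E'(v^n)\to0$ in $\VV'$ plus boundedness, and the second by the Lipschitz bound~\eqref{eq:flipschitz} plus the $\Lom{2}$-Cauchy property. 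Both steps are sound, and the mechanism (compact embedding decoupling the nonlinearity) is the same one hiding inside the cited proposition. What your version buys is self-containedness and a sharper hypothesis: as you observe, the global Lipschitz structure from Assumption~\ref{apt:A1} makes the continuity of $f$ in $\x$ superfluous, whereas the paper needs continuity precisely because the external reference assumes it. What the paper's version buys is brevity and a pointer to the standard mountain-pass literature. One small presentational remark: you should state explicitly that the hypothesis $\E'(v^n)\to0$ in $\VV'$ passes to the subsequence (trivially true, but it is the point where the full-sequence hypothesis enters), and that the limit of the Cauchy subsequence coincides with the weak limit $v^\star$ by uniqueness of weak limits -- both of which you do address, so no gap remains.
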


\begin{proof}
We establish a (uniform) linear growth bound on $f$ with respect to the second argument; then, the proof follows from \cite[Prop.~B.35]{R86}. In fact, for $u\in\mathbb{R}$ and any $\x\in\Omega$, by the continuity of $f$ and its differentiability in the second argument, cf.~Assumption~\ref{apt:A1}~(ii), we have
\[
f(\x,u)=f(\x,0)+\int_0^1\frac{\d}{\ds}f(\x,su)\ds
=f(\x,0)+u\int_0^1\partial_uf(\x,su)\ds.
\]
Then, for $\lambda\in\G(\rho)$, recalling~\eqref{eq:20211126a}, we deduce that
$
|f(\x,u)|
\le\sup_{\x\in\Omega}|f(\x,0)|
+\left(\rho+\nicefrac{2}{\lambda}\right)|u|,
$
which is the required estimate; cf.~\cite[p.~9, $(p_2)$]{R86}.
\end{proof}

\begin{proof}[Proof of Theorem~\ref{thm:alternative}]
First notice that $\mathcal{E}=\{\E(u^n)\}_{n}$ is a bounded sequence. Indeed, from Proposition~\ref{pr:stab}, recalling that $\mathcal{E}$ is monotone decreasing, we infer that $\mathcal{E}$ is bounded from above. Moreover, from the weak coercivity of $\E$, we deduce that $\mathcal{E}$ is bounded from below, cf.~Remark~\ref{rem:37}; in addition, owing to Proposition~\ref{pr:res}, this observation implies that $\E'(u^n)\to0$ in $\VV^\prime$ as $n\to\infty$.

Combining the boundedness of $\mathcal{E}$ and the weak coercivity of $\E$, we see that the sequence $\{u^n\}_{n}$ is bounded in $\VV$. Then, due to Lemma~\ref{lem:mp}, we establish that $\{u^n\}_{n}$ has a (strongly) convergent subsequence $\{u^{n'}\}_{n'}$ in $\VV$, with a limit $u^\star\in\VV$, i.e.
\begin{equation*}
\lim_{n'\to\infty}\NN{u^\star-u^{n'}}_{\Delta t}=0.
\end{equation*}
Thus, for fixed $v\in\VV$, using Lemma~\ref{lem:aux2}, we note that
\[
\left|\dprod{\E'(u^\star)-\E'(u^{n'})}{v}\right|
\le L_{\E'}(\lambda) \NN{u^\star-u^{n'}}_{\lambda} \NN{v}_{\lambda}\to0,
\]
for $n'\to\infty$, with $L_{\E'}(\lambda)$ from~\eqref{eq:L}. 
Hence, involving Proposition~\ref{pr:res}, we conclude that
\[
\dprod{\E'(u^\star)}{v}
=\dprod{\E'(u^{n'})}{v}+\dprod{\E'(u^\star)-\E'(u^{n'})}{v}\to0,
\]
for $n'\to\infty$. It follows that $u^\star$ solves the weak formulation~\eqref{eq:poissonweak}. 
\end{proof}   

Now let us consider the situation where $\WW\subset\VV$ is a closed subspace.

\begin{theorem}[Convergence in closed subspaces]\label{thm:convW}
Let $\WW\subseteq\VV$ be a closed linear subspace, and $\Delta t>0$ as in Proposition~\ref{pr:stab}. If $\E$ is weakly coercive, cf.~\eqref{eq:wc}, then the sequence $\{u^n\}_{n}\subset\WW$ generated by the iteration~\eqref{eq:iterationW} has a subsequence $\{u^{n'}\}_{n'}$ that converges weakly in $\WW$ and strongly in $\L^2(\Omega)$ to a solution of~\eqref{eq:weakW}.
\end{theorem}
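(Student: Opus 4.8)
The plan is to follow the architecture of the proof of Theorem~\ref{thm:alternative}, but to account for the fact that $\WW$ is now a proper closed subspace. The crucial difference is that Proposition~\ref{pr:res} only delivers $\E'(u^n)\to 0$ in the restricted dual $\WW'$, and \emph{not} in $\VV'$; consequently the mountain-pass compactness Lemma~\ref{lem:mp}, which requires convergence of the residual in the full dual $\VV'$, is no longer at our disposal to upgrade to strong convergence in $\VV$. I would therefore replace that compactness input by two softer ingredients: the weak sequential compactness of bounded sets in the Hilbert space $\WW$, and the compact Sobolev embedding $\VV=\H_0^1(\Omega)\hookrightarrow\Lom{2}$ (Rellich--Kondrachov).

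First I would reproduce the opening of the proof of Theorem~\ref{thm:alternative}: by Proposition~\ref{pr:stab} the energy sequence $\{\E(u^n)\}_n$ is monotone decreasing, hence bounded above, while weak coercivity (cf.~Remark~\ref{rem:37} and~\eqref{eq:wc}) bounds it below; it is therefore bounded. This makes Proposition~\ref{pr:res} applicable, yielding $\E'(u^n)\to 0$ in $\WW'$, and, together with weak coercivity, it forces $\{u^n\}_n$ to be bounded in $\VV$ and hence in the closed subspace $\WW$. Since $\WW$ is a closed subspace of the Hilbert space $(\VV,\B_{\Delta t})$, it is itself Hilbert, so the bounded sequence admits a subsequence with $u^{n'}\rightharpoonup u^\star$ weakly in $\WW$; as $\WW$ is weakly closed, $u^\star\in\WW$. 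Passing to a further subsequence (not relabelled) and invoking the compactness of $\VV\hookrightarrow\Lom{2}$, I would also secure $u^{n'}\to u^\star$ strongly in $\Lom{2}$ (the strong $\Lom{2}$-limit coincides with $u^\star$ since weak $\WW$-convergence entails weak $\Lom{2}$-convergence). These are precisely the two convergence modes claimed.

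To identify $u^\star$ as a solution of~\eqref{eq:weakW}, I would fix $w\in\WW$ and decompose $\dprod{\E'(u^\star)}{w}=\dprod{\E'(u^{n'})}{w}+\dprod{\E'(u^\star)-\E'(u^{n'})}{w}$. The first term vanishes by Proposition~\ref{pr:res}. Splitting the second via~\eqref{eq:energyderivative} into its Dirichlet and reaction parts, the Dirichlet part $\int_\Omega\nabla(u^\star-u^{n'})\cdot\nabla w\dx$ tends to zero because $u^{n'}\rightharpoonup u^\star$ weakly in $\H_0^1(\Omega)$ and $\nabla w$ is a fixed $\Lom{2}$-function, whereas the reaction part is bounded, using~\eqref{eq:flipschitz}, by $(\sigma_f(\lambda)+\nicefrac{1}{\lambda})\norm{u^\star-u^{n'}}_{\Lom{2}}\norm{w}_{\Lom{2}}\to 0$ thanks to the strong $\Lom{2}$-convergence. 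Hence $\dprod{\E'(u^\star)}{w}=0$ for all $w\in\WW$, which is exactly~\eqref{eq:weakW}.

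I expect the passage to the limit in the nonlinear reaction term to be the main obstacle: weak $\WW$-convergence alone does not control $\int_\Omega f(\x,u^{n'})w\dx$, and it is precisely the strong $\Lom{2}$-convergence extracted from the compact embedding, in tandem with the global $\Lom{2}$-Lipschitz bound~\eqref{eq:flipschitz} on $f$, that closes the argument. Notably, in contrast to Theorem~\ref{thm:alternative}, continuity of $f$ on $\overline{\Omega}\times\mathbb{R}$ is not needed here, since Lemma~\ref{lem:mp} is bypassed entirely.
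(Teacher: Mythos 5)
Your proof is correct and follows essentially the same route as the paper's: boundedness of $\{u^n\}_n$ from the energy decay together with weak coercivity, weak sequential compactness in the Hilbert subspace $\WW$, the compact embedding into $\Lom{2}$ for a further subsequence, and then passage to the limit in the nonlinear term via the Lipschitz bound~\eqref{eq:flipschitz} combined with Proposition~\ref{pr:res}. The only difference is presentational: you spell out the splitting of $\dprod{\E'(u^\star)-\E'(u^{n'})}{w}$ into Dirichlet and reaction parts, which the paper leaves implicit.
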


\begin{proof}
By the same argument as in the proof of Theorem~\ref{thm:alternative}, we note that $\{u^n\}_{n}$ is a bounded sequence in $\WW$. Since $\WW$ is a Hilbert (sub)space, and thus reflexive, there exists a subsequence $\{u^{n'}\}_{n'}$ and $u^\star\in\WW$ such that $u^{n'}\rightharpoonup u^\star$ in $\WW$ as $n'\to\infty$, i.e., in particular,
\[
\lim_{n'\to\infty}\int_\Omega\nabla(u^\star-u^{n'})\cdot\nabla w\dx=0\qquad\forall w\in\WW.
\] 
Furthermore, we note that $\WW\hookrightarrow \Lom{2}$ is a compact embedding (in dimensions $d=1,2,3$), i.e. there is a subsubsequence (for simplicity, still denoted by $\{u^{n'}\}_{n'}$) such that $u^{n'}\to u^\star$ strongly in $\Lom{2}$ as $n'\to\infty$. Hence, invoking the Lipschitz continuity~\eqref{eq:flipschitz}, we obtain
\[
\lim_{n'\to\infty}\left|\int_\Omega\left(f(\x,u^{n'})-f(\x,u^\star)\right)w\dx\right| \leq \left(\sigma_f(\Delta t)+\frac{1}{\Delta t}\right)\lim_{n'\to \infty} \norm{u^{n'}-u^\star}_{\L^2(\Omega)}\norm{w}_{\L^2(\Omega)}
=0,
\]
for all $w \in \WW$, and thus, thanks to Proposition~\ref{pr:res}, $\dprod{\E'(u^\star)}{w}=0$ for all $w\in\WW$.
\end{proof}

Finally, for the purpose of numerical approximations to be studied in \S\ref{sec:adaptiveIP}, we discuss the case where $\WW\subset\VV$ is a Galerkin subspace. The subsequent result is a straightforward consequence of the fact that weak and strong convergence are interchangeable in finite-dimensional spaces.

\begin{cor}[Convergence in discrete spaces]\label{cor:W}
If the linear subspace $\WW$ is finite-dimensional, i.e. $\dim\WW<\infty$, then the subsequence $\{u^{n'}\}_{n'}$ from Theorem~\ref{thm:convW} converges strongly in $\WW$ to a solution $u^\star$ of~\eqref{eq:weakW}.
\end{cor}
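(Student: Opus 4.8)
The plan is to leverage the strong $\Lom{2}$-convergence already provided by Theorem~\ref{thm:convW} together with the equivalence of all norms on a finite-dimensional space. From Theorem~\ref{thm:convW} we are handed a subsequence $\{u^{n'}\}_{n'}$ satisfying $u^{n'}\rightharpoonup u^\star$ weakly in $\WW$ and $u^{n'}\to u^\star$ strongly in $\Lom{2}$, where the limit $u^\star\in\WW$ already solves~\eqref{eq:weakW}. It therefore only remains to upgrade the mode of convergence within $\WW$ from weak to strong; no new information about $u^\star$ itself needs to be extracted.

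First I would observe that, restricted to the finite-dimensional space $\WW\subseteq\VV=\H^1_0(\Omega)\subset\Lom{2}$, the $\Lom{2}$-norm is a genuine norm, since the embedding $\WW\hookrightarrow\Lom{2}$ is injective (a function in $\WW$ vanishing in $\Lom{2}$ is the zero element). Because $\dim\WW<\infty$, all norms on $\WW$ are equivalent; in particular, there exists a constant $C>0$, depending only on $\WW$ and $\Delta t$, with $\NN{v}_{\Delta t}\le C\norm{v}_{\Lom{2}}$ for every $v\in\WW$. Applying this to $v=u^{n'}-u^\star\in\WW$ and invoking the strong $\Lom{2}$-convergence then yields
\[
\NN{u^{n'}-u^\star}_{\Delta t}\le C\norm{u^{n'}-u^\star}_{\Lom{2}}\to0\qquad\text{as }n'\to\infty,
\]
which is precisely the asserted strong convergence in $\WW$.

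I do not anticipate any genuine obstacle here: the single point meriting care is confirming that the $\Lom{2}$-norm separates points of $\WW$, so that the finite-dimensional norm-equivalence is applicable, and this is immediate from $\WW\subset\VV$. As an alternative route one could argue directly that weak convergence in a finite-dimensional space coincides with norm convergence—by testing the weak limit against a dual basis of $\WW^\prime$ and using that the finitely many coordinate functionals determine the $\NN{\cdot}_{\Delta t}$-norm—but passing through the strong $\Lom{2}$-estimate already furnished by Theorem~\ref{thm:convW} is the most economical way to conclude.
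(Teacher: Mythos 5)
Your argument is correct and is essentially the paper's: the paper disposes of the corollary with the one-line observation that weak and strong convergence coincide on finite-dimensional spaces, while you reach the same conclusion by applying finite-dimensional norm equivalence to the strong $\L^{2}$-convergence already supplied by Theorem~\ref{thm:convW} (and you note the paper's route as your alternative). Your one point of care --- that $\|\cdot\|_{\L^{2}(\Omega)}$ separates points of $\WW\subset\H^1_0(\Omega)$ --- is handled correctly, so nothing is missing.
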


\section{Adaptive iterative linearised Galerkin method} \label{sec:adaptiveIP}

In this section, we will present and analyse an adaptive algorithm that exploits an interplay of the iterative linearisation procedure~\eqref{eq:iteration} and abstract adaptive Galerkin discretisations thereof. To this end, we consider a sequence of finite-dimensional Galerkin subspaces $\{\VV_N\}_N \subset \VV$ with  the  hierarchy  property $\VV_0 \subset \VV_1 \subset \VV_2 \subset \dotsc \subset \VV$. The sequence obtained by the iteration scheme~\eqref{eq:iterationW} on $\WW=\VV_N$ will be denoted by $\{u_N^n\}_{n}$. 


\subsection{Components of the adaptive procedure}

The proposed adaptive algorithm is based on three components, which we will outline in the sequel.

\subsubsection*{{\rm(i)} Iterative linearisation:}
On a given discrete subspace $\WW:=\VV_N \subset \VV$ we perform the iterative linearisation scheme~\eqref{eq:iterationW} until the discrete approximation $u_N^n\in\VV_N$ is close enough to a solution of the corresponding  discrete weak problem~\eqref{eq:weakW}. In order to evaluate the quality of a discrete approximation, for $\lambda > 0$ and $u\in\VV_N$, we introduce the discrete residual $\R^\lambda_N(u)\in \VV_N$ through the weak formulation
\begin{equation}\label{eq:RN}
\B_\lambda(\R^\lambda_N(u),v)=\dprod{\E'(u)}{v}\qquad \forall v\in \VV_N.
\end{equation}
We note that $u \in \VV_N$ is a solution of~\eqref{eq:weakW} if and only if $\R^\lambda_N(u)=0$ in $\VV_N$, cf.~\eqref{eq:eprimeprob}. 

We stop the iteration~\eqref{eq:iterationW} on the current Galerkin space $\VV_N$ as soon as $\NN{\R^\lambda_N(u_N^n)}_{\lambda}$ is deemed small enough; the final iteration number on the space $\VV_N$ is denoted by $n^\star=n^\star(N)$, and the corresponding approximation by
\[
\u{}:=u^{n^\star(N)}_N\in\VV_N,\qquad N\ge 0.
\]
In order to derive a stopping criterion that is easily computable, under Assumption~\ref{apt:A1}, for suitable $\rho>0$ and $\Delta t\in\G(\CC)$, we employ~\eqref{eq:20210831a} and \eqref{eq:RN} to observe that
\begin{align*}
\frac{1}{\Delta t}\B_{\Delta t}(\delta_N^n,v)=-\dprod{\E'(u_N^n)}{v}
=-\B_{\Delta t}(\R^{\Delta t}_N(u^n_N),v) \qquad \forall \, v \in \VV_N,
\end{align*}
where $\delta_N^n:=u_N^{n+1}-u_N^n$. By the coercivity of the bilinear form $\B_{\Delta t}$ this yields the representation
\begin{equation}\label{eq:R}
\R^{\Delta t}_N(u^n_N)=\frac{1}{\Delta t}(u_N^{n}-u_N^{n+1}).
\end{equation}
In particular, this identity shows that the residual $\R^{\Delta t}_N(u_N^n)$ is a \emph{linearisation indicator}, which allows to monitor the performance of the linearised iteration~\eqref{eq:iterationW} on the present Galerkin space~$\VV_N$.

\subsubsection*{{\rm(ii)} Decision between linearisation and discretisation:} 

Provided that the conditions of Proposition~\ref{pr:stab} hold for suitable $\rho>0$ and $\Delta t\in\G(\CC)$, we deduce from the bound~\eqref{eq:stab}, i.e.
\begin{align*}
\gamma_f(\Delta t)\NN{u^{n-1}_N-u^n_N}^2_{\Delta t}
\le\E(u^{n-1}_N)-\E(u^{n}_N),\qquad n\ge 0,
\end{align*}
that the linearisation indicator from (i) becomes relatively small once the energy decay on the current Galerkin space $\VV_N$ levels off, or, in other terms, if the overall energy decay on $\VV_N$, defined by
\begin{align} \label{eq:GFIincrement}
	\Phi^n_N(\Delta t):= \E(u_N^{0}) -  \E(u_N^{n}),\qquad n\ge 0,
\end{align}
is large in comparison to (the square of) the iteration update in each step.
We express this situation by a bound of the form
\begin{equation}\label{eq:alpha}
\frac{1}{\Delta t}\NN{u^{n-1}_N-u^n_N}_{\Delta t}
\le\alpha\Phi^n_N(\Delta t),
\end{equation}
for an appropriate constant $0<\alpha<1$ (independent of $N$ and $n$). If $\Phi^1_N(\Delta t)=0$, then we note that $u_N^1=u_N^0$ is a solution of~\eqref{eq:weakW}. Otherwise, if $\Phi^1_N(\Delta t)>0$, for given $N \in \mathbb{N}$, and the conditions of Proposition~\ref{pr:res} hold, then we remark that~\eqref{eq:alpha} is satisfied whenever $n$ is large enough; indeed, \eqref{eq:dualconvergence} states that
\[
\frac{1}{\Delta t}\NN{u^n_N-u^{n-1}_N}_{\Delta t}
=\NN{\R_N^{\Delta t}(u_N^{n-1})}_{\Delta t}=\sup_{v\in\VV_N\setminus\{0\}}\frac{\dprod{\E'(u_N^{n-1})}{v}}{\NN{v}_{\Delta t}} \to 0 \quad \text{as} \  n \to \infty,
\] 
whereas $\Phi^n_N(\Delta t)\geq \Phi^1_N(\Delta t)>0$ for all $n \geq 1$.

\subsubsection*{{\rm(iii)} Adaptive Galerkin space enrichments:}
The construction of the discrete Galerkin spaces is based on the assumption that we have at our disposal an adaptive strategy that is able to identify some local information on the error, and thereby, to appropriately enrich the current Galerkin space $\VV_N$ once the norm of the discrete residual is sufficiently small, cf.~(i).
To this end, for the iteration~\eqref{eq:iterationW} on the space $\VV_N$, we decompose the dual norm of the PDE residual associated to the weak form~\eqref{eq:poissonweak}, cf.~\eqref{eq:res}, into the two parts
\begin{align*}
\norm{\E'(u^n_N)}_{\VV'}
=\mathcal{E}^{\mathrm{it}}_N(u^n_N)+
\mathcal{E}^{\mathrm{dis}}_N(u^n_N),\qquad n\ge 0,
\end{align*}
where, upon involving~\eqref{eq:RN} and~\eqref{eq:R}, the term
\[
\mathcal{E}^{\mathrm{it}}_N(u^n_N):
=\norm{\E'(u^n_N)}_{\VV_N'}
=\NN{\R^{\Delta t}_N(u_N^n)}_{\Delta t}
=\frac{1}{\Delta t}\NN{u_N^n-u_{N}^{n+1}}_{\Delta t}
\]
takes the role of an iteration error on $\VV_N$, which vanishes for $n\to\infty$, cf.~Proposition~\ref{pr:res}, and
\begin{equation*}
\mathcal{E}^{\mathrm{dis}}_N(u^n_N):=\norm{\E'(u^n_N)}_{\VV'}-\norm{\E'(u^n_N)}_{\VV_N'}
\end{equation*}
is a discretisation part, with
\begin{equation} \label{eq:auxrem}
\lim_{n\to\infty}\left(\mathcal{E}^{\mathrm{dis}}_N(u^n_N)-\norm{\E'(u^n_N)}_{\VV'}\right)=0.
\end{equation}
We suppose that the space $\VV_N$ is refined in such a way that the discretisation part of the residual for the final approximation $\u{}=u_N^{n^\star(N)} \in \VV_N$ in a (hierarchically) enriched space $\VV_{N+1}$ is reduced by a uniform factor $0<q<1$, i.e.
\begin{align}\label{eq:disq}
\mathcal{E}^{\mathrm{dis}}_{N+1}(\u{})
=\mathcal{E}^{\mathrm{dis}}_{N+1}(u_{N+1}^0)
\le q\, \mathcal{E}^{\mathrm{dis}}_N(\u{});
\end{align}
the initial guess on the new subspace $u_{N+1}^0 \in\VV_{N+1}$ is defined through the canonical embedding of the final approximation $\u{}\in \VV_N\hookrightarrow\VV_{N+1}$.

\begin{remark}
Based on observing~\eqref{eq:auxrem}, for $n^\star(N)$ sufficiently large, the contraction assumption~\eqref{eq:disq} could be replaced by
\begin{equation} \label{eq:remarkcontraction}
\widetilde q\norm{\E'(\u{})}_{\VV'}\le\norm{\E'(\u{})}_{\VV_{N+1}'},
\end{equation}
with a constant $0<\widetilde q<1$. Indeed, if we imposed \eqref{eq:remarkcontraction} in place of~\eqref{eq:disq} in Theorem~\ref{thm:ILG} below, the statement would remain valid. Thereby, $\widetilde{q}$ would simply replace $(1-q)$ in the proof, cf.~\eqref{eq:replace}.
\end{remark}

\begin{algorithm}
\caption{Adaptive iterative linearised Galerkin algorithm}
\label{alg:two}
\begin{algorithmic}[1]
\State Prescribe the adaptivity parameter~$\alpha\in(0,1)$. 
\State Input an initial Galerkin space $\VV_0$ and initial guess $u^{0}_0 \in \VV_0$.
\State Set $N=n=0$, and choose $\Delta t \in \G(\CC)$, cf.~\eqref{eq:set}.
\While {true}
\Repeat 
\State Do one iterative step~\eqref{eq:iterationW} in $\VV_N$ to obtain $u_N^{n+1}$ from $u_N^n$.
\State Update $n \leftarrow n+1$.
\Until{$\NN{\R^{\Delta t}_N(u_N^n)}_{\Delta t} \leq \alpha\Phi_N^n(\Delta t)$}
\State Set $u_N^{n^\star}:=u_N^n$.
\State \multiline{Enrich the Galerkin space $\VV_N$ appropriately based on the local error indicators in order to obtain $\VV_{N+1}$.}
\State Define $u_{N+1}^0:=u_N^{n}$ by canonical embedding $\VV_N \hookrightarrow \VV_{N+1}$. 
\State Update $N \leftarrow N+1$ and $n \leftarrow 0$.
\EndWhile
\State \textsc{Return} the sequence $\{\u{}=u_N^{n^\star}\}_{N}$.
\end{algorithmic}
\end{algorithm}

\subsection{Convergence analysis}

Before we will state and prove a convergence result for our adaptive iterative linearised Galerkin algorithm, we first present an auxiliary result. 

\begin{lemma} \label{lem:aux1}
Given the assumptions of Proposition~\ref{pr:stab}. Let $\{\u{}\}_N \subset \VV$ be the sequence generated by Algorithm~\ref{alg:two} and assume that $\{\E(\u{})\}_N$ is bounded from below. Then, we have that
\begin{align} \label{eq:energydiff}
\left|\E(\u{})-\E(\up{})\right| \to 0,
\end{align}
and
\begin{align} \label{eq:disresconv}
\norm{\E'(\u{})}_{\VV_N'}=\NN{\R_N^{\Delta t}(\u{})}_{\Delta t} \to 0,
\end{align}
as $N\to\infty$.
\end{lemma}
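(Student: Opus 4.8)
The plan is to exploit the monotone decay of the energy throughout the entire execution of Algorithm~\ref{alg:two}, together with the stopping criterion that terminates the inner loop.

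First I would show that the whole sequence of energies produced by the algorithm is monotone decreasing. On each fixed Galerkin space $\VV_N$, Proposition~\ref{pr:stab} gives $\E(u_N^n)\ge\E(u_N^{n+1})$ for all $n$, so in particular $\E(u_N^0)\ge\E(\u{})$. The decisive observation is that the refinement step does not increase the energy: since $u_{N+1}^0:=\u{}$ is set through the canonical embedding $\VV_N\hookrightarrow\VV_{N+1}$, one has $\E(u_{N+1}^0)=\E(\u{})$. Concatenating these two facts yields $\E(\u{})\ge\E(\up{})$ for every $N$, i.e.\ $\{\E(\u{})\}_N$ is monotone decreasing. Combined with the assumed lower bound, this sequence converges in $\mathbb{R}$, whence its consecutive differences vanish; by monotonicity $|\E(\u{})-\E(\up{})|=\E(\u{})-\E(\up{})\to0$, which is~\eqref{eq:energydiff}.

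For~\eqref{eq:disresconv} I would first record the identity $\norm{\E'(\u{})}_{\VV_N'}=\NN{\R_N^{\Delta t}(\u{})}_{\Delta t}$: it follows from the defining relation~\eqref{eq:RN} and the fact that $\B_{\Delta t}$ is the inner product inducing $\NN{\cdot}_{\Delta t}$, so that the Riesz isometry carries the dual norm of $\E'(\u{})$ over to the norm of its representative $\R_N^{\Delta t}(\u{})$. Next I would invoke the stopping criterion of Algorithm~\ref{alg:two} at the terminal index $n=n^\star(N)$, namely
\[
\NN{\R_N^{\Delta t}(\u{})}_{\Delta t}\le\alpha\,\Phi_N^{n^\star}(\Delta t)=\alpha\bigl(\E(u_N^0)-\E(\u{})\bigr),
\]
where the equality uses the definition~\eqref{eq:GFIincrement} of $\Phi_N^n$. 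Since the initial guess on $\VV_N$ is $u_N^0=\um$ (the canonical embedding of the final iterate from $\VV_{N-1}$), the right-hand side equals $\alpha\bigl(\E(\um)-\E(\u{})\bigr)$, which tends to $0$ as $N\to\infty$ by the first part applied at index $N-1$. Hence $\NN{\R_N^{\Delta t}(\u{})}_{\Delta t}\to0$, proving~\eqref{eq:disresconv}.

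The argument is largely bookkeeping, and the one step that genuinely needs care is the continuity of the energy across refinements, i.e.\ recognising that $\E(u_{N+1}^0)=\E(\u{})$ because of the canonical embedding. This is exactly what glues the per-space energy decays into a single globally monotone sequence and, simultaneously, turns $\Phi_N^{n^\star}(\Delta t)$ into a bona fide consecutive difference of that convergent sequence. For completeness I would also note that finiteness of $n^\star(N)$---so that $\u{}$ is well-defined for each $N$---is guaranteed by the discussion in component~(ii) above, which shows that the stopping criterion is met after finitely many iterations whenever $\Phi_N^1(\Delta t)>0$ (and that $u_N^1=u_N^0$ already solves~\eqref{eq:weakW} otherwise).
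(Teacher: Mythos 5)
Your argument is correct and follows essentially the same route as the paper: global monotone decay of $\{\E(\u{})\}_N$ plus the assumed lower bound gives convergence and hence \eqref{eq:energydiff}, and the stopping criterion together with $u_N^0=\um$ bounds $\alpha^{-1}\NN{\R_N^{\Delta t}(\u{})}_{\Delta t}$ by the consecutive energy difference, which tends to zero. Your explicit justification that the canonical embedding preserves the energy across refinements (so the per-space decays concatenate into one monotone sequence) is a detail the paper leaves implicit, but it is the same proof.
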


\begin{proof}
From Proposition~\ref{pr:stab} recall that $\{\E(\u{})\}_N$ is a decreasing sequence in $\mathbb{R}$, which, in addition, is assumed to be bounded from below. Hence, it converges, and~\eqref{eq:energydiff} follows. Furthermore, by modus operandi of Algorithm~\ref{alg:two}, we have that
\begin{align*}
\alpha^{-1}\NN{\R_{N}^{\Delta t}(\u{})}_{\Delta t} 
\le\Phi_N^{n^\star(N)}(\Delta t)
=\E(u^0_N)-\E(\u{})
=\E(\um)-\E(\u{}) \to 0,
\end{align*}
as $N\to\infty$, where we have used~\eqref{eq:GFIincrement} and~\eqref{eq:energydiff}.
\end{proof}

We prove the ensuing convergence result. Here, as already elaborated earlier, we remark that the weak coercivity of $\E$, cf.~\eqref{eq:wc}, and the energy decay established in Proposition~\ref{pr:stab} imply the boundedness of the sequence $\{\u{}\}_N$ in $\VV$.

\begin{theorem}[Convergence of the adaptive ILG procedure]\label{thm:ILG}
Given the assumptions of Proposition~\ref{pr:res}. If the sequence of Galerkin spaces $\{\VV_N\}_N$ satisfies the property~\eqref{eq:disq} and the sequence of final approximations $\{\u{}\}_{N}$ generated by Algorithm~\ref{alg:two} is bounded in $\VV$, then $\{\u{}\}_{N}$ has a subsequence that converges weakly in $\VV$ and strongly in $\L^2(\Omega)$ to a solution of the weak formulation~\eqref{eq:poissonweak}. Moreover, if $f$ is continuous on $\overline{\Omega} \times \mathbb{R}$, then it even holds strong convergence in $\VV$ (for a further subsequence). 
\end{theorem}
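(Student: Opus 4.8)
The plan is to reduce the statement to the compactness machinery already used for Theorems~\ref{thm:alternative} and~\ref{thm:convW}; the only genuinely new ingredient is to promote the convergence of the \emph{discrete} residual from Lemma~\ref{lem:aux1} to convergence of the \emph{full} residual, namely $\norm{\E'(\u{})}_{\VV'}\to0$ as $N\to\infty$. Once this is in hand, weak $\VV$-compactness, strong $\L^2$-compactness, and (for the last claim) the mountain-pass compactness Lemma~\ref{lem:mp} finish the argument essentially verbatim.

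To upgrade the residual I would first estimate $\norm{\E'(\u{})}_{\VV_{N+1}'}$. Since $\u{}=u_{N+1}^0$ serves as the initial guess on the enriched space $\VV_{N+1}$, the representation~\eqref{eq:R} gives $\norm{\E'(u_{N+1}^0)}_{\VV_{N+1}'}=\tfrac{1}{\Delta t}\NN{u_{N+1}^0-u_{N+1}^1}_{\Delta t}$, and the energy decay~\eqref{eq:stab} combined with the monotonicity $\E(u_{N+1}^1)\ge\E(\up{})$ bounds the right-hand side by $\tfrac{1}{\Delta t}\bigl((\E(\u{})-\E(\up{}))/\gamma_f(\Delta t)\bigr)^{\nicefrac12}$, which vanishes as $N\to\infty$ by~\eqref{eq:energydiff}. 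Next I would invoke the contraction hypothesis~\eqref{eq:disq}: writing the discretisation part as $\mathcal{E}^{\mathrm{dis}}_{N+1}(\u{})=\norm{\E'(\u{})}_{\VV'}-\norm{\E'(\u{})}_{\VV_{N+1}'}$ and using $\mathcal{E}^{\mathrm{dis}}_{N}(\u{})\le\norm{\E'(\u{})}_{\VV'}$, the bound~\eqref{eq:disq} rearranges to
\begin{equation}\label{eq:replace}
(1-q)\,\norm{\E'(\u{})}_{\VV'}\le\norm{\E'(\u{})}_{\VV_{N+1}'}.
\end{equation}
Since the right-hand side tends to zero and $0<q<1$, it follows that $\norm{\E'(\u{})}_{\VV'}\to0$.

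With the full residual under control, the remaining steps mirror the earlier proofs. By the assumed $\VV$-boundedness and reflexivity I would extract a subsequence $\bm{\mathfrak{u}}_{N'}\rightharpoonup u^\star$ weakly in $\VV$, and by the compact embedding $\VV\hookrightarrow\L^2(\Omega)$ (valid for $d\in\{1,2,3\}$) a further subsequence with $\bm{\mathfrak{u}}_{N'}\to u^\star$ strongly in $\L^2(\Omega)$. For fixed $v\in\VV$, the gradient term of $\dprod{\E'(\bm{\mathfrak{u}}_{N'})}{v}$ converges by weak $\VV$-convergence, while the reaction term converges by the Lipschitz bound~\eqref{eq:flipschitz} together with strong $\L^2$-convergence; hence $\dprod{\E'(\bm{\mathfrak{u}}_{N'})}{v}\to\dprod{\E'(u^\star)}{v}$. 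Since $\norm{\E'(\bm{\mathfrak{u}}_{N'})}_{\VV'}\to0$ forces $\dprod{\E'(\bm{\mathfrak{u}}_{N'})}{v}\to0$, uniqueness of limits gives $\dprod{\E'(u^\star)}{v}=0$ for all $v\in\VV$, i.e.\ $u^\star$ solves~\eqref{eq:poissonweak}. For the final assertion, when $f$ is continuous I would apply Lemma~\ref{lem:mp} to the bounded sequence $\{\bm{\mathfrak{u}}_{N'}\}$ with $\E'(\bm{\mathfrak{u}}_{N'})\to0$ in $\VV'$, obtaining a strongly $\VV$-convergent subsubsequence whose limit necessarily coincides with the weak limit $u^\star$.

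The main obstacle is the residual upgrade~\eqref{eq:replace}: one must carefully track the splitting of $\norm{\E'(\u{})}_{\VV'}$ into its iteration and discretisation parts across the refinement step $N\to N+1$, crucially exploiting that $\u{}$ acts simultaneously as the final approximation on $\VV_N$ and as the initial guess on $\VV_{N+1}$, so that its discrete residual on $\VV_{N+1}$ is already tied to the (vanishing) energy increment~\eqref{eq:energydiff}. Everything downstream of this inequality is a routine repetition of the compactness arguments already established for the full-space and closed-subspace cases.
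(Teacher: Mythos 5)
Your argument is correct, and it differs from the paper's proof in the one step that actually matters. The paper argues by contradiction: assuming $\norm{\E'(u^\star)}_{\VV'}>\varepsilon$ at the weak limit, it combines the rearranged contraction property (its inequality \eqref{eq:replace}) with the inclusion $\VV_{N_j+1}\subseteq\VV_{N_{j+1}}$, the triangle inequality, the Lipschitz bound \eqref{eq:lipschitz}, and the energy expansion of Lemma~\ref{lem:EE} (to force $\NN{\u{j}-\u{j+1}}_{\Delta t}\to0$ along the extracted subsequence, which in turn needs the strong $\L^2$ convergence to absorb the remainder $\Psi_{\Delta t}$). You instead prove directly that $\norm{\E'(\u{})}_{\VV_{N+1}'}\to0$ for the \emph{whole} sequence, by observing that $\u{}=u_{N+1}^0$ is the starting iterate on the enriched space, so that by \eqref{eq:R} its residual there equals the first iteration increment $\tfrac{1}{\Delta t}\NN{u_{N+1}^0-u_{N+1}^1}_{\Delta t}$, which is controlled via \eqref{eq:stab}, the monotonicity $\E(u_{N+1}^1)\ge\E(\up{})$, and \eqref{eq:energydiff}; the same algebraic rearrangement of \eqref{eq:disq} then gives $(1-q)\norm{\E'(\u{})}_{\VV'}\le\norm{\E'(\u{})}_{\VV_{N+1}'}\to0$. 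This is a cleaner route: it avoids the contradiction, dispenses with Lemma~\ref{lem:EE} and the Lipschitz continuity of $\E'$ at this stage, and yields decay of the full residual along the entire sequence rather than only the vanishing of $\E'$ at the weak limit — which also makes the closing application of Lemma~\ref{lem:mp} (requiring $\E'(\u{})\to0$ in $\VV'$) immediate, a point the paper's contradiction argument leaves somewhat implicit. The downstream compactness steps coincide with the paper's and with Theorem~\ref{thm:convW}. The only small caveat to record is that your estimate uses $n^\star(N+1)\ge 1$ so that $\E(u_{N+1}^1)\ge\E(\up{})$; this is guaranteed by the repeat--until structure of Algorithm~\ref{alg:two}.
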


\begin{proof}
By the very same arguments as in the proof of Theorem~\ref{thm:convW}, there exists a subsequence $\{\u{j}\}_j \subseteq \{\u{}\}_N$ and an element ${u^\star} \in \VV$ such that $\{\u{j}\}_j$ converges to ${u^\star}$ weakly in $\VV$ and strongly in $\L^2(\Omega)$. Furthermore, we have that
\begin{align} \label{eq:weakEconv}
\lim_{j \to \infty} \dprod{\E'(\u{j})}{v} =\dprod{\E'({u^\star})}{v} \qquad \forall v \in \VV.
\end{align}
We aim to show that $\E'({u^\star})=0$ in $\VV'$, i.e.~${u^\star}$ is a solution of the weak formulation~\eqref{eq:poissonweak}. For that purpose, by contradiction, we assume that there exists $\varepsilon>0$ such that 
$
\norm{\E'({u^\star})}_{\VV'}> \varepsilon;
$
equivalently, there is ${v^\star} \in \VV$ with $\NN{{v^\star}}_{\Delta t}=1$ such that
$
\dprod{\E'({u^\star})}{{v^\star}}> \varepsilon.
$
Hence, for all $j$ large enough, together with~\eqref{eq:weakEconv}, we have that
$
\norm{\E'(\u{j})}_{\VV'}
\ge \dprod{\E'(\u{j})}{{v^\star}}>\varepsilon
$
. Recalling the contraction property~\eqref{eq:disq}, i.e.
\[
\norm{\E'(\u{j})}_{\VV'}-\norm{\E'(\u{j})}_{\VV_{N_j+1}'}
\le q \Big(\norm{\E'(\u{j})}_{\VV'}-\norm{\E'(\u{j})}_{\VV_{N_j}'}\Big),
\]
this further yields
\begin{align} \label{eq:replace}
(1-q)\epsilon
< \norm{\E'(\u{j})}_{\VV_{N_j+1}'}-q\norm{\E'(\u{j})}_{\VV_{N_j}'}
\le\norm{\E'(\u{j})}_{\VV_{N_j+1}'}\le \norm{\E'(\u{j})}_{\VV_{N_{j+1}}'},
\end{align}
for all $j$ large enough. 
Then, 
using the triangle inequality and the Lipschitz continuity bound~\eqref{eq:lipschitz}, we obtain
\begin{align*}
(1-q)\epsilon
&<
\norm{\E'(\u{j})-\E'(\u{j+1})}_{\VV_{N_{j+1}}'}
+\norm{\E'(\u{j+1})}_{\VV'_{N_{j+1}}}\\
&\le L_{\E'}(\Delta t)\NN{\u{j}-\u{j+1}}_{\Delta t}
+\norm{\E'(\u{j+1})}_{\VV_{N_{j+1}}'}.
\end{align*}
In light of \eqref{eq:disresconv}, we infer that
\[
\lim_{j\to\infty}\norm{\E'(\u{j+1})}_{\VV'_{N_{j+1}}}=0.
\]
Hence, in order to derive a contradiction, it is sufficient to show that
\begin{align}\label{eq:suff}
\lim_{j\to\infty}\NN{\delta_j}_{\Delta t}=0,
\end{align}
where we let 
$
\delta_j:=\u{j}-\u{j+1}
$
(which, by assumption, is uniformly bounded for all $j$).
For this purpose, we recall Lemma~\ref{lem:EE} to note the identity
\begin{align}\label{eq:contr}
\frac{1}{2 \Delta t} \NN{\delta_j}_{\Delta t}^2
=
\E(\u{j})-\E(\u{j+1})
-\dprod{\E'(\u{j+1})}{\delta_j}
-\Psi_{\Delta t}(\u{j+1},\u{j}),
\end{align}
with
\begin{align*}
\left|\Psi_{\Delta t}(\u{j+1},\u{j})\right| \leq \frac{1}{2} \sigma_f(\Delta t) \norm{\delta_j}_{\L^2(\Omega)}^2.
\end{align*}
Due to the strong convergence in $\L^2(\Omega)$, we infer that 
$
\lim_{j\to\infty}\left|\Psi_{\Delta t}(\u{j+1},\u{j})\right|=0.
$
Furthermore, by proceeding along the lines of the proof of Lemma~\ref{lem:aux1} we obtain that
\begin{align*}
\lim_{j\to\infty}\left(\E(\u{j+1})-\E(\u{j})\right)=0\qquad\text{and}\qquad
\lim_{j\to\infty}\dprod{\E'(\u{j+1})}{\delta_j}=0.
\end{align*}
Hence, from~\eqref{eq:contr} we deduce~\eqref{eq:suff} and, thereby, the required contradiction. Finally, if $f$ is continuous on $\overline{\Omega} \times \mathbb{R}$, then we apply Lemma~\ref{lem:mp}, which provides a strongly convergent subsequence of~$\{\u{j}\}_{j}$. 
\end{proof}

\subsection{Energy contraction} \label{sec:energycontraction}

We present an alternative condition to the space enrichment assumption~\eqref{eq:disq} that is solely based on an energy reduction property, however, requires some stronger assumptions for the convergence analysis. To this end, we note first that if $\mathcal{U} \subset \VV$ is a non-empty, bounded, closed, convex subset such that $\E'|_{\mathcal{U}}$ is strongly monotone, then, for any closed subspace $\WW \subset \VV$, the energy $\E$ has a unique local minimiser $u_{\WW}^\star\in\WW \cap \mathcal{U}$; see, e.g., to~\cite[Ch.~25.5]{Zeidler:90}.

\begin{theorem} \label{thm:ILG2}
Given the assumptions of Proposition~\ref{pr:res}. Suppose that the sequence $\{\u{}\}_N$ generated by Algorithm~\ref{alg:two} is contained in a non-empty, bounded, closed, and convex subset $\mathcal{U} \subset \VV$  such that $\E'|_{\mathcal{U}}$ is strongly monotone, i.e. there exists a constant $\nu(\Delta t)>0$ with
\begin{align} \label{eq:strongmonotonicity}
\dprod{\E'(u)-\E'(v)}{u-v} \geq \nu(\Delta t) \NN{u-v}_{\Delta t}^2 \qquad \forall u,v \in \mathcal{U}.
\end{align}
If, moreover, the hierarchically refined Galerkin spaces $\{\VV_N\}_N$ resulting from Algorithm~1 satisfy the energy contraction property
\begin{align} \label{eq:spaceenergycontraction}
\E(u^\star_{\VV_{N+1}})-\E(u^\star_{\VV}) \leq q \left(\E(u^\star_{\VV_{N} })-\E(u^\star_{\VV }) \right) \qquad \forall N \geq 0,
\end{align}
where $q \in (0,1)$, and $u_{\VV}^\star, u_{\VV_N}^\star \in \mathrm{int}(\mathcal{U})$ for all $N$ large enough, then $\{\u{}\}_N$ converges strongly to $u^\star_{\VV}$, which is a weak solution of~\eqref{eq:poissonweak}.
\end{theorem}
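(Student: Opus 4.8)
The plan is to estimate the total error $\NN{\u{}-u^\star_\VV}_{\Delta t}$ by splitting it through the triangle inequality into a \emph{discretisation part} $\NN{u^\star_{\VV_N}-u^\star_\VV}_{\Delta t}$, controlled by the energy contraction~\eqref{eq:spaceenergycontraction}, and an \emph{iteration part} $\NN{\u{}-u^\star_{\VV_N}}_{\Delta t}$, controlled by the residual decay of Lemma~\ref{lem:aux1}. The device that converts both the energy gap and the residual into norm estimates is a strong-convexity-type bound deduced from the strong monotonicity~\eqref{eq:strongmonotonicity}: for any $u,v\in\mathcal{U}$,
\[
\E(v)-\E(u)\ge\dprod{\E'(u)}{v-u}+\frac{\nu(\Delta t)}{2}\NN{v-u}_{\Delta t}^2.
\]
I would obtain this exactly as in Lemma~\ref{lem:EE}, writing $\E(v)-\E(u)=\int_0^1\dprod{\E'(u+s(v-u))}{v-u}\ds$, subtracting $\dprod{\E'(u)}{v-u}$, and applying~\eqref{eq:strongmonotonicity} to the pair $(u+s(v-u),u)\in\mathcal{U}$ (convexity of $\mathcal{U}$ keeps the segment in $\mathcal{U}$), which gives $\dprod{\E'(u+s(v-u))-\E'(u)}{v-u}\ge\nu(\Delta t)\,s\,\NN{v-u}_{\Delta t}^2$ and integrates to the claim.

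For the discretisation part I would apply this bound with $u=u^\star_\VV$ and $v=u^\star_{\VV_N}$. Since $u^\star_\VV\in\mathrm{int}(\mathcal{U})$ minimises $\E$ over $\VV\cap\mathcal{U}$, it is an \emph{interior} critical point, so $\dprod{\E'(u^\star_\VV)}{w}=0$ for all $w\in\VV$, in particular for $w=u^\star_{\VV_N}-u^\star_\VV$. Combining this with the energy contraction~\eqref{eq:spaceenergycontraction} yields
\[
\frac{\nu(\Delta t)}{2}\NN{u^\star_{\VV_N}-u^\star_\VV}_{\Delta t}^2\le\E(u^\star_{\VV_N})-\E(u^\star_\VV)\le q^N\bigl(\E(u^\star_{\VV_0})-\E(u^\star_\VV)\bigr)\to0,
\]
whence $u^\star_{\VV_N}\to u^\star_\VV$ strongly in $\VV$.

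For the iteration part, I first note that $\{\u{}\}_N\subset\mathcal{U}$ is bounded and that, via the energy representation of Lemma~\ref{lem:Ebound}, $\E$ is bounded below on the bounded set $\mathcal{U}$; hence $\{\E(\u{})\}_N$ is bounded below and Lemma~\ref{lem:aux1} applies, giving $\norm{\E'(\u{})}_{\VV_N'}\to0$. Both $\u{}$ and $u^\star_{\VV_N}$ lie in $\VV_N\cap\mathcal{U}$, and their difference lies in $\VV_N$; using $\dprod{\E'(u^\star_{\VV_N})}{w}=0$ for all $w\in\VV_N$ (again the interior criticality, valid for $N$ large) together with strong monotonicity, I would estimate
\[
\nu(\Delta t)\NN{\u{}-u^\star_{\VV_N}}_{\Delta t}^2\le\dprod{\E'(\u{})}{\u{}-u^\star_{\VV_N}}\le\norm{\E'(\u{})}_{\VV_N'}\NN{\u{}-u^\star_{\VV_N}}_{\Delta t},
\]
so that $\NN{\u{}-u^\star_{\VV_N}}_{\Delta t}\le\nu(\Delta t)^{-1}\norm{\E'(\u{})}_{\VV_N'}\to0$. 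The triangle inequality then delivers $\u{}\to u^\star_\VV$ strongly in $\VV$, and since $u^\star_\VV$ is an interior critical point it satisfies $\dprod{\E'(u^\star_\VV)}{v}=0$ for all $v\in\VV$, i.e.~it solves~\eqref{eq:poissonweak}.

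The main obstacle I anticipate is bookkeeping rather than analysis: the energy contraction hypothesis~\eqref{eq:spaceenergycontraction} is phrased in terms of the \emph{exact} Galerkin minimisers $u^\star_{\VV_N}$, whereas Algorithm~\ref{alg:two} only produces the \emph{approximate} iterates $\u{}$, so the two families must be reconciled, which is precisely the function of the residual bound in Lemma~\ref{lem:aux1}. In carrying this out one must track carefully which Galerkin orthogonality relation is available for each inner product---the one on $\VV$ for the discretisation part and the one on $\VV_N$ for the iteration part---and it is exactly the interior hypotheses $u^\star_\VV,u^\star_{\VV_N}\in\mathrm{int}(\mathcal{U})$ that license these relations by ruling out constrained critical points on $\partial\mathcal{U}$.
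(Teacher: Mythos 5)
Your argument is correct and follows essentially the same route as the paper: the error is split by the triangle inequality into the distance from $\u{}$ to the exact Galerkin minimiser $u^\star_{\VV_N}$ (controlled by the residual decay of Lemma~\ref{lem:aux1} together with strong monotonicity and the interior criticality of $u^\star_{\VV_N}$) and the distance from $u^\star_{\VV_N}$ to $u^\star_{\VV}$ (controlled by the energy contraction~\eqref{eq:spaceenergycontraction} via a strong-convexity lower bound). The only cosmetic differences are that the paper cites the two-sided energy--norm equivalence from an external lemma rather than rederiving the lower bound from~\eqref{eq:strongmonotonicity}, and deduces $\E(u^\star_{\VV_N})\to\E(u^\star_{\VV})$ by a contradiction argument rather than by iterating the contraction to obtain a $q^N$ factor.
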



\begin{proof}
For any closed subspace $\WW\subset\VV$, due to the strong monotonicity~\eqref{eq:strongmonotonicity} and the Lipschitz continuity~\eqref{eq:lipschitz}, we notice the equivalence 
\begin{align} \label{eq:normequivalence}
\frac{1}{2} \nu(\Delta t)\NN{u-u_{\WW}^\star}_{\Delta t}^2 \leq \E(u)-\E(u^\star_{\WW}) \leq \frac{1}{2}L_{\E'}(\Delta t) \NN{u-u_{\WW}^\star}_{\Delta t}^2 \qquad \forall u \in \WW \cap \mathcal{U},
\end{align} 
cf.~\cite[Lem.~2]{HeidWihler2:19v1}.
Since the sequence of Galerkin spaces is hierarchical, this immediately implies that $\{\E(u^\star_{\VV_{N}})\}_N$ is a monotone decreasing sequence, which is bounded from below by $\E(u^\star_{\VV})$. Therefore, 
\[
\E^\star:=\lim_{N\to\infty}\E(u^\star_{\VV_N})\geq \E(u^\star_{\VV})
\]
exists. We claim that $\E^\star=\E(u^\star_{\VV})$; if not, then there exists $\epsilon>0$ such that 
\begin{align*}
\E(u^\star_{\VV_{N}}) \geq \E^\star \ge \E(u^\star_{\VV})+\epsilon,  \qquad N \geq 0,
\end{align*}
which, in view of~\eqref{eq:spaceenergycontraction}, leads to
\begin{align*}
0<\epsilon (1-q)\leq
\E(u^\star_{\VV_{N}})-\E(u^\star_{\VV_{N+1}})\xrightarrow{N\to\infty} 0,
\end{align*}
a contradiction. Recalling~\eqref{eq:normequivalence}, it follows that
\begin{align*}
0 \leq \NN{u_{\VV_N}^\star-u_{\VV}^\star}^2_{\Delta t} \leq \frac{2}{\nu(\Delta t)} \left(\E(u_{\VV_N}^\star)-\E(u^\star_{\VV})\right) \xrightarrow{N\to\infty} 0,
\end{align*}
i.e., $\{u_{\VV_N}^\star\}_N$ converges strongly to $u_{\VV}^\star$. Hence, by virtue of~\eqref{eq:strongmonotonicity}, we have
\begin{align*}
\nu(\Delta t) \NN{\u{}-u_{\VV_N}^\star}_{\Delta t}^2 
&\leq \dprod{\E'(\u{})-\E'(u^\star_{\VV_N})}{\u{}-u_{\VV_N}^\star}\\
&\leq \left(\norm{\E'(\u{})}_{\VV_N'}+\norm{\E'(u^\star_{\VV_N})}_{\VV_N'}\right) \NN{\u{}-u_{\VV_N}^\star}_{\Delta t},
\end{align*}
i.e.
\[
\NN{\u{}-u_{\VV_N}^\star}_{\Delta t} \leq \frac{1}{\nu(\Delta t) }\ \left(\norm{\E'(\u{})}_{\VV_N'}+\norm{\E'(u^\star_{\VV_N})}_{\VV_N'}\right).
\]
Upon exploiting~\eqref{eq:disresconv}, we notice that $\norm{\E'(\u{})}_{\VV_N'}\to0$ as $N\to\infty$. Moreover, for $N$ sufficiently large, because $u^\star_{\VV_N}$ is a local minimiser of $\E$ on $\VV_N \cap\mathrm{int}(\mathcal{U})$, we have $\E'(u^\star_{\VV_N})=0$ in $\VV_N'$. Hence, we conclude that
\begin{align*} 
\NN{\u{}-u^\star_{\VV}}_{\Delta t}
\le
\NN{\u{}-u_{\VV_N}^\star}_{\Delta t}+\NN{u_{\VV_N}^\star-u^\star_{\VV}}_{\Delta t}
\xrightarrow{N\to\infty} 0,
\end{align*}
for $N\to\infty$, which completes the proof.
\end{proof} 

\begin{remark} \label{rem:spaceconctraction}
Theorem~\ref{thm:ILG2} remains valid, if we replace the assumption~\eqref{eq:spaceenergycontraction} by 
\begin{align*} 
\E(\up{})-\E(u^\star_{\VV}) \leq q \left(\E(\u{})-\E(u^\star_{\VV }) \right) \qquad \forall N \geq 0,
\end{align*}
where $q \in (0,1)$ is fixed.
\end{remark}

\section{Energy-based adaptive finite element method}\label{sec:Energy-FEM}

In this section, given the variational setting and the analysis derived in~\S\ref{sec:energy_minimisation} (see, in particular, Proposition~\ref{pr:stab}), we utilise an adaptive mesh refinement strategy that is solely based on local energy reductions. To this end, we follow the recent approach presented in~\cite[\S 3]{HeidStammWihler:19}, which does not involve any a posteriori error indicators. 

\subsection{Finite element meshes and spaces}
   
For $N\ge0$, let $ \mathcal{T}_N=\{\kappa\}_{\kappa\in\mathcal{T}_N}$ be a regular and shape-regular mesh partition of $\Omega$ into disjoint open simplices. We consider the conforming finite element space
\begin{align} \label{eq:femspace}
\VV_{N}:=\{\varphi\in \H^1_0(\Omega):\,\varphi|_{\kappa} \in \mathbb{P}_{p}(\kappa) \ \forall \kappa \in \mathcal{T}_{N}\},
\end{align}
where $\mathbb{P}_{p}(\kappa)$ signifies the set of all polynomials of degree at most $p\in\mathbb{N}$ on $\kappa$. 

For the purpose of local refinements, for any element~$\kappa\in \mathcal{T}_N$, we consider the open patch~$\P_\kappa$ comprising of $\kappa$ and its immediate face-wise neighbours in the mesh~$\mathcal{T}_N$. Moreover, we define the modified patch~$\Pref_\kappa$ by uniformly (red) refining the element $\kappa$ into a (fixed) number of subelements; here, we assume that the introduction of any hanging nodes in $\P_\kappa$ is removed by introducing suitable (e.g.~green) refinements, see Figure~\ref{fig:ref}. 
%
We also introduce the associated (low-dimensional) space
\begin{equation}\label{eq:Vloc}
\VV(\Pref_\kappa):=\{v \in \VV: v|_\tau \in \mathbb{P}_p(\tau) \ \forall \tau \in \Pref_\kappa \ \text{and} \ v|_{\Omega \setminus \Pref_\kappa}=0\},
\end{equation}
which consists of all finite element functions that are locally supported on the patch~$\Pref_\kappa$.

\begin{figure}
\begin{center}
\begin{tabular}{cc}
\includegraphics[scale=0.2]{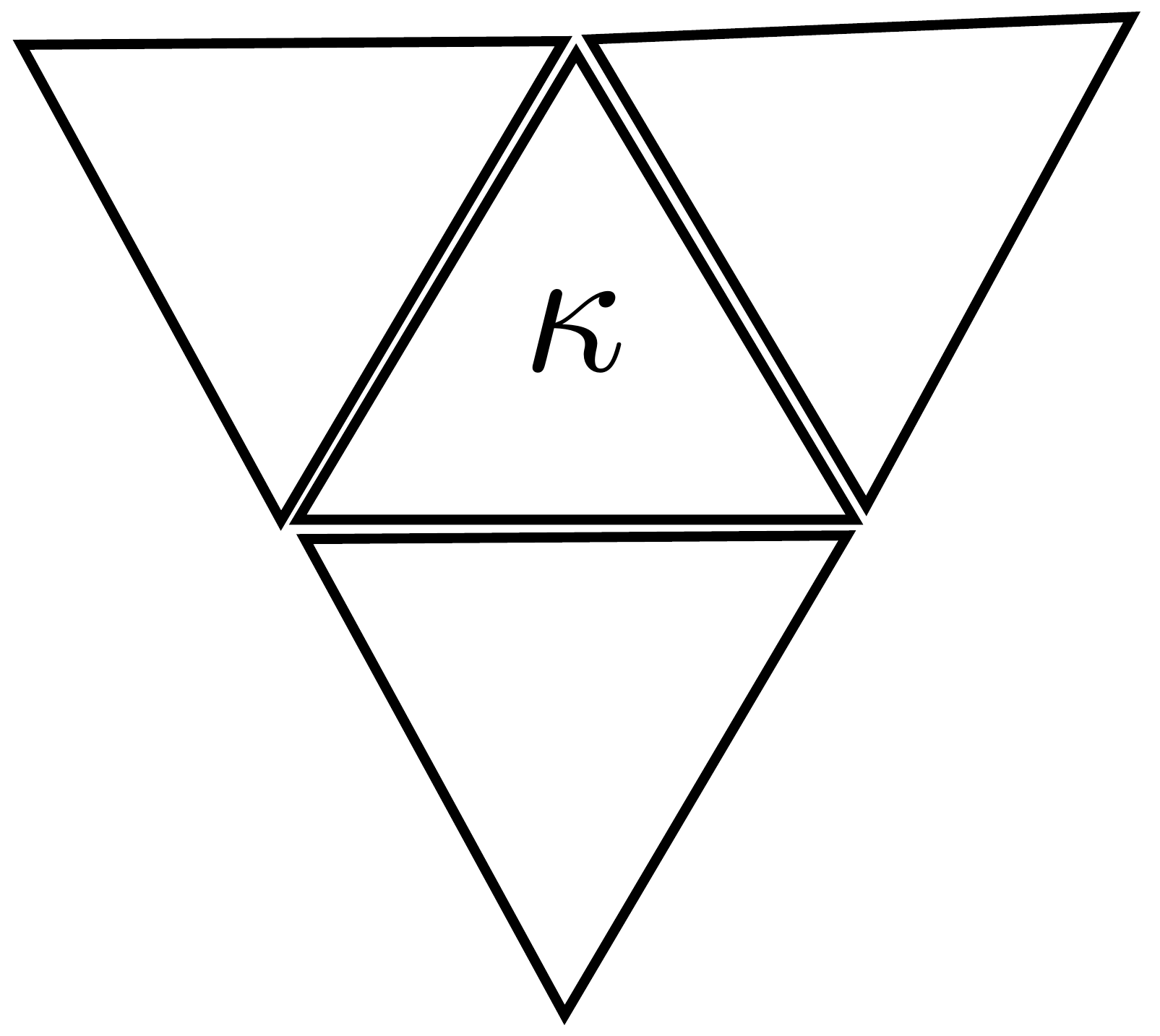} &
\includegraphics[scale=0.2]{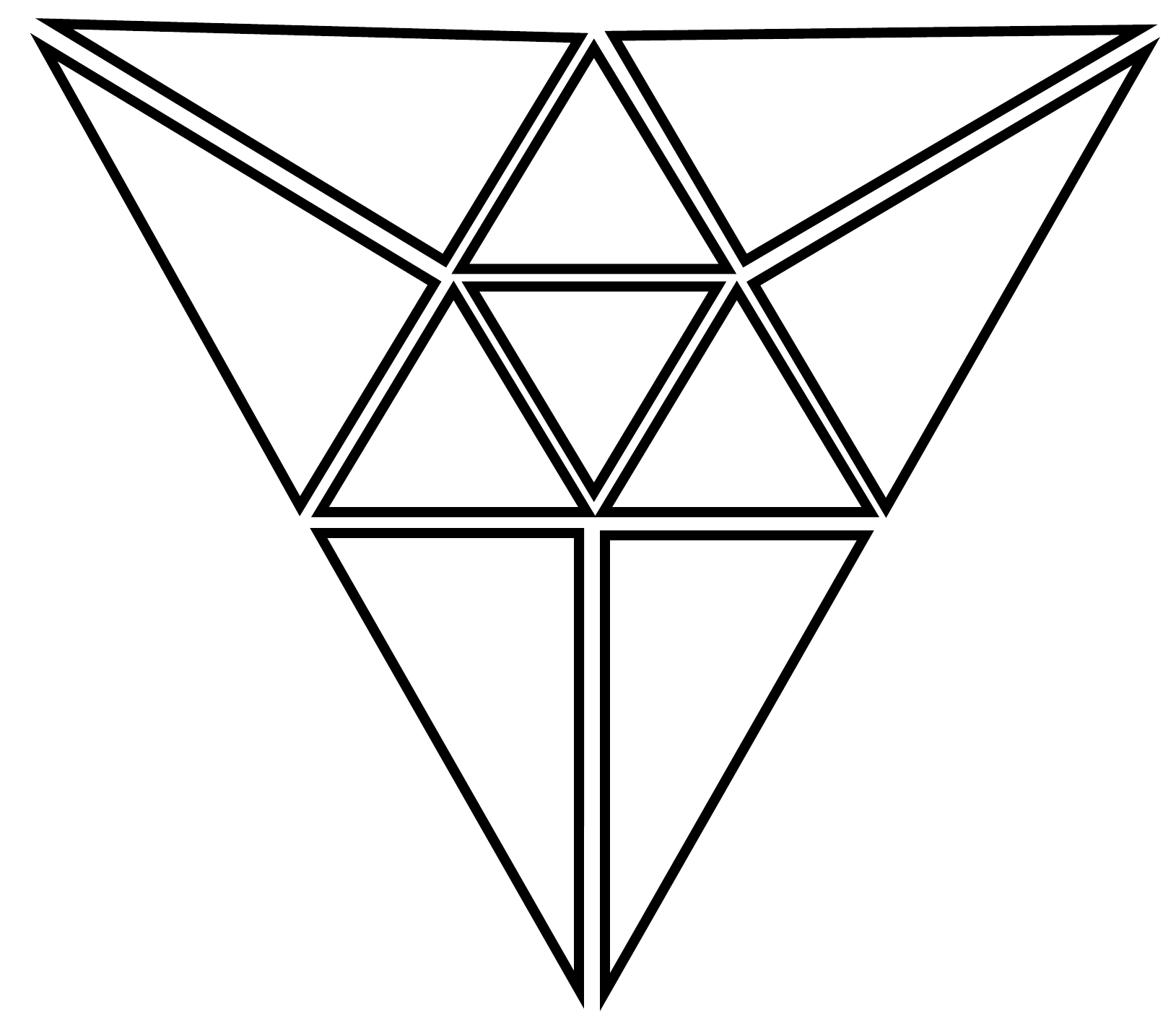}
\end{tabular}
\caption{Local element patches associated to a triangular element~$\kappa$. Left: Mesh patch~$\P_\kappa$ consisting of the element $\kappa$ and its face-neighbours. Right: Modified patch $\Pref_\kappa$ constructed based on red-refining $\kappa$ and on green-refining its neighbours.}
\label{fig:ref}
\end{center}
\end{figure}

\subsection{Energy-driven adaptive mesh refinement strategy}

Suppose that we have found a sufficiently accurate approximation $u_N^n \in \VV_N$ of the discrete problem~\eqref{eq:weakW}, with $\WW=\VV_N$, for some large enough~$n\ge 0$. Then, for each element $\kappa\in\mathcal{T}_N$, given a basis~$\{\xi^1_\kappa,\ldots,\xi^{m_\kappa}_\kappa\}$ of the local space $\VV(\Pref_\kappa)$ from~\eqref{eq:Vloc}, we introduce the extended space
\[
\Vh(\Pref_\kappa;u_N^n):=\Span\{\xi^1_\kappa,\ldots,\xi^{m_{\kappa}}_\kappa,u_N^n\}.
\] 
Now, upon performing one \emph{local} discrete iteration step~\eqref{eq:iterationW} on $\WW:=\Vh(\Pref_\kappa;u_N^n) \subset \VV$ we obtain a potentially improved approximation $\widetilde u_{N,\kappa}^n \in \Vh(\Pref_\kappa;u_N^n)$ through
\begin{equation}\label{eq:locweak}
\B_{\Delta t}(\widetilde u_{N,\kappa}^n,v)=\ell_{\Delta t}(u_N^n;v)
\qquad\forall v\in\Vh(\Pref_\kappa;u_N^n).
\end{equation}
Here, we emphasise that the discrete iteration step~\eqref{eq:iterationW} on the low-dimensional space $\WW=\Vh(\Pref_\kappa;u_N^n)$ entails hardly any computational cost; for instance, for dimension $d=2$ and polynomial degree~$p=1$, the dimension of the locally refined space $\Vh(\Pref_\kappa;u_N^n)$ is typically 3 or 4. Moreover, the local iteration steps can be performed individually, and thus in parallel, for each element $\kappa\in\mathcal{T}_N$.

\begin{lemma}
Suppose that the assumptions of Proposition~\ref{pr:stab} are satisfied, and let $\kappa \in \mathcal{T}_N$. Then, for the local weak formulation~\eqref{eq:locweak} it holds that
\begin{align} \label{eq:localenergydecay}
 \Delta \E_N^n(\kappa):=\E(u_N^n)-\E(\widetilde u_{N,\kappa}^n) \ge  0,
\end{align}
for each $n\ge 0$.
\end{lemma}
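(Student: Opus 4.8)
The plan is to recognise the local update~\eqref{eq:locweak} as a single step of the iterative linearisation~\eqref{eq:iterationW} carried out on the finite-dimensional closed subspace $\WW:=\Vh(\Pref_\kappa;u_N^n)$, and then to invoke the energy-decay estimate of Proposition~\ref{pr:stab} directly. Since the local decay~\eqref{eq:localenergydecay} is exactly the per-step energy monotonicity already established in that proposition, no new computation is required: the entire content of the lemma is to verify that the local step fits the hypotheses of Proposition~\ref{pr:stab}.

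First I would observe that, by the very definition of the extended space $\Vh(\Pref_\kappa;u_N^n)=\Span\{\xi^1_\kappa,\ldots,\xi^{m_\kappa}_\kappa,u_N^n\}$, the current approximation $u_N^n$ is itself an element of $\WW$; moreover, $\WW$ is finite-dimensional and hence a closed linear subspace of $\VV$, so the subspace framework applies verbatim. Next I would identify $\widetilde u_{N,\kappa}^n$ as the first iterate produced by~\eqref{eq:iterationW} on $\WW$ when the (admissible) initial guess is taken to be $u_N^n\in\WW\subset\Lom{2}$: the local weak formulation $\B_{\Delta t}(\widetilde u_{N,\kappa}^n,v)=\ell_{\Delta t}(u_N^n;v)$ for all $v\in\WW$ is precisely~\eqref{eq:iterationW} with the relabelling $u^0:=u_N^n$ and $u^1:=\widetilde u_{N,\kappa}^n$. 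Since the assumptions of Proposition~\ref{pr:stab} are in force, its stability bound~\eqref{eq:stab} (valid on any closed subspace, in particular for the index $n=0$) then yields
\[
\E(u_N^n)-\E(\widetilde u_{N,\kappa}^n)\ge\gamma_f(\Delta t)\NN{\widetilde u_{N,\kappa}^n-u_N^n}^2_{\Delta t}\ge 0,
\]
where the final inequality uses $\gamma_f(\Delta t)>0$. This is exactly~\eqref{eq:localenergydecay}, for each fixed $n\ge 0$.

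The single point requiring care—and the crux of the argument—is the membership $u_N^n\in\WW$ on which everything hinges. This is precisely what guarantees that $\delta:=\widetilde u_{N,\kappa}^n-u_N^n$ lies in $\WW$, so that one may test~\eqref{eq:locweak} with $v=\delta$ and reproduce the identity $\dprod{\E'(u_N^n)}{\delta}=-\tfrac{1}{\Delta t}\NN{\delta}^2_{\Delta t}$ underlying the proof of Proposition~\ref{pr:stab}; it is exactly the deliberate inclusion of $u_N^n$ in the spanning set of $\Vh(\Pref_\kappa;u_N^n)$ that turns the local computation into a bona fide iteration step rather than an unrelated Galerkin projection. I do not anticipate any genuine obstacle beyond recording this observation, as the conditions on $\Delta t$ (namely $\nicefrac{1}{\Delta t}>\kappa_f$, or equivalently $\Delta t\in\G(\CC)$ with a suitable choice of $\lambda$) are inherited directly from the standing hypothesis that the assumptions of Proposition~\ref{pr:stab} hold.
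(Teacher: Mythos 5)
Your proposal is correct and follows exactly the paper's own argument: identify the local update~\eqref{eq:locweak} as one step of the iteration~\eqref{eq:iterationW} on the closed subspace $\WW=\Vh(\Pref_\kappa;u_N^n)$ with $u^0=u_N^n$ and $u^1=\widetilde u_{N,\kappa}^n$, and apply the energy-decay bound~\eqref{eq:stab} of Proposition~\ref{pr:stab}. Your additional remark that the inclusion of $u_N^n$ in the spanning set of $\Vh(\Pref_\kappa;u_N^n)$ is what legitimises this identification is a useful elaboration of a point the paper leaves implicit.
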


\begin{proof}
We define the subspace $\WW=\Vh(\Pref_\kappa;u_N^n)\subset\VV$. Then, the claim follows immediately from Proposition~\ref{pr:stab} upon observing that~\eqref{eq:locweak} corresponds to the iteration~\eqref{eq:iterationW} with $u^0=u_N^n$ and $u^1=\widetilde u_{N,\kappa}^n$.
\end{proof}

\begin{algorithm}
\caption{Energy-based adaptive mesh refinement}
\label{alg:refen}
\begin{algorithmic}[1]
\State Prescribe a mesh refinement parameter~$\theta\in(0,1)$. 
\State Input a finite element mesh~$\mathcal{T}_N$, and a finite element function $u^{n}_N \in \VV_N$.
\For {all elements $\kappa\in\mathcal{T}_N$}
	\State\multiline{\textsc{Solve} one local discrete iteration step~\eqref{eq:locweak} in the low-dimensional space $\Vh(\Pref_\kappa;u_N^n)$ to obtain a potentially improved local approximation~$\widetilde u_{N,\kappa}^n$.}  
	\State \textsc{Compute} the local energy decay~$\Delta \E_N^n(\kappa)$ from~\eqref{eq:localenergydecay}.    
\EndFor
\State \textsc{Mark} a subset ~$\mathcal{K} \subset \mathcal{T}_N$ of minimal cardinality which fulfils the D\"orfler marking criterion
\[
\sum_{\kappa \in \mathcal{K}} \Delta \E_N^n(\kappa) \geq \theta \sum_{\kappa \in \mathcal{T}_N} \Delta \E_N^n(\kappa).
\]
\State \textsc{Refine} all elements in~$\mathcal{K}$ for the sake of generating a new mesh~$\mathcal{T}_{N+1}$.
\end{algorithmic}
\end{algorithm}

The value $\Delta \E_N^n(\kappa)$ from~\eqref{eq:localenergydecay} indicates the potential energy reduction due to a refinement of the element $\kappa$. This observation motivates the energy-based adaptive mesh refinement procedure outlined in Algorithm~\ref{alg:refen}.   
From a practical point of view, we remark that the evaluation of~\eqref{eq:localenergydecay} requires a global integration for any element $\kappa \in \mathcal{T}_N$; this is computationally expensive if the dimension of the underlying finite element space is large. A possible remedy is to employ the energy expansion from Lemma~\ref{lem:EE} about $u_N^n$, which yields
\begin{align*}
 -\Delta \E_N^n(\kappa)=\E(\widetilde u_{N,\kappa}^n)-\E(u_N^n)\approx \dprod{\E'(u_N^n)}{\widetilde u_{N,\kappa}^n-u_N^n}.
\end{align*}  
Then, noting the unique linear combination
\[
\widetilde u_{N,\kappa}^n=\alpha u_N^n+\eta^n_{N,\kappa},
\]
with appropriate $\alpha\in\mathbb{R}$ and $\eta^n_{N,\kappa}\in\VV(\Pref_\kappa)$, and recalling~\eqref{eq:Eid} as well as~\eqref{eq:locweak}, we see that
\begin{align*}
\dprod{\E'(u_N^n)}{\widetilde u_{N,\kappa}^n-u_N^n}
&=\frac{1}{\Delta t}\left(\B_{\Delta t}(u_N^n,\widetilde u_{N,\kappa}^n-u_N^n)-\ell_{\Delta t}(u_N^n;\widetilde u_{N,\kappa}^n-u_N^n)\right)\\
&=\frac{1}{\Delta t}\B_{\Delta t}(u_N^n-\widetilde u_{N,\kappa}^n,\widetilde u_{N,\kappa}^n-u_N^n)\\
&=-\frac{1}{\Delta t}
\left(
(\alpha-1)^2\NN{u_N^n}^2_{\Delta t}
+\NN{\eta_{N,\kappa}^n}^2_{\Delta t}
+2(\alpha-1)\B_{\Delta t}(u_N^n,\eta_{N,\kappa}^n)
\right).
\end{align*}
In the above approximation, we observe that there is only one global integration, namely for the term $\NN{u_N^n}_{\Delta t}$, which is the same for all elements; the remaining two terms involve the locally supported function $\eta^n_{N,\kappa}$ on the patch $\omega_\kappa$. For more details concerning the computational complexity of Algorithm~\ref{alg:refen} we refer to~\cite[\S 3.5]{HeidStammWihler:19}, albeit the setting in the current article is slightly different.   

\subsection{Numerical experiments}\label{sec:numerics}

We will now perform some numerical tests in two spatial dimensions, with  Cartesian  coordinates  denoted  by $\x=(x,y)\in\mathbb{R}^2$. The finite element spaces consist of elementwise affine functions, i.e., we let $p=1$ in~\eqref{eq:femspace} and~\eqref{eq:Vloc}. In  all  examples,  we set $\alpha=\theta=\nicefrac12$ and employ the newest vertex bisection method~\cite{Mitchell:91} for the refinement in line~8 of Algorithm~\ref{alg:refen}, which yields shape-regular locally refined meshes. All our computations are initiated on a uniform and coarse triangulation of $\Omega$, and the starting guess is chosen as~$u_0^0 \equiv 0$.

\subsubsection{Convergence of the error} We begin by running an experiment with a manufactured solution.
 
\experiment \label{exp:SineGordon}
We consider the sine-Gordon type problem
\begin{subequations}\label{eq:ex1}
\begin{alignat}{2}
-\Delta u &= - \sin(u) -u+g(\x)  &\qquad& \text{in } \Omega,  \\
u&=0  && \text{on } \partial \Omega,  
\end{alignat}
\end{subequations}
i.e., the model~\eqref{eq:poisson} with the reaction term $f(\x,u)=-\sin(u)-u+g(\x)$, on the square domain $\Omega:=(0,1)^2$, where the function
\[
g(\x)=\sin(\pi x)\sin(\pi y)+2 \pi^2 \sin(\pi x) \sin(\pi y)+\sin(\sin(\pi x) \sin(\pi y))
\]
is constructed in such a way that $u^\star(\x)=\sin(\pi x) \sin(\pi y)$ solves~\eqref{eq:ex1}. For any $0<\lambda \leq 1$, we note that
$
\sigma_f(\lambda)=\sup_{u \in \mathbb{R}} \left|-\cos(u)-1+\nicefrac{1}{\lambda}\right|=\nicefrac{1}{\lambda},
$
whence it follows that $ \lambda \in\G(\rho)$ for any $\rho>0$ in Assumption~\ref{apt:A1}. In particular, due to Theorem~\ref{thm:convergence}, we conclude that the solution of~\eqref{eq:ex1} is unique, and that we can choose any step size $0<\Delta t \leq 1$. Rather than selecting the supposedly obvious choice $\Delta t=1$, we will set $\Delta t= \nicefrac{1}{2}$, which leads to a better performance of our algorithm for the given example.

In Figure~\ref{fig:SineGordonEx} (left) we plot the error $\errH:=\norm{\nabla(u^\star-\u{})}_{\L^2(\Omega)}$ against the number of degrees of freedom (henceforth denoted  by $\dof$), and, in addition, the number of iteration steps on each of the discrete spaces. We observe an (almost) optimal decay of the error of order $\mathcal{O}(\dof^{-\nicefrac12})$, whereby one iterative linearisation step is sufficient on each discrete space. Moreover, since we have at our disposal the global minimiser $u^\star$ of the underlying energy functional $\E$, we will further plot the energy contraction ratio
\begin{align} \label{eq:QN}
Q_N:=\frac{\E(\up{})-\E(u^\star)}{\E(\u{})-\E(u^\star)}, \qquad N \geq 0,
\end{align}
which is closely related to the factor $q$ from~\eqref{eq:spaceenergycontraction}, cf.~Remark~\ref{rem:spaceconctraction}. We note that $\E':\VV \to \VV'$ is strongly monotone in the experiment under consideration, and that $Q_N$ stabilises after an initial phase close to the value~$0.6$, see Figure~\ref{fig:SineGordonEx} (right).

\begin{figure}
\hfill
	\includegraphics[width=0.49\textwidth]{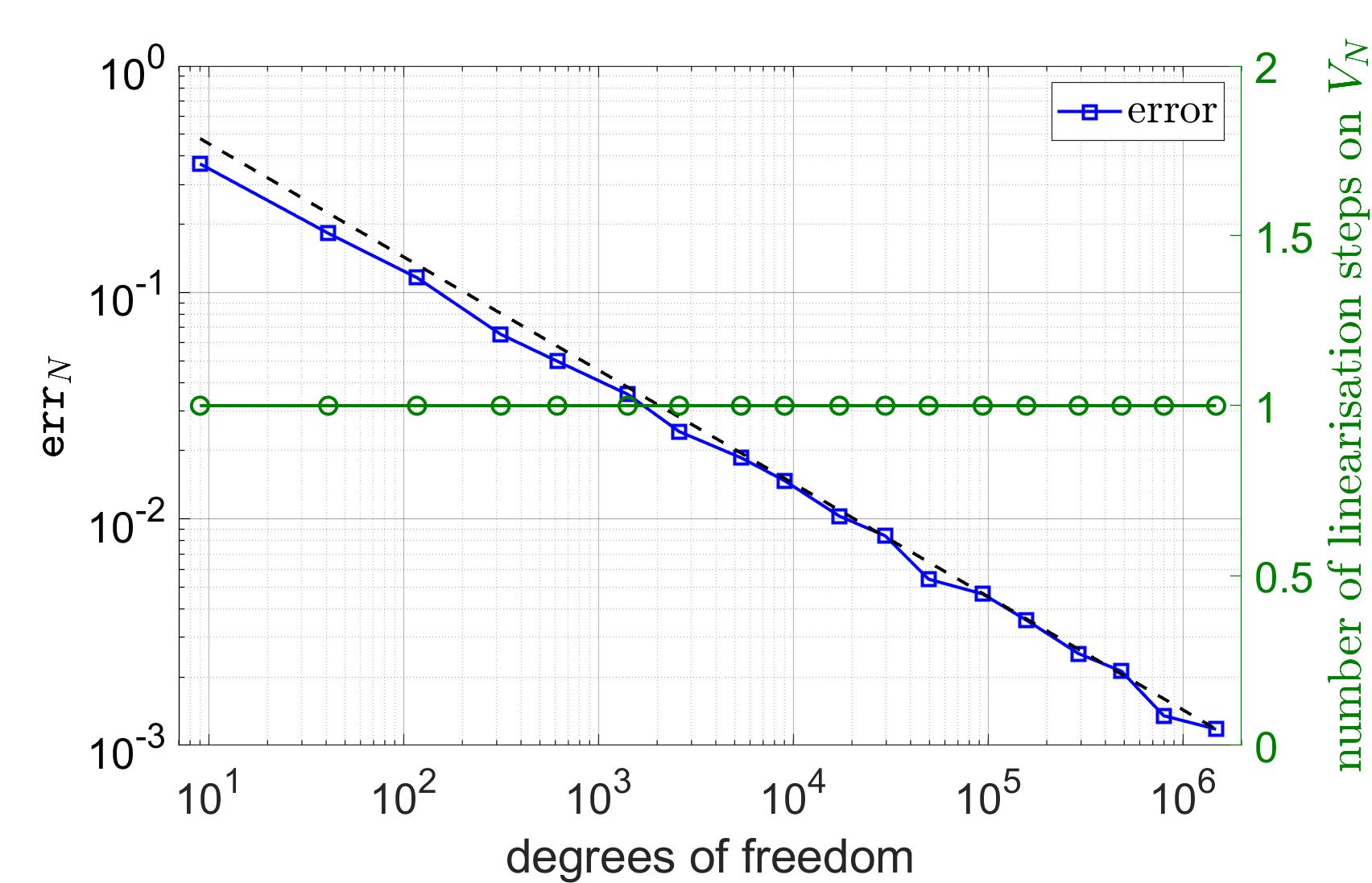}
	\hfill
	\includegraphics[width=0.49\textwidth]{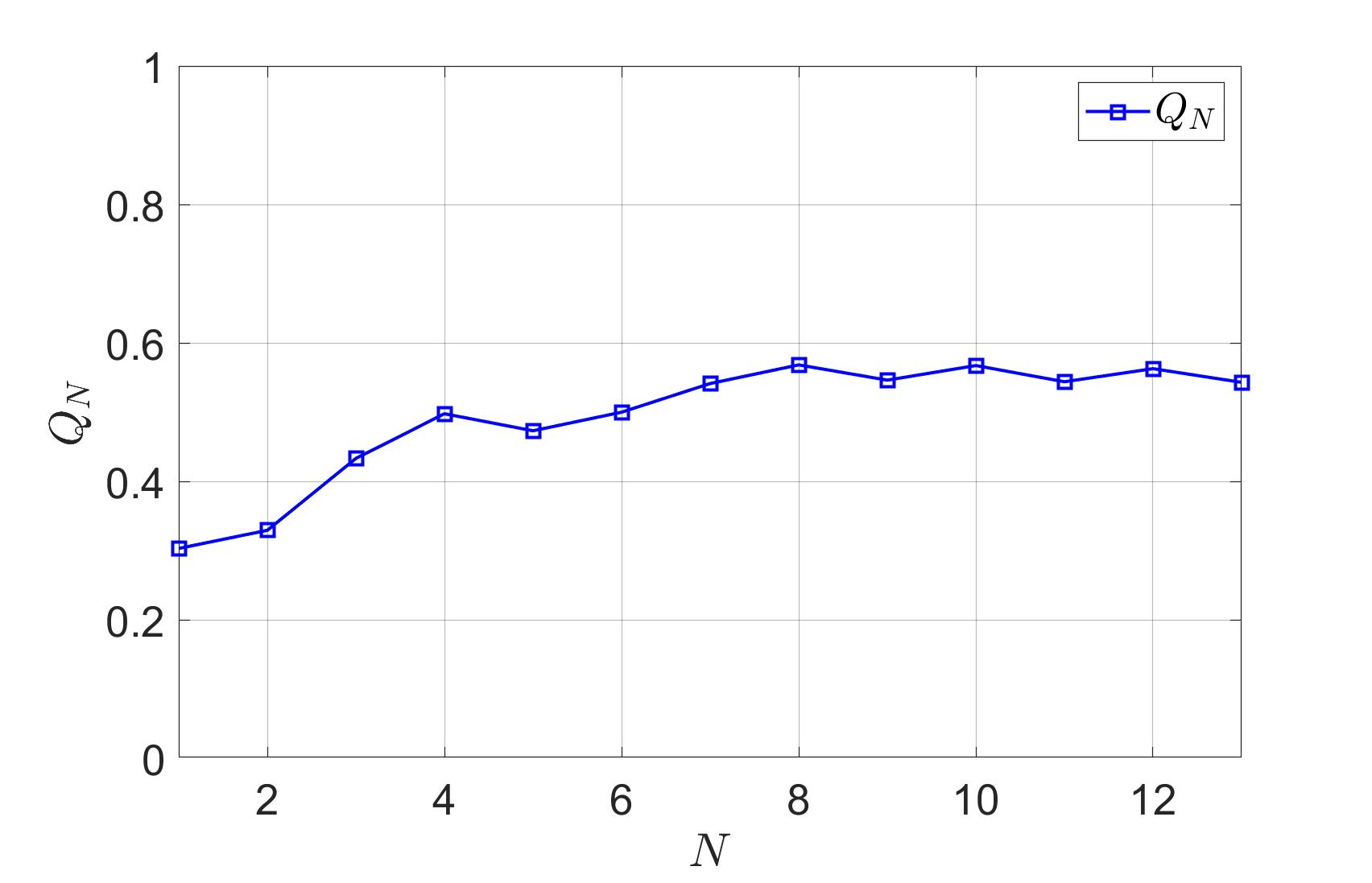}
	\hfill
	\caption{Experiment~\ref{exp:SineGordon}. Left: Convergence plot for the error, whereas the black dashed line indicates the optimal convergence rate of $\mathcal{O}(\dof^{-\nicefrac12})$. Right: Plot of the energy contraction factor $Q_N$.}
	\label{fig:SineGordonEx}
\end{figure}

\subsubsection{Convergence of the energy}\label{sc:ce}
In the experiments below, an analytical expression for a solution is no longer available. In order to still be able to perform a numerical study, we first apply our adaptive procedure for the purpose of computing a reference energy value based on $\mathcal{O}(10^6)$ degrees of freedom. Subsequently, we rerun the algorithm until the number of degrees of freedom exceeds $10^5$ (unless stated differently). We monitor the energy approximation by displaying the difference of the reference energy value and the energy value on each given discrete space. On a side note, we remark that we have also examined the decay of a standard residual estimator with respect to $\dof$, see, e.g.,~\cite{Verfurth:13}, whereby we have observed an (almost) optimal rate of order~$\mathcal{O}(\dof^{-\nicefrac12})$ (not included here).

\experiment \label{exp:SineGordonEnergy}
We consider a singularly perturbed sine-Gordon type equation
\begin{equation*}
\begin{aligned}
-\varepsilon \Delta u &= - \sin(u) -u+1  \quad&& \text{in } \Omega, \\
u&=0 \quad && \text{on } \partial \Omega,  
\end{aligned}
\end{equation*}
with $\Omega:=(0,1)^2$ and $0<\varepsilon\ll1$. Even though this problem is strongly perturbed, it fits into the framework of our analysis. Indeed, after dividing the partial differential equation by $\varepsilon$, we let $f(u)=\varepsilon^{-1}(-\sin(u)-u+1)$, and consider $0<\lambda \leq \varepsilon$ in~\eqref{eq:sigma}; for this range of $\lambda$ it holds
$
\sigma_f(\lambda)
=\nicefrac{1}{\lambda},
$
which shows that Assumption~\ref{apt:A1} is satisfied for any $\rho>0$. Hence, the situation is similar as in the previous Example~\ref{exp:SineGordon}, in particular, Theorem~\ref{thm:convergence} is applicable for the step size $\Delta t=\nicefrac{\varepsilon}{2}$. We experiment with the singular perturbation parameter $\varepsilon=10^{-5}$, which leads to extremely sharp boundary layers along~$\partial\Omega$; see Figure~\ref{fig:SineGordon} (left). Nonetheless, the energy error decays, after an initial phase of reduced convergence (during which the boundary layers are detected), at an almost optimal rate of $\mathcal{O}(\dof^{-1})$ as illustrated in Figure~\ref{fig:SineGordonQuotient} (left). This is due to the adaptive mesh refinement strategy, which properly detects the boundary layers, see Figure~\ref{fig:SineGordon} (right).
\begin{figure}
	\hfill
	\includegraphics[width=0.49\textwidth]{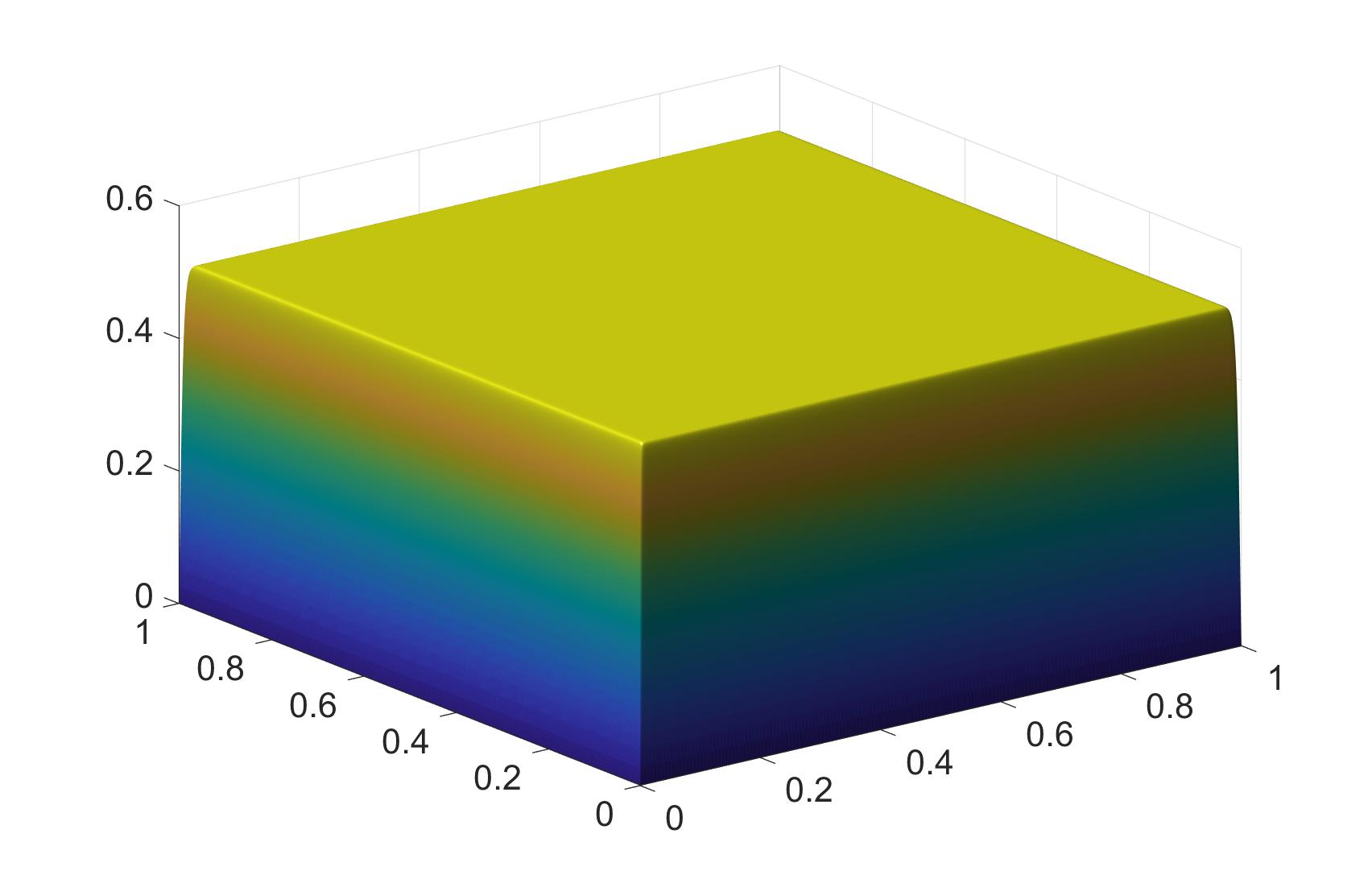}
	\hfill
	\includegraphics[width=0.49\textwidth]{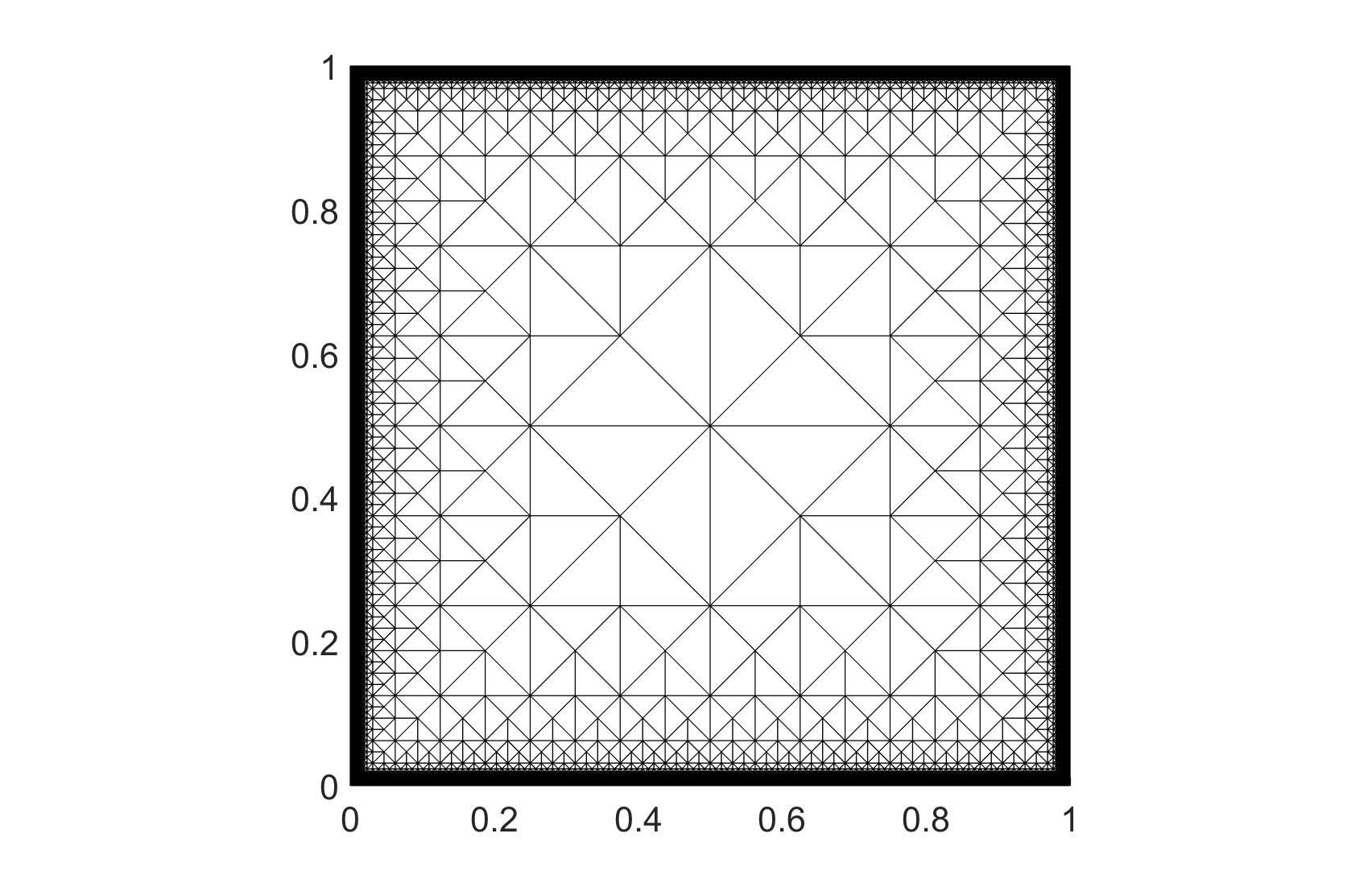}
	\hfill
	\caption{Experiment~\ref{exp:SineGordonEnergy}. Left: Approximated solution. Right: Adaptively refined mesh.}
	\label{fig:SineGordon}
\end{figure}
Furthermore, here and the following experiments, we depict the quotient $Q_N$, cf.~\eqref{eq:QN}, against the number of mesh refinements, whereby we use the computed reference energy value in place of $\E(u_{\VV}^\star)$. As we can observe in Figure~\ref{fig:SineGordonQuotient} (right), the quotient $Q_N$ stays uniformly away from 1; in particular, this contraction factor is bounded from above by $0.7$.

\begin{figure}
\hfill
\includegraphics[width=0.49 \textwidth]{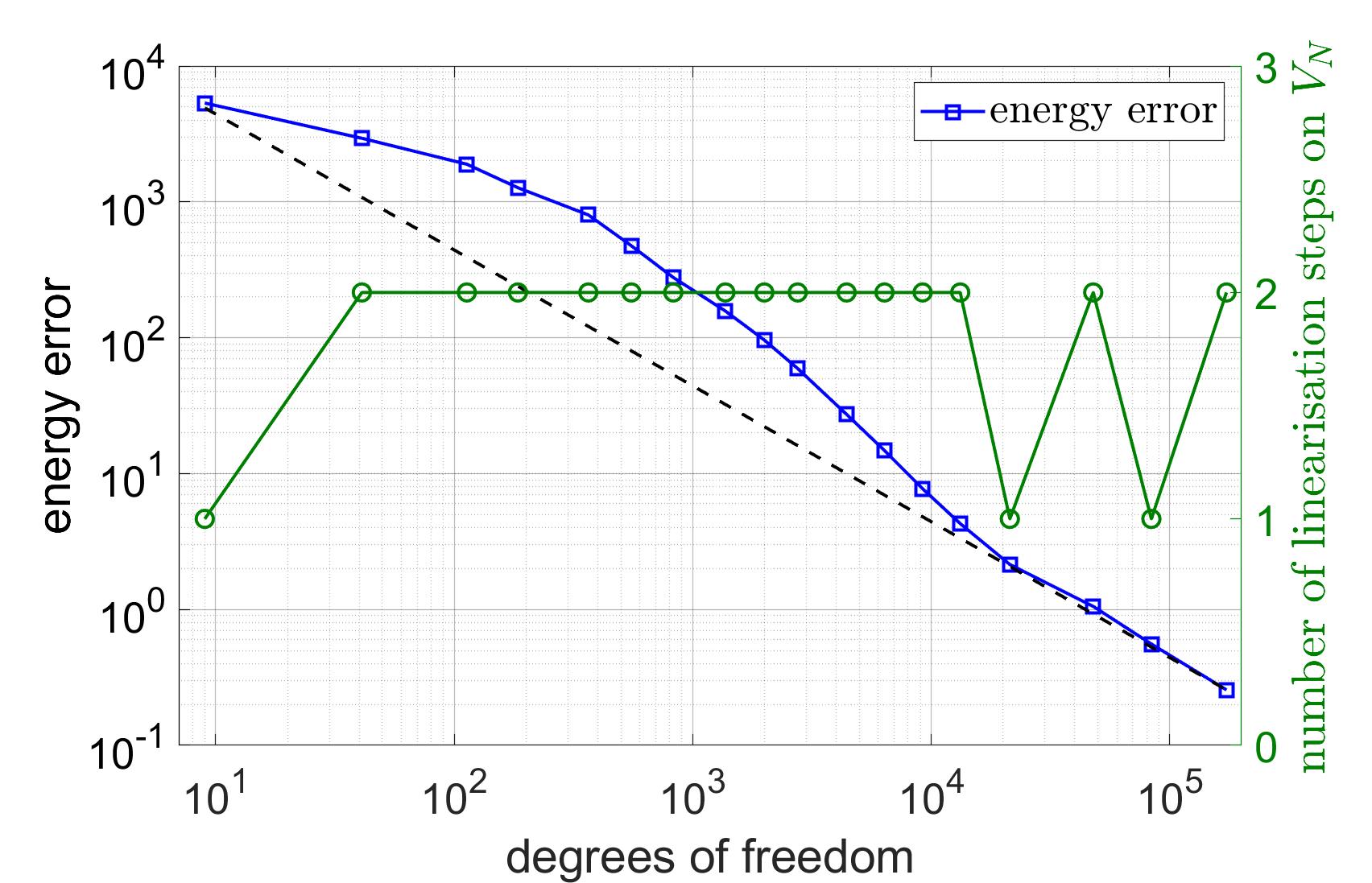}
\hfill
	\includegraphics[width=0.49\textwidth]{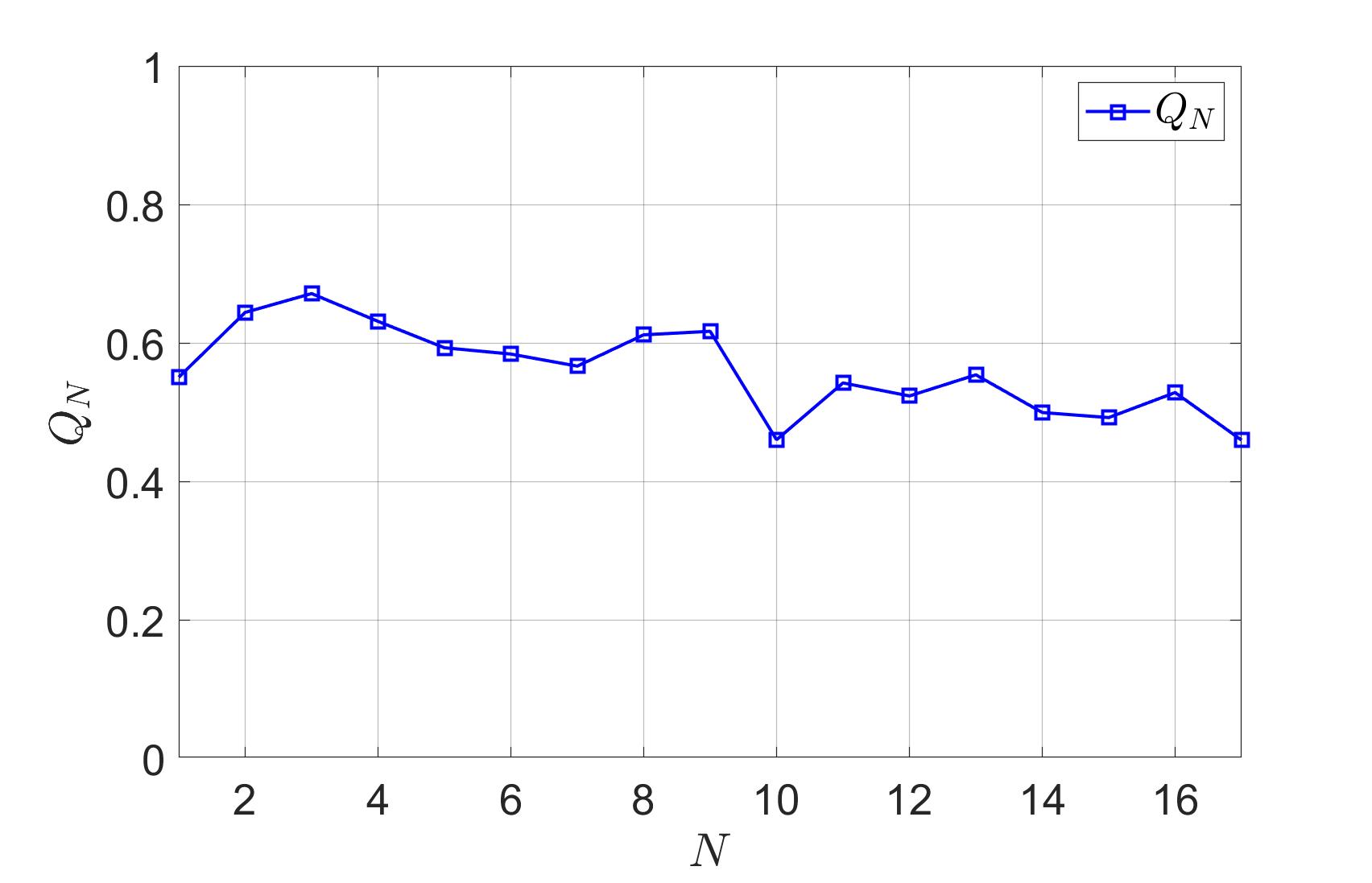}
	\hfill
	\caption{Experiment~\ref{exp:SineGordonEnergy}. Left: Convergence plot for the energy error. Right: Plot of the energy contraction factor $Q_N$.}
	\label{fig:SineGordonQuotient}
\end{figure}

\experiment \label{exp:Lshape}
Let us examine the problem 
\begin{equation*}
\begin{aligned}
-\varepsilon \Delta u &= \exp(-u^2)  \quad&& \text{in } \Omega, \\
u&=0 \quad && \text{on } \partial \Omega,  
\end{aligned}
\end{equation*}
on the L-shaped domain $\Omega:=(0,2)^2 \setminus [1,2] \times [0,1]$, with $\varepsilon=10^{-2}$. For the source function $f(u):=\varepsilon^{-1}\exp(-u^2)$ we find that
\[
\sigma_f(\lambda)=\frac{\sqrt2}{\varepsilon}\exp\left(-\nicefrac12\right)+\frac{1}{\lambda} 
<\frac{1}{\varepsilon}+\frac{1}{\lambda},
\] 
which shows, for any fixed $\rho \geq \nicefrac{1}{\varepsilon}$, that $\lambda \in \Lambda_f(\rho)$ for all $\lambda>0$. Recalling Proposition~\ref{pr:res}, we find that the step size $\Delta t=\varepsilon$ is an admissible choice for our numerical experiment. As before, the convergence rate of the energy error is optimal, see Figure~\ref{fig:Lshape} (right). Furthermore, the space energy contraction factor is clearly bounded away from one, and is even slightly decreasing for an increasing number of refinement steps, cf.~Figure~\ref{fig:LshapeQuotient}.

\begin{figure}
	\hfill
	\includegraphics[width=0.49\textwidth]{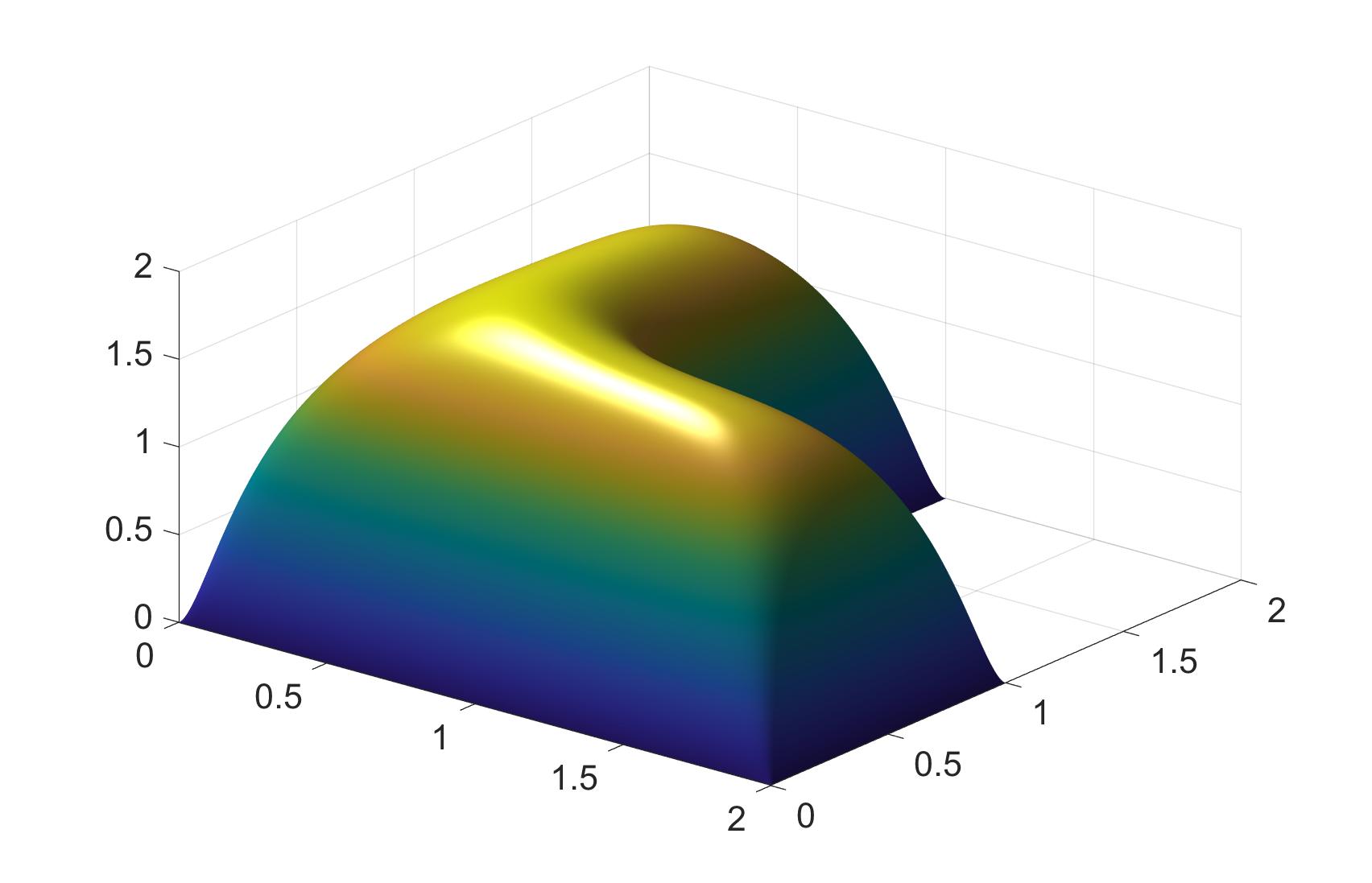}
	\hfill
	\includegraphics[width=0.49\textwidth]{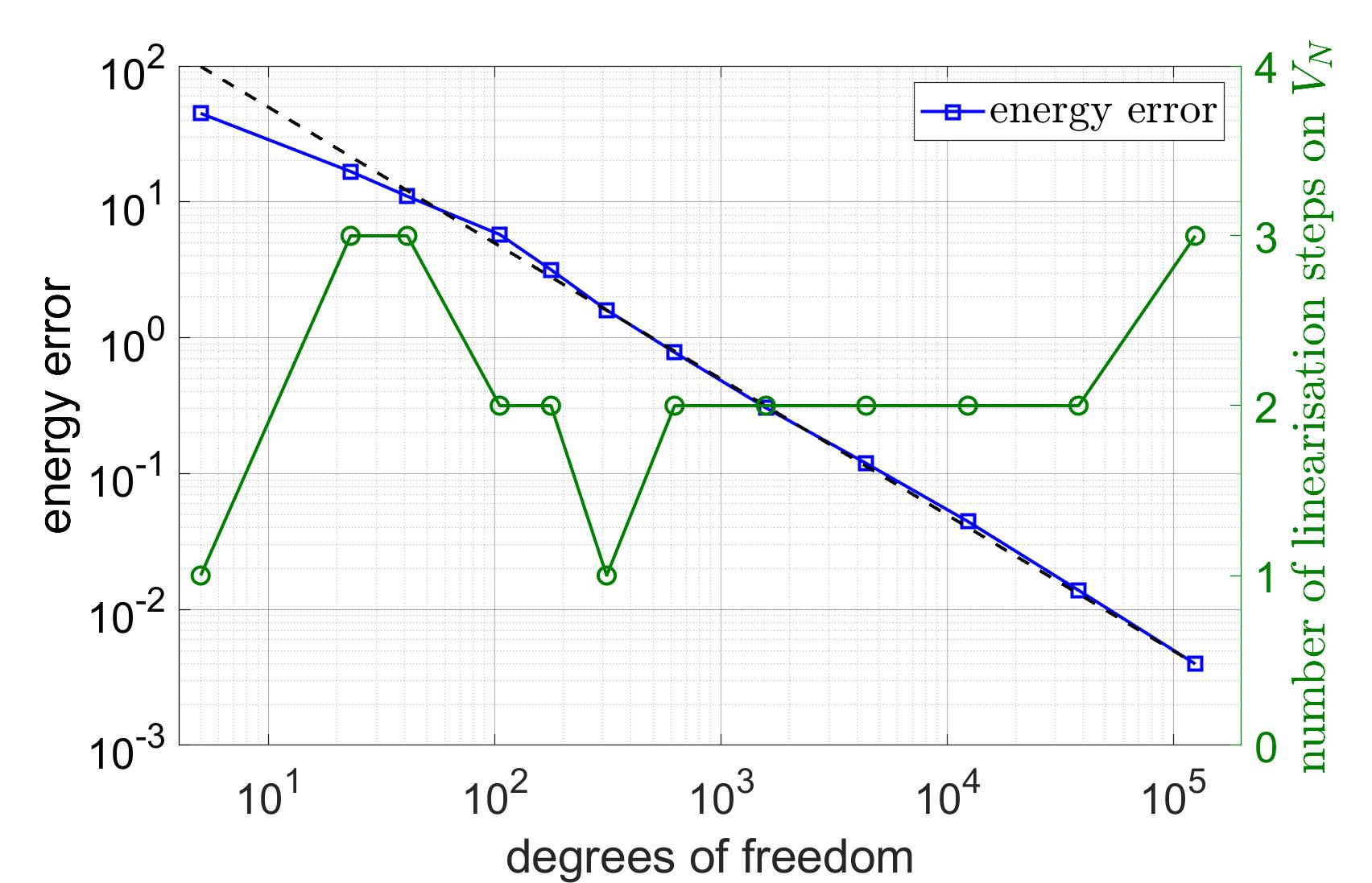}
	\hfill
	\caption{Experiment~\ref{exp:Lshape}. Left: Approximated solution. Right: Convergence plot for the energy error.}
	\label{fig:Lshape}
\end{figure}

\begin{figure}
\centering
	\includegraphics[width=0.49\textwidth]{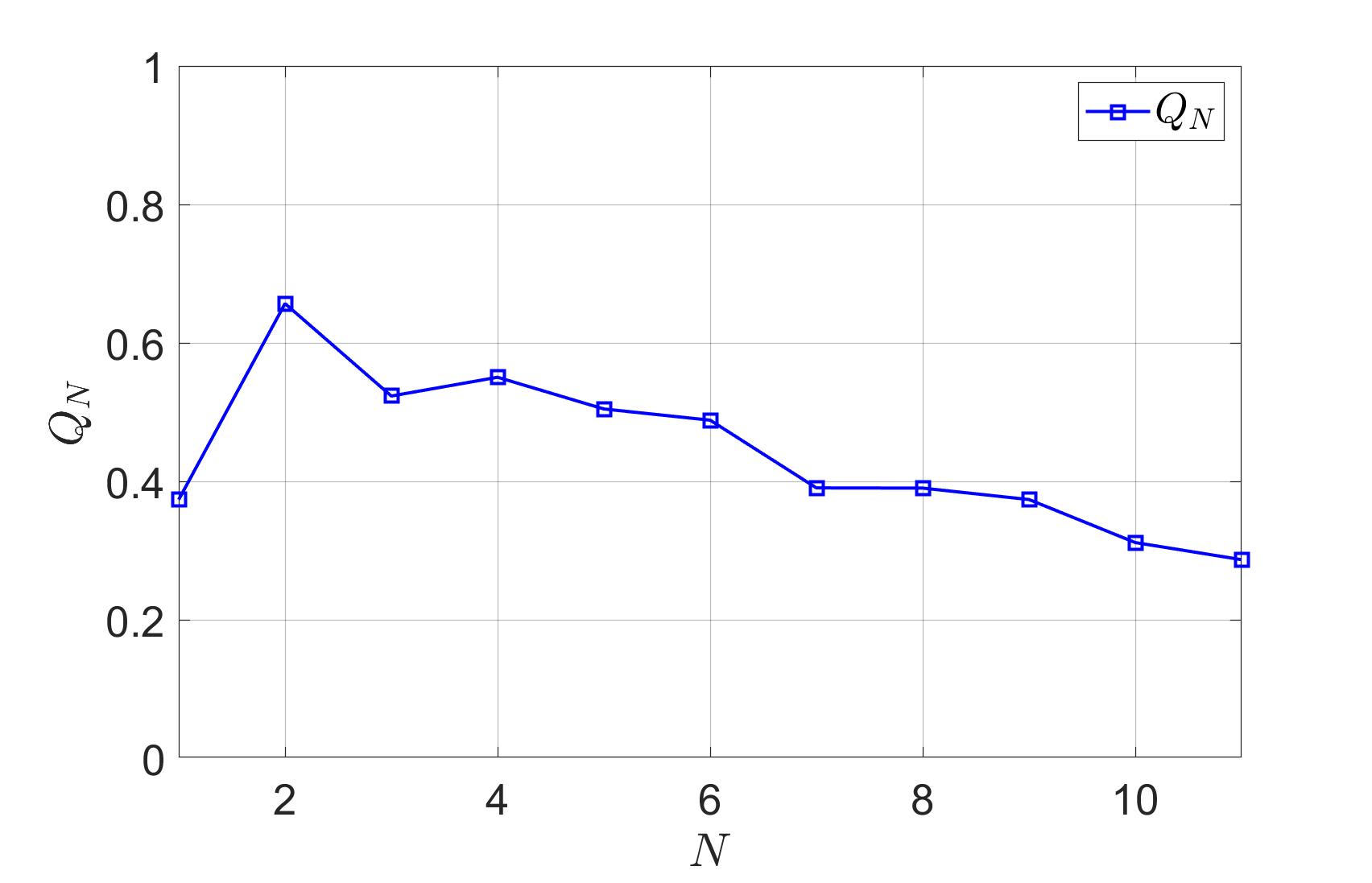}
	\caption{Experiment~\ref{exp:Lshape}: Plot of the energy contraction factor $Q_N$.}
	\label{fig:LshapeQuotient}
\end{figure}

\experiment \label{exp:Arrhenius}
Next we will consider a diffusion-reaction equation with an Arrhenius type production term on the unit square $\Omega:=(0,1)^2$. More specifically, we seek a weak solution $u \in \H^1(\Omega)$ of the Dirichlet boundary value problem
\begin{equation*}
\begin{aligned}
-\Delta u &= (1-|u|)\exp(-\nicefrac{1}{|u|})  \quad&& \text{in } \Omega, \\
u&=2 \quad && \text{on } \partial \Omega;  
\end{aligned}
\end{equation*}
evidently, by standard manipulations, this can be transformed into a problem with homogeneous Dirichlet boundary conditions. We select the step size $\Delta t=1$. Here, since the underlying energy functional involves the exponential integral function $\Ei(\cdot)$, whose evaluation consumes considerable computational time, we determine the reference energy value by running our algorithm until the number of degrees of freedom exceeds $10^5$, and subsequently, for our plots, stop the algorithm as soon as the number of degrees of freedom is greater than $10^4$. Once more, an (almost) optimal convergence rate can be observed in Figure~\ref{fig:Arrhenius} (left). Furthermore the quotient $Q_N$ from~\eqref{eq:QN} is uniformly bounded from above by $0.6$, see Figure~\ref{fig:Arrhenius} (right).

\begin{figure}
\hfill
	\includegraphics[width=0.49\textwidth]{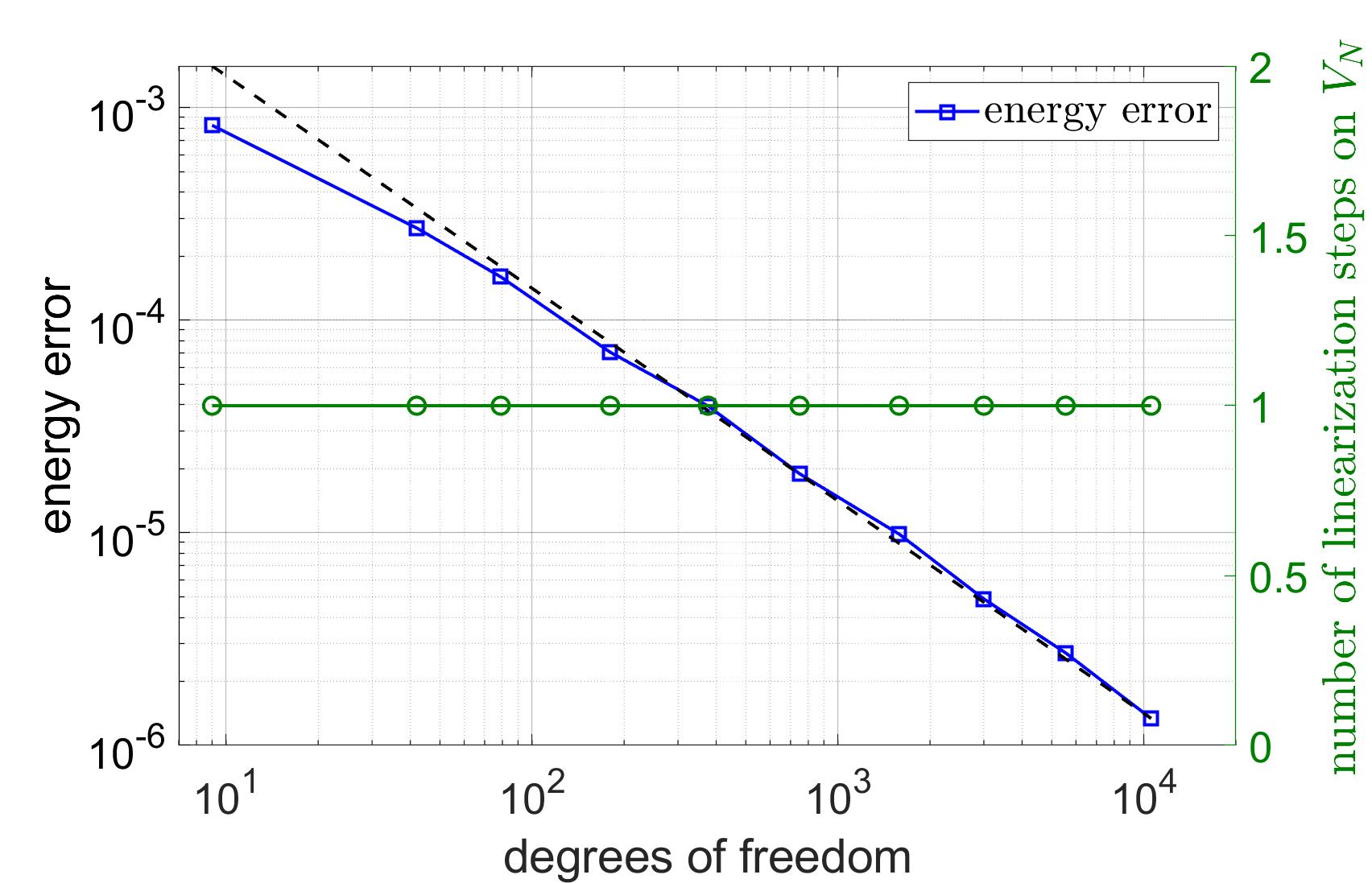}
	\hfill
	\includegraphics[width=0.49\textwidth]{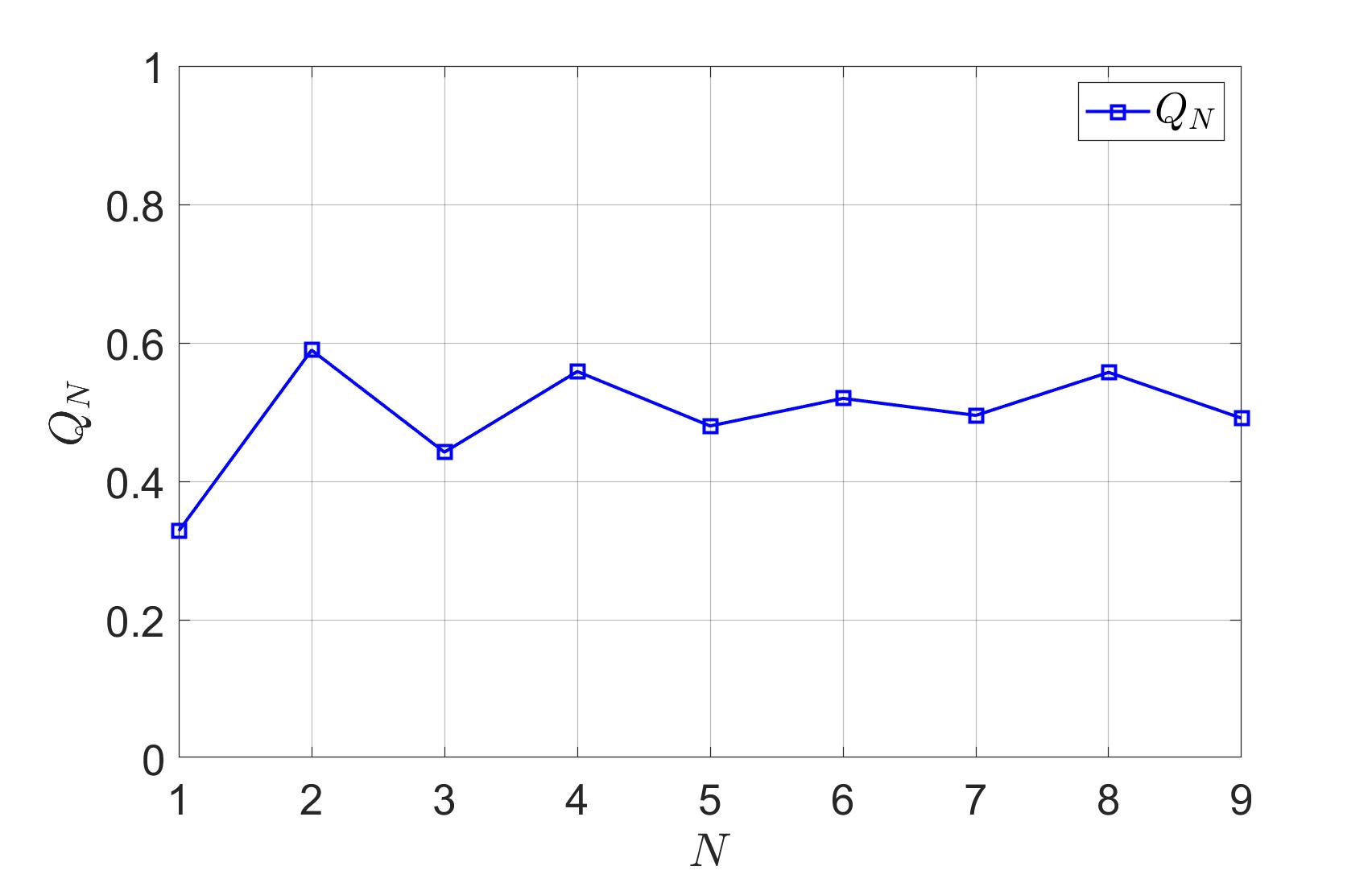}
	\hfill
	\caption{Experiment~\ref{exp:Arrhenius}. Left: Convergence plot for the energy error. Right: Plot of the energy contraction factor $Q_N$.}
	\label{fig:Arrhenius}
\end{figure}

\experiment \label{exp:sign}
For a further demonstration of the effectiveness of our proposed mesh refinement strategy, we study a singularly perturbed problem, with a reaction term that features a discontinuity in the domain $\Omega:=(0,1)^2$: 
\begin{equation*}
\begin{aligned}
-10^{-8} \Delta u &= -u + \mathrm{sign}(x-\nicefrac{1}{2})  \quad&& \text{in } \Omega, \\
u&=0 \quad && \text{on } \partial \Omega.
\end{aligned}
\end{equation*} 
Since this problem is linear, the iteration procedure~\eqref{eq:iterationW} consists of one single step (and solves~\eqref{eq:weakW} exactly). In addition to the boundary layers near $\partial \Omega$, there is also a layer along the discontinuity $x=\nicefrac{1}{2}$ as depicted in Figure~\ref{fig:SignMesh} (left). Still we obtain once more a convergence rate that is close to optimal, see Figure~\ref{fig:sign} (left), thanks to the local mesh refinements along the layers in the solution as illustrated in Figure~\ref{fig:SignMesh} (right). Finally, the contraction quotient $Q_N$ is bounded from above by $0.72$, at least for the region portrayed in Figure~\ref{fig:sign} (right). 

\begin{figure}
	\hfill
	\includegraphics[width=0.49\textwidth]{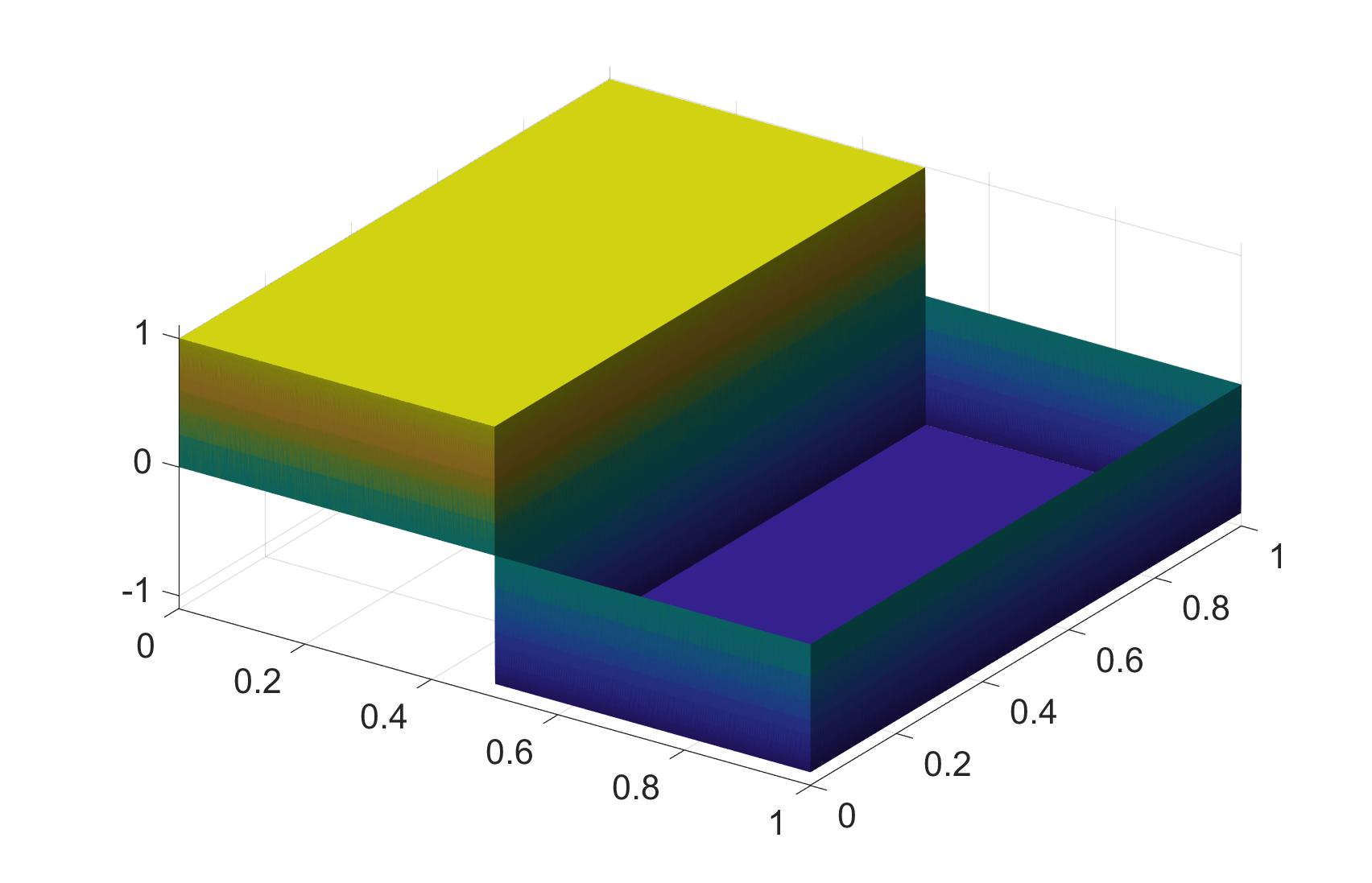}
	\hfill
	\includegraphics[width=0.49\textwidth]{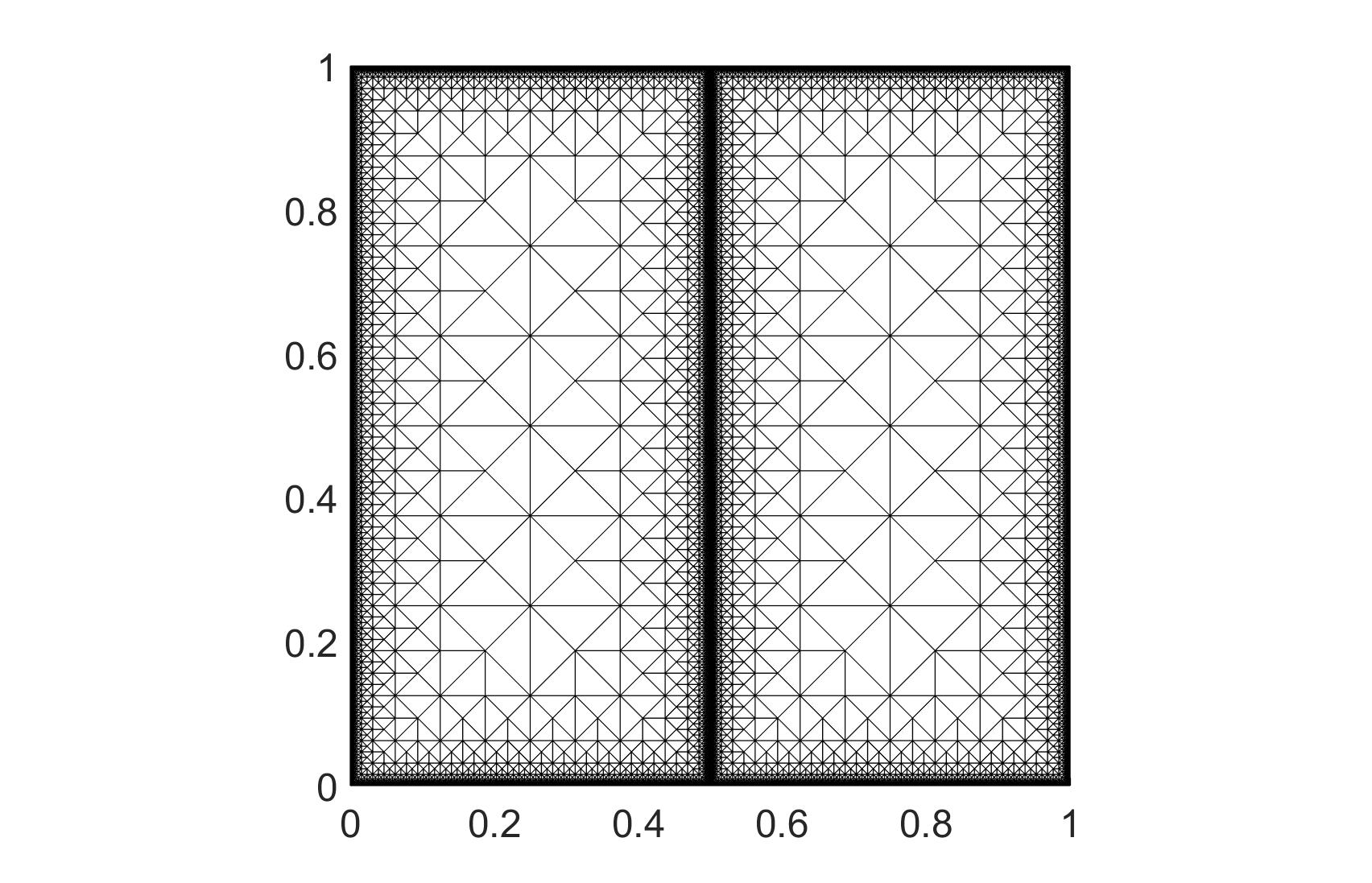}
	\hfill
	\caption{Experiment~\ref{exp:sign}. Left: Approximated solution. Right: Adaptively refined mesh.}
	\label{fig:SignMesh}
\end{figure}

\begin{figure}
\hfill
	\includegraphics[width=0.49\textwidth]{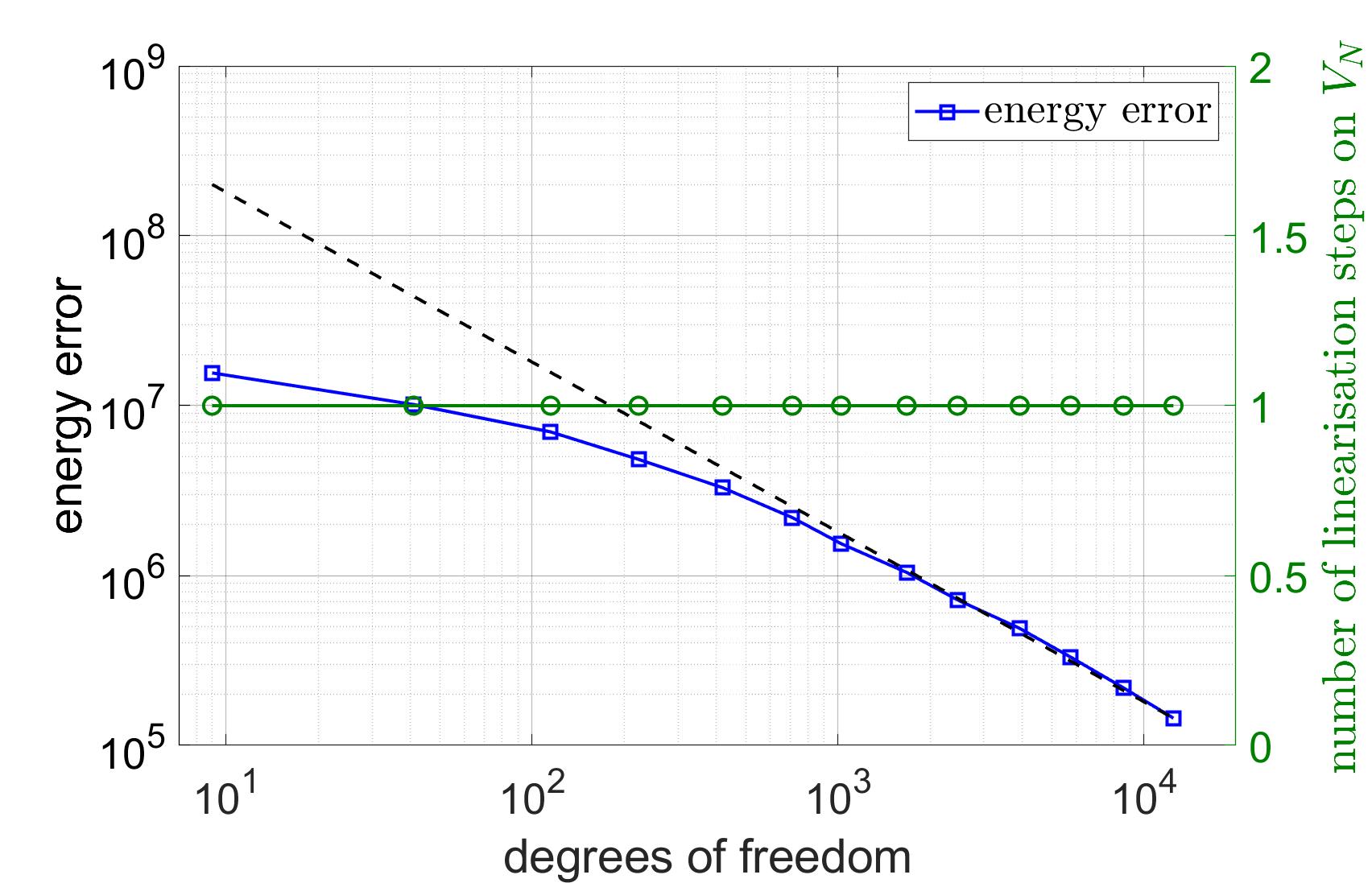}
	\hfill
		\includegraphics[width=0.49\textwidth]{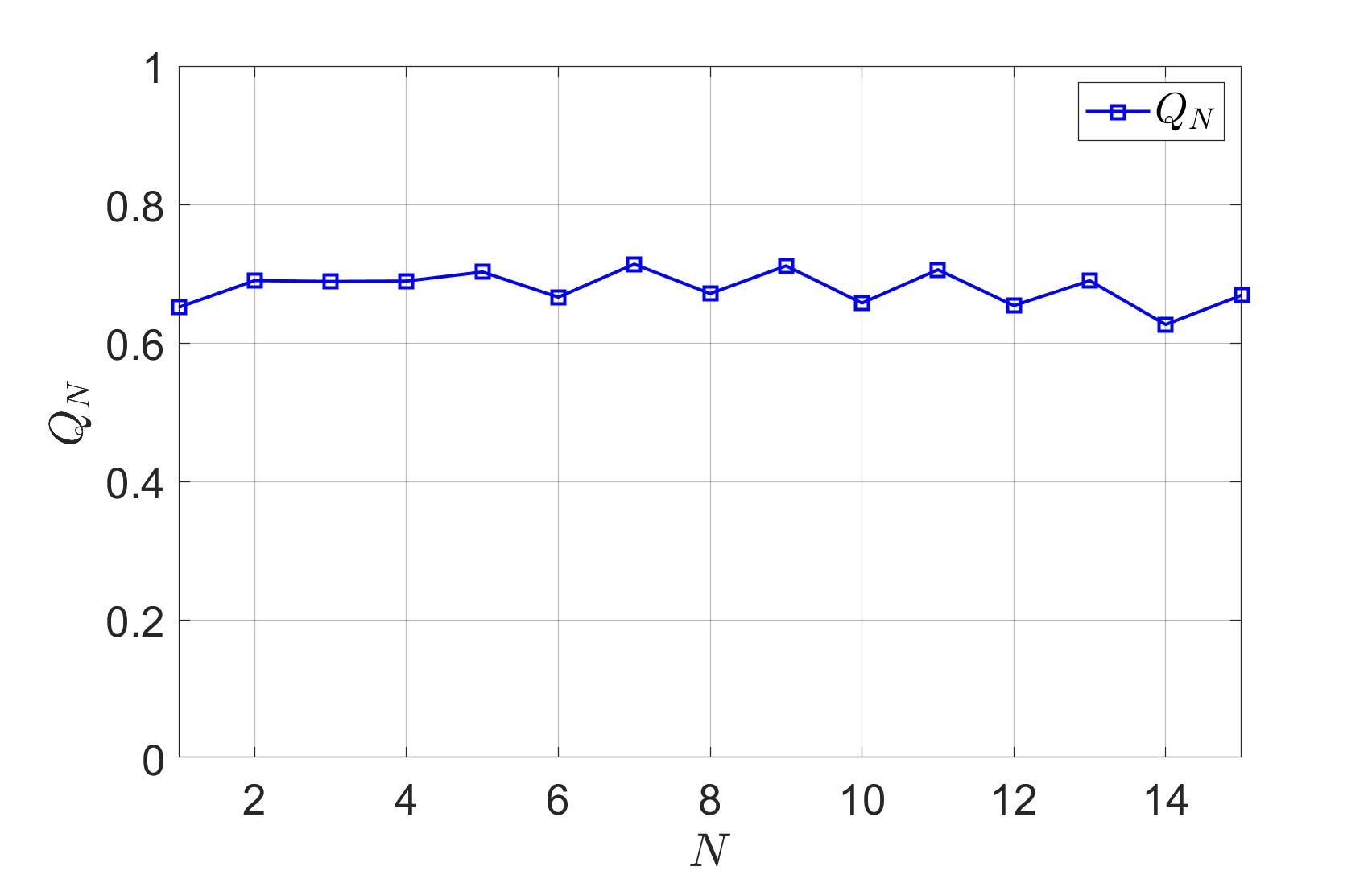}
	\hfill
	\caption{Experiment~\ref{exp:sign}. Left: Convergence plot for the energy error. Right: Plot of the energy contraction factor $Q_N$.}
	\label{fig:sign}
\end{figure}

\experiment \label{exp:oscillation}
In our final experiment, we study a linear singularly perturbed problem with a sign-changing reaction coefficient in the underlying domain $\Omega:=(0,1)^2$:
\begin{equation*}
\begin{aligned}
-\varepsilon \Delta u(x,y) &=(x-\nicefrac12)u(x,y) + 1  \quad&& \text{in} \ \Omega, \\
u(x,y)&=0 \quad && \text{on} \ \partial \Omega.  
\end{aligned}
\end{equation*} 
For $0<\varepsilon\ll 1$, we expect oscillations in the solution as the partial differential operator is no longer coercive in areas where the factor $\varepsilon^{-1}(x-\nicefrac12)$ is strongly positive, cp.~\cite[Example~2]{MelenkWihler:15}. In order to account for this deficiency, we start our computations with a sufficiently fine initial mesh. Furthermore, since the problem is again linear, as in the previous Example~\ref{exp:sign}, we directly solve the discrete system, i.e., without employing the iteration scheme~\eqref{eq:iterationW}. In Figure~\ref{fig:oscillationsurf} (left), for $\varepsilon=10^{-3}$, we observe that the approximated solution indeed exhibits oscillations in the region $\Omega_{+}:=\{(x,y) \in \Omega: x-0.5>0\}$. The energy tends to an optimal rate of $\mathcal{O}(\dof^{-1})$, see Figure~\ref{fig:oscillationsurf} (right), and the quotient $Q_N$ is approximately $0.5$, cf.~Figure~\ref{fig:oscillationquotient}.

\begin{figure}
	\hfill
	\includegraphics[width=0.49\textwidth]{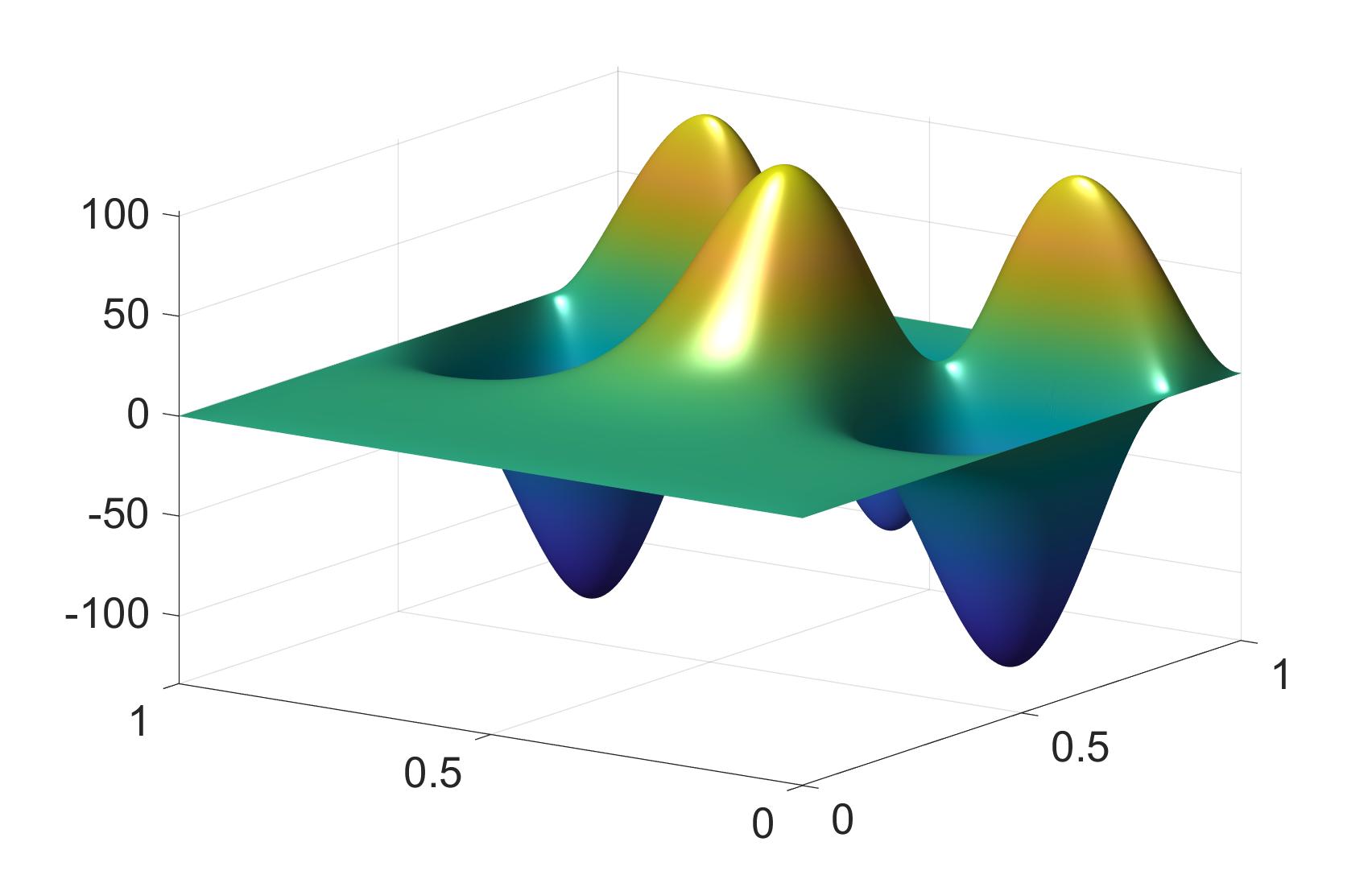}
	\hfill
	\includegraphics[width=0.49\textwidth]{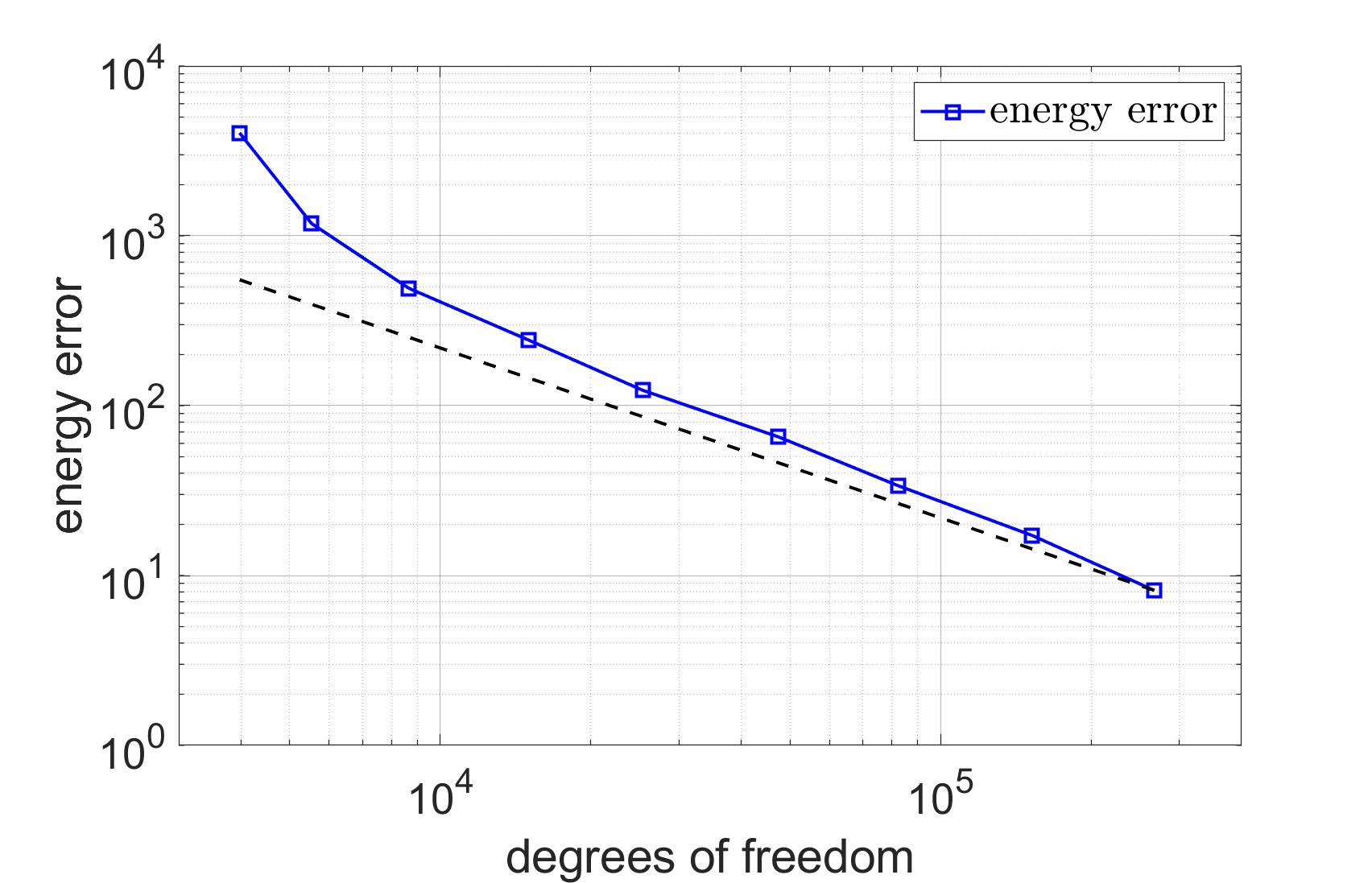}
	\hfill
	\caption{Experiment~\ref{exp:oscillation}. Left: Approximated solution. Right: Convergence plot for the energy error.}
	\label{fig:oscillationsurf}
\end{figure}

\begin{figure}
\centering
	\includegraphics[width=0.49\textwidth]{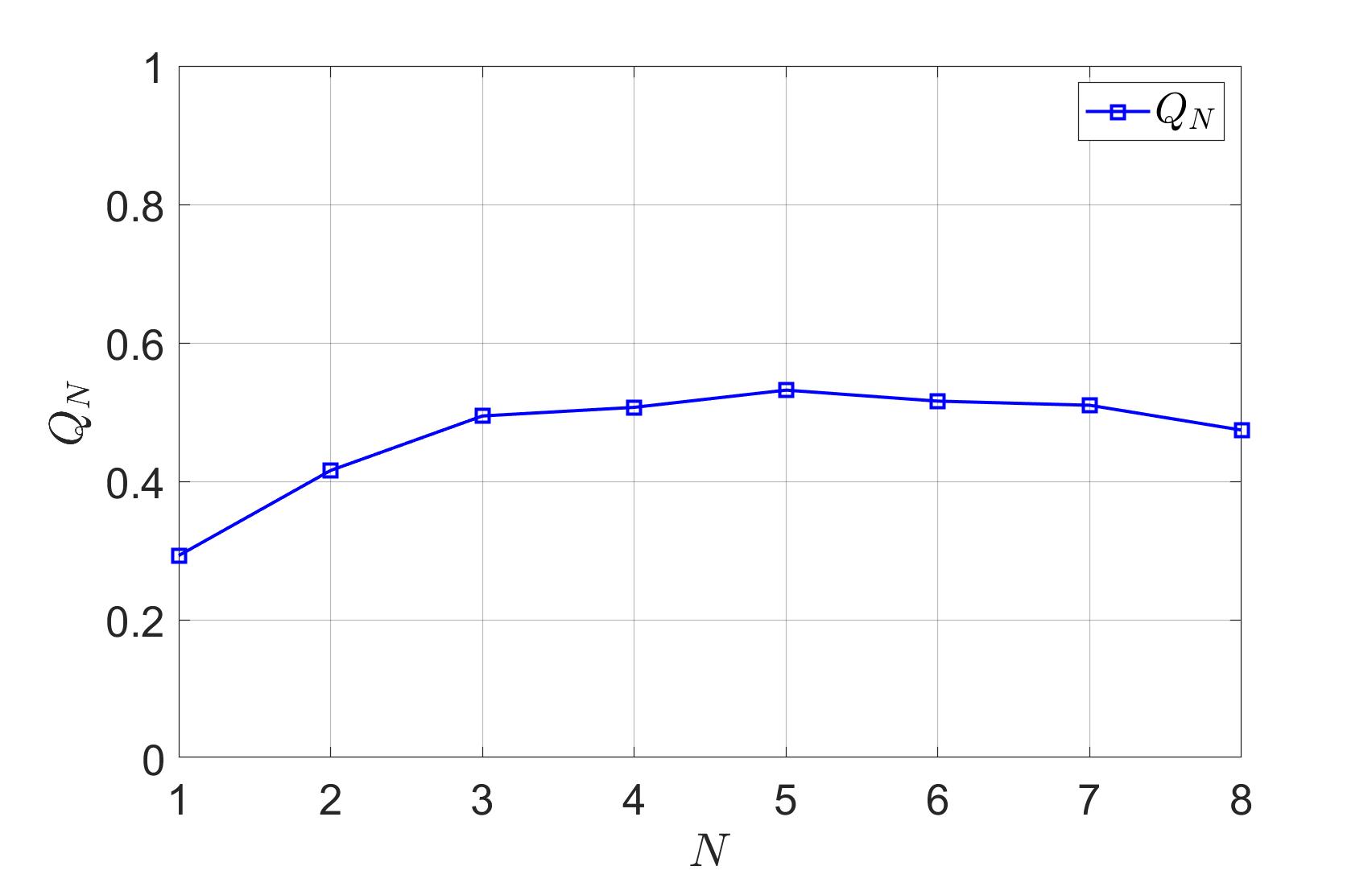}
	\caption{Experiment~\ref{exp:oscillation}: Plot of the energy contraction factor $Q_N$.}
	\label{fig:oscillationquotient}
\end{figure}

\section{Conclusions}
\label{sec:concl}
In this work we have introduced a new analysis and computational procedure for  semilinear diffusion-reaction boundary value problems. The focus of our approach is on a variational framework that induces two very natural components: an energy-minimisation iterative linearisation scheme and a local energy-driven adaptive mesh refinement strategy. These two practical tools are combined in an intertwined manner for the purpose of building an effective numerical solution algorithm. Indeed, our computational experiments clearly demonstrate, even for challenging singularly perturbed examples, that nonlinear reaction terms as well as possible singular layers in the underlying solutions are simultaneously resolved at an empirically optimal rate. The theoretical analysis of the energy-decreasing adaptive finite element space enrichment procedure, in particular, the derivation of sufficient conditions for the bound~\eqref{eq:disq} to hold, is subject to future research; this clearly constitutes, however, a topic on its own and ranges beyond the scope of the present paper.

\bibliographystyle{amsplain}
\bibliography{references}

\providecommand{\bysame}{\leavevmode\hbox to3em{\hrulefill}\thinspace}
\providecommand{\MR}{\relax\ifhmode\unskip\space\fi MR }
\providecommand{\MRhref}[2]{%
  \href{http://www.ams.org/mathscinet-getitem?mr=#1}{#2}
}
\providecommand{\href}[2]{#2}
\begin{thebibliography}{10}

\bibitem{AmreinWihler:15}
M.~Amrein and T.~P. Wihler, \emph{Fully adaptive {N}ewton-{G}alerkin methods
  for semilinear elliptic partial differential equations}, SIAM J. Sci. Comput.
  \textbf{37} (2015), no.~4, A1637--A1657.

\bibitem{AmreinWihler:17PTC}
M.~Amrein and T.~P. Wihler, \emph{Adaptive
  pseudo-transient-continuation-{G}alerkin methods for semilinear elliptic
  partial differential equations}, Numer. Methods Partial Differential
  Equations \textbf{33} (2017), no.~6, 2005--2022.

\bibitem{BernardiDakroubMansourSayah:15}
C.~Bernardi, J.~Dakroub, G.~Mansour, and T.~Sayah, \emph{{A posteriori analysis
  of iterative algorithms for a nonlinear problem}}, J. Sci. Comput.
  \textbf{65} (2015), no.~2, 672--697.

\bibitem{cencini2003reaction}
M.~Cencini, C.~Lopez, and D.~Vergni, \emph{Reaction-diffusion systems: front
  propagation and spatial structures}, The Kolmogorov legacy in physics,
  Springer, 2003, pp.~187--210.

\bibitem{CongreveWihler:17}
S.~Congreve and T.~P. Wihler, \emph{Iterative {G}alerkin discretizations for
  strongly monotone problems}, Journal of Computational and Applied Mathematics
  \textbf{311} (2017), 457--472.

\bibitem{cuevas2014sine}
J.~Cuevas-Maraver, P.~G. Kevrekidis, and F.~Williams, \emph{The {S}ine-{G}ordon
  {M}odel and {I}ts {A}pplications: From {P}endula and {J}osephson {J}unctions
  to {G}ravity and}, High-energy Physics \textbf{10} (2014), 263.

\bibitem{Deuflhard:04}
P.~Deuflhard, \emph{{Newton methods for nonlinear problems}}, Springer {Series}
  in {Computational} {Mathematics}, vol.~35, Springer-Verlag, Berlin, 2004,
  Affine invariance and adaptive algorithms.

\bibitem{El-AlaouiErnVohralik:11}
L.~El~Alaoui, A.~Ern, and M.~Vohral{\'\i}k, \emph{{Guaranteed and robust a
  posteriori error estimates and balancing discretization and linearization
  errors for monotone nonlinear problems}}, Comput. Methods Appl. Mech. Engrg.
  \textbf{200} (2011), no.~37-40, 2782--2795.

\bibitem{ErnVohralik:13}
A.~Ern and M.~Vohral{\'\i}k, \emph{{Adaptive inexact {Newton} methods with a
  posteriori stopping criteria for nonlinear diffusion {PDEs}}}, SIAM J. Sci.
  Comput. \textbf{35} (2013), no.~4, A1761--A1791.

\bibitem{evans:98}
L.~C. Evans, \emph{Partial differential equations}, Graduate Studies in
  Mathematics, vol.~19, American Mathematical Society, Providence, RI, 1998.

\bibitem{GarauMorinZuppa:11}
E.~M. Garau, P.~Morin, and C.~Zuppa, \emph{{Convergence of an adaptive {Ka{\v
  c}anov} {FEM} for quasi-linear problems}}, Appl. Numer. Math. \textbf{61}
  (2011), no.~4, 512--529.

\bibitem{HeidPraetoriusWihler:2021}
P.~Heid, D.~Praetorius, and T.~P. Wihler, \emph{Energy contraction and optimal
  convergence of adaptive iterative linearized finite element methods},
  Computational Methods in Applied Mathematics \textbf{21} (2021), no.~2,
  407--422.

\bibitem{HeidStammWihler:19}
P.~Heid, B.~Stamm, and T.~P. Wihler, \emph{Gradient flow finite element
  discretizations with energy-based adaptivity for the {G}ross-{P}itaevskii
  equation}, J. Comput. Phys. \textbf{436} (2021), 110165.

\bibitem{HeidWihler:19v2}
P.~Heid and T.~P. Wihler, \emph{Adaptive iterative linearization {G}alerkin
  methods for nonlinear problems}, Math. Comp. \textbf{89} (2020), no.~326,
  2707--2734.

\bibitem{HeidWihler2:19v1}
\bysame, \emph{On the convergence of adaptive iterative linearized {G}alerkin
  methods}, Calcolo \textbf{57} (2020), no.~3, 24. \MR{4131951}

\bibitem{HeidWihler:22}
\bysame, \emph{A modified {K}a\v{c}anov iteration scheme with application to
  quasilinear diffusion models}, in press in ESAIM: Mathematical Modelling and
  Numerical Analysis (2022).

\bibitem{HeidWihler:20}
P.~Heid and T.P. Wihler, \emph{Adaptive local minimax {G}alerkin methods for
  variational problems}, SIAM J. Sci. Comput. \textbf{43} (2021), no.~2,
  A1108--A1133.

\bibitem{HoustonWihler:18}
P.~Houston and T.~P. Wihler, \emph{An $hp$-adaptive
  {N}ewton-discontinuous-galerkin finite element approach for semilinear
  elliptic boundary value problems}, Math. Comp. \textbf{89} (2020),
  2707--2734.

\bibitem{McKenna:86}
C.~U. Huy, P.~J. McKenna, and W.~Walter, \emph{Finite difference approximations
  to the {D}irichlet problem for elliptic systems}, Numer. Math. \textbf{49}
  (1986), no.~2-3, 227--237.

\bibitem{kapila1980reactive}
A.~K. Kapila, \emph{Reactive-diffusive system with {A}rrhenius kinetics:
  dynamics of ignition}, SIAM Journal on Applied Mathematics \textbf{39}
  (1980), no.~1, 21--36.

\bibitem{MelenkWihler:15}
J.~M. Melenk and T.~P. Wihler, \emph{A posteriori error analysis of $hp$-{FEM}
  for singularly perturbed problems}, 2015.

\bibitem{Mitchell:91}
W.~F. Mitchell, \emph{Adaptive refinement for arbitrary finite-element spaces
  with hierarchical bases}, Journal of computational and applied mathematics
  \textbf{36} (1991), no.~1, 65--78.

\bibitem{Pao:2003}
C.~V. Pao, \emph{Accelerated monotone iterations for numerical solutions of
  nonlinear elliptic boundary value problems}, Comput. Math. Appl. \textbf{46}
  (2003), no.~10-11, 1535--1544.

\bibitem{R86}
P.~H. Rabinowitz, \emph{Minimax methods in critical point theory with
  applications to differential equations}, CBMS Regional Conference Series in
  Mathematics, vol.~65, Published for the Conference Board of the Mathematical
  Sciences, Washington, DC; by the American Mathematical Society, Providence,
  RI, 1986.

\bibitem{Verfurth:13}
R.~Verf\"urth, \emph{A posteriori error estimation techniques for finite
  element methods}, Numerical Mathematics and Scientific Computation, Oxford
  University Press, Oxford, 2013.

\bibitem{Zeidler:90}
E.~Zeidler, \emph{Nonlinear functional analysis and its applications.
  {II}/{B}}, Springer-Verlag, New York, 1990.

\end{thebibliography}
\end{document}